\theoremstyle{plain}
\newtheorem{thm}{Theorem}[section]
\newtheorem{cor}[thm]{Corollary}
\newtheorem{pro}[thm]{Proposition}
\newtheorem{lem}[thm]{Lemma}
\newtheorem{rem}[thm]{Remark}
\newtheorem*{ex*}{Example}
\newcommand{\N}{\mathbb{N}}
\newcommand{\C}{\mathbb{C}}
\def\P{\mathcal P}
\def\H{\mathbb H}
\def\m{\mu} 						
\def\J{\mathcal J}
\def\l{\lambda}
\def\a{\alpha}
\def\b{\beta}
\def\ab{\alpha,\beta}
\def\t{\theta}
\def\vp{\varphi}
\def\st{\sin \frac{\theta}{2}}
\def\svp{\sin \frac{\varphi}{2}}
\def\ct{\cos \frac{\theta}{2}}
\def\cvp{\cos \frac{\varphi}{2}}
\def\Puv{\Psi^{\alpha,\beta}(t,\theta,\varphi,u,v)}
\def\Peeuv{\Psi^{\alpha,\beta}_{E}(t,\theta,\varphi,u,v)}
\def\Peeu{\Psi^{\alpha,\beta}_{E}(t,\theta,\varphi,u,1)}
\def\Peev{\Psi^{\alpha,\beta}_{E}(t,\theta,\varphi,1,v)}
\def\Pee{\Psi^{\alpha,\beta}_{E}(t,\theta,\varphi,1,1)}
\def\pia{d\Pi_{\alpha}(u)}
\def\pib{d\Pi_{\beta}(v)}
\def\piu{d\Pi_{-1\slash 2}(u)}
\def\piv{d\Pi_{-1\slash 2}(v)}
\def\oa{\Pi_{\alpha}(u)\,du}
\def\q{\mathfrak{q}}
\DeclareMathOperator{\support}{supp}
\def\Plam{\Psi^{\lambda}(t,\mathfrak{q})}
\def\Ptilde{\widetilde{\Psi}^{\lambda}(t,\mathfrak{q})}
\def\Ptildej{\widetilde{\Psi}^{\lambda - 1 + \sum_i j_i}(t,\mathfrak{q})}
\def\piK{d\Pi_{\alpha, K}(u)}
\def\piR{d\Pi_{\beta, R}(v)}
\def\abpiK{\Pi_{\alpha, K}}
\def\abpiR{\Pi_{\beta, R}}
\def\lam{\lambda}
\def\evenM{\sum_{\text{even} \, i} j_i}
\def\oddM{\sum_{\text{odd} \, i} j_i}
\def\evenN{\sum_{\text{even} \, i} j_i}
\def\oddN{\sum_{\text{odd} \, i} j_i}
\title[Analysis in the Jacobi setting]{Analysis related to all admissible type parameters\\ 
in the Jacobi setting}
\author[A. Nowak]{Adam Nowak}
\address{Adam Nowak, \newline
			Institute of Mathematics,
		Polish Academy of Sciences, \newline
      \'Sniadeckich 8,
      00--956 Warszawa, Poland    
      }
\email{anowak@impan.pl}
\author[P. Sj\"ogren]{Peter Sj\"ogren}
\address{Peter Sj\"ogren, \newline
			Mathematical Sciences, University of Gothenburg \newline
Mathematical Sciences, Chalmers University of Technology \newline
SE-412 96 G\"oteborg, Sweden 
      }
\email{peters@chalmers.se}
\author[T.Z. Szarek]{Tomasz Z. Szarek}
\address{Tomasz Z. Szarek,     \newline
			Institute of Mathematics,
		Polish Academy of Sciences, \newline
      \'Sniadeckich 8,
      00--956 Warszawa, Poland
      }
\email{szarektomaszz@gmail.com}
\begin{document}

\begin{abstract}
We derive an integral representation for the Jacobi-Poisson kernel valid for all admissible
type parameters $\ab$ in the context of Jacobi expansions. This enables us to develop a technique
for proving standard estimates in the Jacobi setting, which works for all possible $\a$ and $\b$.
As a consequence, we can prove that several fundamental operators in the harmonic analysis of
Jacobi expansions are (vector-valued) Calder\'on-Zygmund operators in the sense of the associated space 
of homogeneous type, and hence their mapping properties follow from the general theory.
The new Jacobi-Poisson kernel representation also leads to sharp estimates of this
kernel. The paper generalizes methods and results existing in the literature, but valid
or justified only for a restricted range of $\a$ and $\b$.
\end{abstract}

\maketitle

\footnotetext{
\emph{\noindent 2010 Mathematics Subject Classification:} primary 42C05; secondary 42C10\\
\emph{Key words and phrases:} Jacobi polynomial, Jacobi expansion, Jacobi operator,
			Jacobi-Poisson kernel, sharp estimate,
			Jacobi-Poisson semigroup, Riesz transform, multiplier, imaginary power, fractional integral,
			maximal operator, square function, Calder\'on-Zygmund operator.
		
		The first author was supported in part by MNiSW Grant N N201 417839.
}

\section{Introduction} \label{sec:intro}

This paper is a continuation and completion of the research performed recently in \cite{NoSj}
by the first and second authors. Given parameters $\ab > -1$, consider the Jacobi
differential operator
$$
\J^{\ab} = - \frac{d^2}{d\t^2} - \frac{\a-\b+(\a+\b+1)\cos\t}{\sin \t}
	\frac{d}{d\t} + \Big( \frac{\a+\b+1}{2}\Big)^2
$$
on the interval $[0,\pi]$ equipped with the (doubling) measure
$$
d\m_{\ab}(\t) = \Big( \sin\frac{\t}2 \Big)^{2\a+1} 
	\Big( \cos\frac{\t}2\Big)^{2\b+1} \,d\t.
$$
This operator, acting initially on $C_c^2(0,\pi)$, has a natural self-adjoint extension
in $L^2(d\m_{\ab})$, whose spectral decomposition is discrete and given by the classical
Jacobi polynomials. Various aspects of harmonic analysis related to the Jacobi
setting has been studied in the literature. This line of research goes back to
the seminal work of B. Muckenhoupt and E.M. Stein \cite{MuS}, in which the ultraspherical
case ($\alpha=\beta$) was investigated. Later several other authors contributed to the subject,
see \cite[Section 1]{NoSj} and also the end of \cite[Section 2]{NoSj} for a detailed account
and references.

The main result of \cite{NoSj} is restricted to $\ab \ge -1/2$. It states that several fundamental
operators in the harmonic analysis of Jacobi expansions, including Riesz transforms, imaginary
powers of the Jacobi operator, the Jacobi-Poisson semigroup maximal operator and Littlewood-Paley-Stein
type square functions, are (vector-valued) Calder\'on-Zygmund operators in the sense of the space
of homogeneous type $([0,\pi],d\m_{\ab},|\cdot|)$. Here $|\cdot|$ stands for the ordinary distance.
Consequently, the mapping properties of these operators follow from the general theory.
The proofs in \cite{NoSj} rely on an integral formula for the Jacobi-Poisson kernel,  derived
in \cite{NoSj}
from a product formula for Jacobi polynomials due to Dijksma and Koornwinder \cite{DK}.
Unfortunately, the latter result is not valid if either $\a < -1/2$ or $\b < -1/2$, and this limitation
is inherited by the abovementioned Jacobi-Poisson kernel representation. Thus the technique
of proving estimates for kernels defined via the Jacobi-Poisson kernel developed in \cite{NoSj}
is designed for the case $\ab \ge -1/2$. The object of the present paper is to eliminate this restriction 
in the parameter values, which will require some new techniques.

Our method starts with the deduction of a suitable
integral representation of the Jacobi-Poisson kernel, valid for all $\ab > -1$, see
Proposition \ref{prop:JP}. This extended Jacobi-Poisson kernel formula contains as
a special case that obtained in \cite[Proposition 4.1]{NoSj} for $\ab \ge -1/2$ and is more
involved if either $\a$ or $\b$ is less than $-1/2$. 
Then we establish a suitable generalization to all $\ab >-1$ of the strategy employed in \cite{NoSj}
to prove standard estimates (see \eqref{gr}-\eqref{sm2} below) for kernels expressible via the 
Jacobi-Poisson kernel. To achieve this, some essentially new arguments are required, and the 
method allows a unified treatment of all parameter values $\ab > -1$.

As an application of these techniques, we prove that the maximal operator of the Jacobi-Poisson
semigroup, the Riesz-Jacobi transforms, Littlewood-Paley-Stein type square functions and multipliers of
Laplace and Laplace-Stieltjes transform type are scalar-valued or 
vector-valued Calder\'on-Zygmund operators in the sense of the space $([0,\pi],d\m_{\ab},|\cdot|)$; 
see Theorem \ref{thm:main}.
This extends to all $\ab > -1$ several results for $\ab \ge -1/2$ obtained in \cite{NoSj}
and earlier papers. Moreover, here we treat Laplace transform type multipliers, which were considered
in \cite{NoSj} only implicitly (see the comment closing \cite[Section 2]{NoSj}), and also Laplace-Stieltjes
transform type multipliers not studied earlier in the Jacobi context. These multiplier operators include, as special cases, the imaginary powers of the Jacobi operator and the related fractional
integrals (provided that $\a+\b+1\neq 0$; otherwise the bottom eigenvalue of the Jacobi operator is
$0$ and these objects are not well defined). 
Noteworthy, our technique is well suited to a wider variety of operators, including more
general forms of $g$-functions and Lusin area type integrals. In a
similar spirit, analogous problems concerning analysis for ``low'' values of type parameters were recently
investigated in the Laguerre \cite{NoSz} and Bessel \cite{CaSz} settings.

The utility of the Jacobi-Poisson kernel representation derived in Proposition \ref{prop:JP}
is also strongly supported by the fact that it makes it possible to describe the exact behavior of the kernel;
see Theorem \ref{thm:JPsharp}. The sharp estimates we prove extend to all $\ab >-1$ the bounds found recently by Nowak and Sj\"ogren \cite[Theorem 5.2]{NoSj2} under the restriction $\ab \ge -1/2$.

We remark that Wr\'obel \cite[Corollary 4.3]{Wr} recently 
proved a multiplier result for Jacobi expansions, which
is related to our multiplier results in Section \ref{sec:CZ}.
In his proof, Wr\'obel makes use of the integral representation for the Jacobi-Poisson kernel
derived in \cite[Proposition 4.1]{NoSj} (which is restricted to $\ab \ge -1/2$); see
\cite[Section 4]{Wr}. The Jacobi-Poisson kernel formula obtained in Proposition \ref{prop:JP}
should thus make it possible to extend Wr\'obel's result to a wider range of $\ab$ 
than that admitted in \cite[Corollary 4.3]{Wr};
this, however, remains to be investigated.

The paper is organized as follows. In Section \ref{sec:JP} we derive an integral representation
of the Jacobi-Poisson kernel valid for all $\ab > -1$. Section \ref{sec:prep} contains
various facts and preparatory results needed for kernel estimates. In Section \ref{sec:kerest}
we prove standard estimates for kernels associated with the operators mentioned above. 
This leads to our main results in Section
\ref{sec:CZ}, saying that the operators in question can be interpreted as Calder\'on-Zygmund
operators and giving, as a consequence, their $L^p$ mapping properties. 
Finally, Section~\ref{sec:JPsharp} is devoted to sharp estimates of the Jacobi-Poisson kernel.

Throughout the paper we use a fairly standard notation with essentially all symbols referring
to the space of homogeneous type $([0,\pi],d\m_{\ab},|\cdot|)$. Since the distance in this space is the
Euclidean one, the ball denoted $B(\theta,r)$ is simply the interval $(\theta-r,\theta+r)\cap [0,\pi]$. 
When writing estimates, we will frequently use the notation $X \lesssim Y$
to indicate that $X \le CY$ with a positive constant $C$ independent of significant quantities.
We shall write $X \simeq Y$ when simultaneously $X \lesssim Y$ and $Y \lesssim X$.

\section{Jacobi-Poisson kernel} \label{sec:JP}

Let $\ab > -1$. The Jacobi-Poisson kernel is given by (see \cite[Section 2]{NoSj})
\begin{equation*} 
H_t^{\ab}(\t,\vp) = \sum_{n=0}^{\infty} e^{-t\left|n+\frac{\a+\b+1}{2}\right|}
	\P_n^{\ab}(\t)\P_n^{\ab}(\vp);
\end{equation*}
here $t>0$ and $\t,\vp \in [0,\pi]$, and $\P_n^{\ab}$ are the classical Jacobi trigonometric
polynomials, normalized in $L^2(d\m_{\ab})$.
Notice that the fraction $\frac{\a+\b+1}{2}$ may be negative.
Defining the auxiliary kernel
$$
\H_t^{\ab}(\t,\vp) := \sum_{n=0}^{\infty} e^{-t\left(n+\frac{\a+\b+1}{2}\right)}
	\P_n^{\ab}(\t)\P_n^{\ab}(\vp),
$$
the Jacobi-Poisson kernel can be written as
\begin{equation}\label{JPH}
H_t^{\ab}(\t,\vp) = \H_t^{\ab}(\t,\vp) + \chi_{\{\a+\b < -1\}} \, 2^{\a+\b+2} c_{\ab}\, \sinh\bigg( 
	\frac{\a+\b+1}{2} \, t \bigg),
\end{equation}
where
$$
c_{\ab} := \frac{\Gamma(\a+\b+2)}{2^{\a+\b+1}\Gamma(\a+1)\Gamma(\b+1)}.
$$
The last term in \eqref{JPH} is nonzero when $\a+\b < -1$.
As we shall see later,
there are important cancellations between the two terms in \eqref{JPH} for large $t$.

The kernel $\H_t^{\ab}(\t,\vp)$ can be computed explicitly by means of Bailey's formula, see
\cite[p.\,385--387]{AAR}. More precisely, we have
\begin{align}
\H_t^{\ab}(\t,\vp) & = c_{\ab}\,
	\frac{\sinh\frac{t}{2}}{\big(\cosh\frac{t}{2}\big)^{\a+\b+2}} \nonumber \\ & \quad \times
	  F_4\Bigg( \frac{\a+\b+2}{2},\frac{\a+\b+3}{2}; \a+1, \b+1; 
	\bigg(\frac{\st \svp}{\cosh\frac{t}2}\bigg)^2,
	\bigg(\frac{\ct \cvp}{\cosh\frac{t}2}\bigg)^2\Bigg), \label{HF4}
\end{align}
for $t>0$ and $\t,\vp \in [0,\pi]$. Here $F_4$ is Appel's hypergeometric function of two variables
defined by the series
$$
F_4(a_1,a_2; b_1,b_2; x,y) = \sum_{m,n=0}^{\infty} 
	\frac{(a_1)_{m+n} (a_2)_{m+n}}{(b_1)_m (b_2)_n m! n!} \, x^m y^n,
$$
where $(a)_n$ means the Pochhammer symbol, $(a)_n = a(a+1)\cdot \ldots \cdot (a+n-1)$ for $n \ge 1$
and $(a)_0 = 1$. This double power series is known to converge absolutely when 
$\sqrt{|x|}+ \sqrt{|y|} < 1$, cf. \cite[Chapter V, Section 5.7.2]{Bat}.
From this expression, the positivity of $\H_t^{\ab}(\t,\vp)$ can easily be
seen. Moreover, \eqref{HF4} provides a holomorphic extension of $\H_t^{\ab}(\t,\vp)$ as a function
of the parameters $\ab > -1$ to the region $\{(\ab) \in \C^2 : \Re \a, \Re \b > -1\}$.
Indeed, with $t>0$ and $\t,\vp \in [0,\pi]$ fixed, 
the hypergeometric series in \eqref{HF4} is a sum of holomorphic functions of $(\ab)$
converging locally uniformly in the region in question (the latter fact can be justified
by means of elementary estimates for the Pochhammer symbol). However, the formula \eqref{HF4} does not seem to be
convenient from the point of view of kernel estimates. Thus we need a more suitable representation.

In \cite[Section 4]{NoSj} the first and second authors derived the following integral representation, valid for $\a,\b \ge -1/2$ (notice that under this restriction $H_t^{\ab}(\t,\vp)$ coincides
with $\H_t^{\ab}(\t,\vp)$):
\begin{equation} \label{HI}
\H_t^{\ab}(\t,\vp) = c_{\ab} \,\sinh\frac{t}2 \iint \frac{\pia \, \pib}
	{(\cosh\frac{t}2-1 + q(\t,\vp,u,v))^{\a+\b+2}}, \qquad t>0, \quad \t,\vp \in [0,\pi],
\end{equation}
where
$$
q(\t,\vp,u,v) = 1 - u \st \svp - v \ct \cvp,
$$
and the measure $d\Pi_{\a}$ is defined in the following way. For $\a > -1/2$, we let
\begin{equation} \label{Pimes}
\Pi_{\a}(u) := \frac{\Gamma(\a+1)}{\sqrt{\pi}\Gamma(\a+1/2)} \int_0^u (1-w^2)^{\a-1/2}\, dw,
\end{equation}
which is an odd function in $-1<u<1$. Then $d\Pi_{\a}$ is a probability measure in $[-1,1]$.
As $\a \to -1/2$, one finds that $d\Pi_{\a}$ converges weakly to the measure 
$d\Pi_{-1/2} := \frac{1}{2}(\delta_{-1}+\delta_{1})$, where $\delta_{\pm 1}$
denotes a point mass at $\pm 1$.

Now we observe that \eqref{Pimes} can be extended to all complex $\a \neq -1/2$ with $\Re \a > -1$.
Then the (distribution) derivative 
$$
\pia = \frac{\Gamma(\a+1)}{\sqrt{\pi}\Gamma(\a+1/2)} \, \big(1-u^2\big)^{\a-1/2}\, du
$$
is a local complex measure in $(-1,1)$. For $\a \in (-1,-1/2)$ real, its density 
is negative, even and not integrable in $(-1,1)$. If $\phi$ is a continuous function in $(-1,1)$ and
$\phi(u) = \mathcal{O}(1-u)$ as $u \to 1$, then the integral $I(\a) = \int_0^1\phi(u)\, \pia$ is well
defined. As a function of $\a$, this integral is analytic in $\{\a : \Re \a > -1, \a\neq -1/2\}$.
Since $|I(\a)| \lesssim |\a+1/2|\int_0^1 (1-u^2)^{\Re\a+1/2}\, du \to 0$ as $\a \to -1/2$, we see that
$I(\a)$ is actually analytic in $\{\a : \Re\a > -1\}$ and $I(-1/2)=0$. More generally, if $\phi_{\ab}(u)$
is continuous in $(u,\ab)$ and analytic in $(\ab)$ for $-1<u<1$ and $\Re\a, \Re\b >-1$, and
$\phi_{\ab}(u) = \mathcal{O}(1-u)$ locally uniformly in $(\ab)$, then
$I(\ab) = \int_0^1 \phi_{\ab}(u)\, d\Pi_{\a}(u)$ will be analytic in $(\ab)$ in $\Re\a, \Re\b >-1$. 
Under analogous assumptions, this also extends to functions $\phi_{\ab}(u,v)$ and the double integral
$I(\ab) = \iint_{(0,1)^2} \phi_{\ab}(u,v) \, d\Pi_{\a}(u)\,d\Pi_{\b}(v)$,
if one assumes $\phi_{\ab}(u,v) = \mathcal{O}((1-u)(1-v))$ locally uniformly in $\a$ and $\b$. 

The measures $d\Pi_{\a}$ will now be used to extend the representation \eqref{HI} to the range $\ab > -1$.
Denote
\begin{equation}\label{def:Psi}
\Puv :=   \frac{c_{\ab} \,\sinh\frac{t}2}{(\cosh\frac{t}2-1 + q(\t,\vp,u,v))^{\a+\b+2}}.
\end{equation}
Taking the even parts of $\Puv$ in $u$ and $v$, we also define
$$
\Peeuv := \frac{1}4 \sum_{\xi,\eta = \pm 1} \Psi^{\ab}(t,\t,\vp,\xi u, \eta v).
$$
Notice that by \eqref{HI} and symmetry reasons, for $\ab \ge -1/2$ we have
\begin{align}
\H_t^{\ab}(\t,\vp)  =  4 \iint_{(0,1]^2} \Peeuv \, \pia \, \pib. \label{Hsym}
\end{align}
We can now state a general integral representation of $\H_t^{\ab}(\t,\vp)$.
\begin{thm}\label{thm:JPK}
For all $\a,\b > -1$, $t>0$ and $\t,\vp \in [0,\pi]$,
\begin{align}\label{Hgen}
\H_{t}^{\ab}(\t,\vp)  
& = 
		4 \iint_{ (0,1]^2 } \Big( \Peeuv - \Peeu - \Peev \\ & \qquad \qquad 
		+ \Pee \Big) \, \pia\, \pib \nonumber \\
& \quad
		+ 2 \int_{ (0,1] } \Big( \Peeu -\Pee \Big) \, \pia \nonumber \\\nonumber
& \quad
		+ 2 \int_{ (0,1] } \Big( \Peev -\Pee \Big) \, \pib \\
&		\quad + \Pee. \nonumber
\end{align}
\end{thm}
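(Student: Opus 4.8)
The plan is to obtain \eqref{Hgen} from \eqref{Hsym} by analytic continuation in the parameter pair $(\ab)$. For $\ab\ge-1/2$ the identity \eqref{Hgen} is just an algebraic rearrangement of \eqref{Hsym}: I would split the integrand by the elementary decomposition into a second-order mixed difference, two first-order differences and a constant,
\begin{align*}
\Peeuv&=\big(\Peeuv-\Peeu-\Peev+\Pee\big)\\
&\quad+\big(\Peeu-\Pee\big)+\big(\Peev-\Pee\big)+\Pee,
\end{align*}
substitute into \eqref{Hsym}, and integrate each piece separately. Since $d\Pi_\a$ and $d\Pi_\b$ are symmetric probability measures on $[-1,1]$, each carries mass $1/2$ on $(0,1]$; hence the two one-variable pieces acquire a factor $4\cdot\tfrac12=2$ and the constant piece a factor $4\cdot\tfrac14$, producing $\Pee$, which is exactly \eqref{Hgen} for $\ab\ge-1/2$.

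It then remains to check that both sides of \eqref{Hgen}, viewed as functions of $(\ab)$, are holomorphic in the region $\{\Re\a>-1,\ \Re\b>-1\}$. For the left-hand side this is the holomorphic extension already noted after \eqref{HF4}. For the right-hand side, the crucial elementary point is that the denominator in \eqref{def:Psi} stays bounded away from zero: by Cauchy--Schwarz $\st\svp+\ct\cvp\le1$, so $q(\t,\vp,u,v)\ge0$ for $|u|,|v|\le1$, whence $\cosh\frac{t}{2}-1+q(\t,\vp,u,v)\ge\cosh\frac{t}{2}-1>0$ whenever $t>0$. Consequently $\Puv$ extends to a function that is smooth in $(u,v)$ on a neighbourhood of $[-1,1]^2$ and holomorphic in $(\ab)$ on $\{\Re\a>-1,\ \Re\b>-1\}$ — the $\Gamma$-factors in $c_{\ab}$ have no poles there since $\Re(\a+\b+2)>0$, and a positive base raised to a complex power is entire in that exponent.

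From the smoothness in $(u,v)$ I would read off, by first- and second-order Taylor expansion at $u=1$ and $v=1$, the bounds $\Peeu-\Pee=\mathcal O(1-u)$, $\Peev-\Pee=\mathcal O(1-v)$ and $\Peeuv-\Peeu-\Peev+\Pee=\mathcal O((1-u)(1-v))$, all locally uniform in $(\ab)$. These are precisely the hypotheses of the analyticity criteria recalled before the statement, so each of the three integrals on the right of \eqref{Hgen} is holomorphic in $(\ab)$ on $\{\Re\a>-1,\ \Re\b>-1\}$, as is the last term $\Pee$; the difference structure simultaneously secures convergence, since it absorbs the non-integrable factor $(1-u^2)^{\a-1/2}$ of $d\Pi_\a$ near $u=1$ when $\a\in(-1,-1/2)$. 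As the region $\{\Re\a>-1,\ \Re\b>-1\}$ is connected and the two holomorphic sides coincide on $\{\ab\ge-1/2\}$, a set with nonempty interior in $\mathbb R^2$, the identity theorem (applied, say, successively in $\a$ and then in $\b$) gives \eqref{Hgen} for all $\ab>-1$.

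The step I expect to be the main obstacle is the uniform control of the differenced integrands near the corner $u=v=1$: establishing the $\mathcal O(1-u)$, $\mathcal O(1-v)$ and $\mathcal O((1-u)(1-v))$ estimates \emph{locally uniformly in $(\ab)$} requires tracking how the $u,v$-derivatives of $\Puv$ depend on the parameters through the exponent $\a+\b+2$ and on $t,\t,\vp$. The lower bound for the denominator makes this manageable, but it is the point demanding genuine care; the remainder is bookkeeping built on the analyticity lemmas already available.
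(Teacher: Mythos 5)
Your proposal is correct and follows essentially the same route as the paper: derive \eqref{Hgen} from \eqref{Hsym} for $\ab\ge-1/2$ by the second-order difference decomposition, verify that each integral is of the type $I(\ab)$ via the $\mathcal O(1-u)$, $\mathcal O(1-v)$ and $\mathcal O((1-u)(1-v))$ bounds (which the paper gets from the locally uniform boundedness of $\partial\Psi_E^{\ab}/\partial u$, i.e. exactly your Taylor-expansion argument backed by the positive lower bound on the denominator), and conclude by analytic continuation in $(\ab)$ over $\{\Re\a>-1,\ \Re\b>-1\}$. Your explicit accounting of the factors $4$, $2$, $1$ via the half-mass of the symmetric probability measures on $(0,1]$ correctly fills in the step the paper dismisses as "an easy consequence."
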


\begin{proof}
For $\ab\ge -1/2$, \eqref{Hgen} is an easy consequence of \eqref{Hsym}. With 
$\phi_{\ab}(u) = \Peeu -\Pee$, the second integral in \eqref{Hgen} is of the form $I(\ab)$ just described;
observe that $\phi_{\ab}(u) = \mathcal{O}(1-u)$, since the derivative $\partial\Psi_{E}^{\ab}/\partial u$
is bounded locally uniformly in $\a$ and $\b$. The third integral in \eqref{Hgen} is similar. For the
double integral, we let
$$
\phi_{\ab}(u,v) = \Peeuv - \Peeu - \Peev + \Pee,
$$
and get a double integral of type $I(\ab)$. 

The conclusion is that the right-hand side of \eqref{Hgen} is analytic in $(\ab) \in \{z : \Re z > -1\}^2$.
Theorem \ref{thm:JPK} follows, since the left-hand side is also analytic.
\end{proof}

We remark that in Theorem \ref{thm:JPK} it does not matter whether one integrates over the open
interval $(0,1)$ or over $(0,1]$, even when the measure is $d\Pi_{-1/2}$. But in the sequel, it will be
more convenient to use $(0,1]$.

Next we restate the formula of Theorem \ref{thm:JPK} in order to obtain a more suitable representation
of $\H_t^{\ab}(\t,\vp)$ for the kernel estimates in Section \ref{sec:kerest}.
Recall that for $-1<\a<-1/2$, $\Pi_{\a}(u)$ is an odd function, which is negative for $u>0$.
It can easily be verified that the density $|\Pi_{\a}(u)|$ defines a finite measure on $[-1,1]$.
In fact, we have the following.
\begin{lem} \label{lem:ximes}
Let $-1 < \a < -1/2$ be fixed. Then
\begin{equation*}
|\Pi_{\a}(u)| 
\simeq 
|u| (1 - |u|)^{\a+1/2}
\simeq
|u| \frac{d\Pi_{\a+1}(u)}{du}, \qquad u \in (-1,1).
\end{equation*}
\end{lem}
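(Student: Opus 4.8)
The plan is to reduce the lemma to elementary estimates of the integral defining $\Pi_{\a}$. Since $\Pi_{\a}$ is odd, its modulus is even, so it suffices to work with $u \in (0,1)$. Writing $C_{\a} = \frac{\Gamma(\a+1)}{\sqrt{\pi}\,\Gamma(\a+1/2)}$, which is a nonzero negative constant when $-1 < \a < -1/2$, we have $|\Pi_{\a}(u)| = |C_{\a}|\, f(u)$ with $f(u) = \int_0^u (1-w^2)^{\a-1/2}\,dw$. Thus the first equivalence in the lemma amounts to showing $f(u) \simeq u\,(1-u)^{\a+1/2}$ for $u \in (0,1)$. Granting this, the second equivalence is immediate: $\frac{d\Pi_{\a+1}(u)}{du}$ equals a positive constant times $(1-u^2)^{\a+1/2}$, and since $1+|u| \simeq 1$ on $[-1,1]$ we get $\frac{d\Pi_{\a+1}(u)}{du} \simeq (1-|u|)^{\a+1/2}$, whence $|u|\frac{d\Pi_{\a+1}(u)}{du} \simeq |u|(1-|u|)^{\a+1/2}$.

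To prove $f(u) \simeq u(1-u)^{\a+1/2}$ I would split the range of $u$. For $u \in (0,1/2]$ the integrand $(1-w^2)^{\a-1/2}$ lies between two positive constants depending only on $\a$, so $f(u) \simeq u$; since also $(1-u)^{\a+1/2} \simeq 1$ there, this gives $f(u) \simeq u \simeq u(1-u)^{\a+1/2}$. For $u \in [1/2,1)$ I would use $(1+w)^{\a-1/2} \simeq 1$ on $[1/2,1]$ to reduce to $\int_{1/2}^u (1-w)^{\a-1/2}\,dw$, and evaluate this integral explicitly. As $\a + 1/2 \in (-1/2,0)$, one gets $\int_{1/2}^u (1-w)^{\a-1/2}\,dw = \frac{1}{|\a+1/2|}\big((1-u)^{\a+1/2} - (1/2)^{\a+1/2}\big)$, a nonnegative quantity. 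Hence, with implied constants depending only on $\a$, $f(u)$ equals the fixed positive number $f(1/2)$ plus a quantity comparable to $(1-u)^{\a+1/2} - (1/2)^{\a+1/2}$.

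It remains to compare this with $(1-u)^{\a+1/2}$ on $[1/2,1)$, and this is the only slightly delicate point. For the upper bound one simply drops the nonpositive term $-(1/2)^{\a+1/2}$ and absorbs the constant $f(1/2)$ using $(1-u)^{\a+1/2} \ge (1/2)^{\a+1/2} > 0$ on $[1/2,1)$; thus $f(u) \lesssim (1-u)^{\a+1/2}$. The lower bound cannot be obtained by absorbing constants uniformly on all of $[1/2,1)$, because $\a+1/2 \in (-1/2,0)$ makes the growth of $(1-u)^{\a+1/2}$ as $u \to 1$ rather slow; I would instead fix $u_0 \in (1/2,1)$ with $(1-u_0)^{\a+1/2} \ge 2(1/2)^{\a+1/2}$ and split $[1/2,1) = [1/2,u_0] \cup [u_0,1)$. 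On $[1/2,u_0]$ the factor $(1-u)^{\a+1/2}$ is bounded above while $f(u) \ge f(1/2) > 0$, so the comparison is trivial; on $[u_0,1)$ the term $(1-u)^{\a+1/2}$ dominates $(1/2)^{\a+1/2}$, so $f(u) \gtrsim \int_{1/2}^u (1-w)^{\a-1/2}\,dw \gtrsim (1-u)^{\a+1/2}$. Combining the two subintervals gives $f(u) \simeq (1-u)^{\a+1/2}$, and since $u \simeq 1$ on $[1/2,1)$ also $f(u) \simeq u(1-u)^{\a+1/2}$; together with the estimate on $(0,1/2]$ this yields the first equivalence, and hence the lemma. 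The main obstacle is exactly the balancing of the fixed boundary contribution against the slowly diverging power $(1-u)^{\a+1/2}$ near $u = \pm 1$, resolved by the two-region split just described.
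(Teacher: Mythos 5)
Your proof is correct and follows essentially the same route as the paper, which simply reduces to the observation that $|\Pi_{\a}(u)| \simeq \int_0^u (1-w)^{\a-1/2}\,dw$ for $u\in(0,1)$ and leaves the elementary evaluation of that integral to the reader. You have merely carried out in full the computation (including the slightly delicate balancing of the constant $f(1/2)$ against the divergent power $(1-u)^{\a+1/2}$ near $u=1$) that the paper's one-line proof implicitly assumes.
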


\begin{proof}
These three quantities are even in $u$, and we need consider only
$u\in (0,1)$. It is enough to observe that then $|\Pi_{\a}(u)| \simeq \int_0^u (1-w)^{\alpha-1/2} \, dw$.
\end{proof}

\begin{pro} \label{prop:JP}
Let $t>0$ and $\t,\vp \in [0,\pi]$.
\begin{itemize}
\item[(i)] 
If $\ab \ge -1/2$, then
$$
\H_t^{\ab}(\t,\vp) = \iint \Puv \, \pia \, \pib.
$$
\item[(ii)] 
If $-1<\a<-1/2 \le \b$, then
\begin{align*}
\H_t^{\ab}(\t,\vp) & = \iint \Big\{-\partial_u \Puv \, \oa \, \pib \\
	& \qquad \quad + \Puv \, \piu \, \pib\Big\}.
\end{align*}
\item[(iii)] If $-1 < \b < -1/2 \le \a$, then
\begin{align*}
\H_t^{\ab}(\t,\vp) & = \iint \Big\{-\partial_v \Puv \, \pia \, \Pi_{\b}(v)\, dv \\
	& \qquad \quad + \Puv \, \pia\, \piv\Big\}.
\end{align*}
\item[(iv)] If $-1 < \a, \b < -1/2$, then
\begin{align*}
\H_t^{\ab}(\t,\vp) = & \iint \Big\{\partial_{u} \partial_{v} \Puv \, \oa \, \Pi_{\b}(v)\, dv \\  
& \qquad - \partial_{u}  \Puv \, \oa \, \piv\\ 
& \qquad - \partial_{v}  \Puv \, \piu \, \Pi_{\b}(v)\, dv\\
& \qquad + \Puv \, \piu \, \piv \Big\}.
\end{align*}
\end{itemize}
\end{pro}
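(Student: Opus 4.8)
The plan is to obtain (ii)--(iv) from formula \eqref{Hgen} of Theorem~\ref{thm:JPK} by integrating by parts in $u$ and/or $v$ and then undoing the even-part symmetrizations; Lemma~\ref{lem:ximes} is exactly what makes this possible, since it tells us that after the integration by parts the non-integrable density $\pia$ has been replaced by the finite density $\Pi_\a(u)\,du$ and that the integrated terms vanish at the endpoints $u,v=\pm1$. Part~(i) itself requires nothing: it is the representation \eqref{HI}, valid since $H_t^{\ab}=\H_t^{\ab}$ when $\ab\ge-1/2$.

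Consider (ii), so $-1<\a<-1/2\le\b$. Because $\b\ge-1/2$, every $v$-integral in \eqref{Hgen} converges absolutely over $[-1,1]$, so the $v$-subtractions may be undone; a purely algebraic rearrangement (using $\int_{(0,1]}\pib=\tfrac12$ and the fact that $\Peeu$ and $\Pee$ do not depend on $v$) collapses \eqref{Hgen} to $\H_t^{\ab}=4\iint_{(0,1]^2}\big(\Peeuv-\Peev\big)\,\pia\,\pib+2\int_{(0,1]}\Peev\,\pib$. Since $\Peeuv$ reduces to $\Peev$ at $u=1$, we have $\Peeuv-\Peev=\mathcal{O}(1-u)$; integrating by parts in $u$ in the double integral therefore produces no boundary term at $u=1$ (there $\Pi_\a(u)=\mathcal{O}((1-u)^{\a+1/2})$ by Lemma~\ref{lem:ximes}, and $\a+1/2>-1$) nor at $u=0$ (where $\Pi_\a$ vanishes), and leaves $-4\iint_{(0,1]^2}\partial_u\Peeuv\,\Pi_\a(u)\,du\,\pib$. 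Using the oddness of $\Pi_\a$ and the evenness of $\pib$ to replace the even-part expressions over $(0,1]^2$ by the corresponding $\Puv$ over $[-1,1]^2$, this becomes $-\iint\partial_u\Puv\,\oa\,\pib$, while $2\int_{(0,1]}\Peev\,\pib$ becomes $\iint\Puv\,\piu\,\pib$; this is (ii). Part (iii) is identical after interchanging $(\a,u,\t)$ with $(\b,v,\vp)$. For (iv), with $-1<\a,\b<-1/2$, no subtraction can be removed and one must use \eqref{Hgen} as it stands: the combination $\Peeuv-\Peeu-\Peev+\Pee$ vanishes both on $\{u=1\}$ and on $\{v=1\}$ and is thus $\mathcal{O}\big((1-u)(1-v)\big)$, while $\Peeu-\Pee$ and $\Peev-\Pee$ vanish at $u=1$, resp.\ at $v=1$; hence successive integrations by parts in $u$ and then in $v$ produce no boundary terms at all, only the mixed derivative $\partial_u\partial_v\Peeuv$ surviving from the double integral, and the same resymmetrization yields the four terms of (iv).

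The routine but bookkeeping-heavy part of the argument is case (iv): keeping track of which bracketed combination vanishes on which of the lines $\{u=1\}$ and $\{v=1\}$, so that every boundary term in the iterated integration by parts really cancels, and then unwinding the $\Psi_{E}$ notation to recover $\partial_u\partial_v\Puv$, $\partial_u\Puv$, $\partial_v\Puv$ and $\Puv$ over $[-1,1]^2$ against the correct measures. Alternatively, one can bypass \eqref{Hgen} entirely: integrating by parts in the formula of part (i) --- legitimate for $\ab\ge-1/2$ because $\Pi_\a(\pm1)=\pm\tfrac12$ --- produces the right-hand sides of (ii)--(iv) for $\ab\ge-1/2$, and one then lets the relevant parameter drop below $-1/2$ by analytic continuation: the left-hand side is analytic in $(\ab)$ on $\{\Re\a>-1\}\times\{\Re\b>-1\}$ by the holomorphic extension of $\H_t^{\ab}$ noted after \eqref{HF4}, and the right-hand side is analytic because Lemma~\ref{lem:ximes} makes $\Pi_\a(u)\,du$ locally uniformly integrable as $\a$ varies, so that one may differentiate under the integral sign.
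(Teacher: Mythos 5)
Your proof is correct and follows essentially the same route as the paper: reduce \eqref{Hgen} using the symmetry and total mass of the measures attached to parameters $\ge -1/2$, integrate by parts via Lemma~\ref{lem:ximes} (with the boundary terms vanishing because the bracketed combinations are $\mathcal{O}(1-u)$, resp.\ $\mathcal{O}((1-u)(1-v))$, while $\Pi_\a(0)=0$), and then unwind the symmetrization $\Psi_E^{\ab}$ to pass to integrals of $\Psi^{\ab}$ over $[-1,1]^2$. The paper carries out exactly this computation for case (ii) and leaves (iii) and (iv) to the reader, so your bookkeeping for (iv) and the remark on the alternative analytic-continuation route are consistent extras rather than deviations.
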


\begin{proof}
Item (i) is just \eqref{HI}. To prove the remaining items, we combine Theorem \ref{thm:JPK}, 
Lemma \ref{lem:ximes} and symmetries of the quantity $\Peeuv$, its derivatives in $u$ and $v$, 
and the measures involved. We give further details in case of (ii), leaving similar proofs
of (iii) and (iv) to the reader.

Assume that $-1 < \a < -1/2 \le \b$. Since $d\Pi_{\b}$ is a symmetric probability measure on
$[-1,1]$ and has no atom at $0$, formula \eqref{Hgen} reduces to
\begin{align*}
\H_t^{\ab}(\t,\vp) & = 4 \iint_{(0,1]^2} \Big( \Peeuv - \Peev \Big)\, \pia \, \pib \\
	& \quad + 2 \int_{(0,1]} \Peev \, \pib \\
	& \equiv I_1 + I_2.
\end{align*}
Then, expressing $\Psi^{\ab}_{E}$ via $\Psi^{\ab}$ and making use of the symmetry of $d\Pi_{\b}$,
we see that
\begin{align*}
I_2 & = 4 \iint_{(0,1]^2} \Peeuv \, \piu \, \pib \\
	& = \iint \Puv \, \piu \, \pib.
\end{align*}
In $I_1$ we integrate by parts in the $u$ variable, which is legitimate in view of Lemma \ref{lem:ximes}.
Observe that the integrand in $I_1$ vanishes for $u=1$ and that $\Pi_{\a}(0)=0$. We get
\begin{align*}
I_1
= -4 \iint_{(0,1]^2} \partial_u \Psi_E^{\ab}(t,\t,\vp,u,v) \, \oa \, \pib.
\end{align*}
Inserting the definition of the symmetrization $\Psi_E^{\ab}$, one easily finds that
$$
I_1 = - \iint \partial_u \Psi^{\ab}(t,\t,\vp,u,v) \, \oa \, \pib.
$$
The conclusion follows.
\end{proof}

\begin{rem}
All the representations of $\H_t^{\ab}(\t,\vp)$ contained in Proposition \ref{prop:JP}
are positive in the sense that each of the double integrals (there are one of these in \emph{(i)},
two in \emph{(ii)} and in \emph{(iii)}, and four in \emph{(iv)}) is nonnegative. 
\end{rem}

\section{Preparatory results} \label{sec:prep}
In this section we gather various technical results, altogether forming a transparent
and convenient method of proving standard estimates for kernels defined via the Jacobi-Poisson kernel.
The essence of this technique is a uniform way of handling double integrals against
products of measures of type $d\Pi_{\gamma}$ and $\Pi_{\gamma}(u)\, du$, independently of the integrands.
Then the expressions one has to estimate contain only elementary functions and are relatively simple.

The result below, which is a generalization of \cite[Lemma 4.3]{NoSj}, plays a crucial role in our method 
to prove kernel estimates. It provides a link from estimates emerging from the integral representation of
$\H_t^{\ab}(\t,\vp)$, see Proposition \ref{prop:JP}, to the standard estimates related to the space of
homogeneous type $([0,\pi], d\mu_{\ab},|\cdot|)$.
\begin{lem}\label{lem:bridge}
Let $\ab > -1$. Assume that $\xi_1,\xi_2,\kappa_1,\kappa_2 \ge 0$ are fixed and such that 
$\a+\xi_1+\kappa_1, \, \b+\xi_2+\kappa_2 \ge -1/2$. Then, uniformly in $\t,\vp \in [0,\pi]$, $\t \ne \vp$,  
\begin{align*}
	&\bigg( \st+\svp \bigg)^{2\xi_1} \bigg( \ct + \cvp \bigg)^{2\xi_2} \iint 
\frac{d\Pi_{\a+\xi_1+\kappa_1}(u) \, 
	d\Pi_{\b+\xi_2+\kappa_2}(v) }{( q(\t,\vp,u,v) )^{\a+\b+\xi_1+\xi_2+3\slash 2}}\\
	& \quad
\lesssim \frac{1}{\mu_{\ab}(B(\t,|\t-\vp|))}, \\
	&\bigg( \st+\svp \bigg)^{2\xi_1} \bigg( \ct + \cvp \bigg)^{2\xi_2} \iint 
\frac{d\Pi_{\a+\xi_1+\kappa_1}(u) \, d\Pi_{\b+\xi_2+\kappa_2}(v) }{( q(\t,\vp,u,v) )^{\a+\b+\xi_1+\xi_2+2}}\\
	& \quad
\lesssim \frac{1}{ |\t-\vp| \, \mu_{\ab}(B(\t,|\t-\vp|))}.
\end{align*}
\end{lem}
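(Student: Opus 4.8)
The plan is as follows. Throughout, write $r=|\t-\vp|$. First I would record the geometric comparisons that are used everywhere. Since $\st\simeq\t$ and $\ct\simeq\pi-\t$ on $[0,\pi]$, elementary manipulations (using $|\st-\svp|\lesssim r$ and $|\ct-\cvp|\lesssim r$) give
\[
\st+\svp\simeq\st+r,\qquad \ct+\cvp\simeq\ct+r,\qquad 1-\cos\tfrac{\t-\vp}{2}\simeq r^{2},
\]
as well as the identity
\[
q(\t,\vp,u,v)=(1-u)\st\svp+(1-v)\ct\cvp+\bigl(1-\cos\tfrac{\t-\vp}{2}\bigr),
\]
whence $q(\t,\vp,u,v)\gtrsim r^{2}$ on the whole range of integration, together with $\st\svp+r^{2}\simeq(\st+r)^{2}$ and $\ct\cvp+r^{2}\simeq(\ct+r)^{2}$. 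I also use the standard ball estimate $\mu_{\ab}(B(\t,r))\simeq r(\st+r)^{2\a+1}(\ct+r)^{2\b+1}$. Since $q\gtrsim r^{2}$, the integrand in the second estimate is $\lesssim r^{-1}$ times that in the first (same prefactor and same $d\Pi$ factors), so the second estimate follows from the first, and it suffices to prove the latter. Moreover, writing $P:=\a+\b+\xi_{1}+\xi_{2}+3/2$, if $P\le0$ then $q(\t,\vp,u,v)<2$ forces the entire left side to be $\lesssim1$, which is $\lesssim1/\mu_{\ab}(B(\t,r))$ because $\mu_{\ab}$ is a finite measure; hence I may assume $P>0$.

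Next I would normalize the parameters of the $d\Pi$ factors. Put $p_{1}=\a+\xi_{1}$, $p_{2}=\b+\xi_{2}$ (so $p_{1},p_{2}>-1$) and $A=p_{1}+\kappa_{1}$, $B=p_{2}+\kappa_{2}$; using $\st+\svp\simeq\st+r$ and $\ct+\cvp\simeq\ct+r$, the first estimate is equivalent to
\[
\iint\frac{d\Pi_{A}(u)\,d\Pi_{B}(v)}{q(\t,\vp,u,v)^{P}}\ \lesssim\ \frac{1}{r(\st+r)^{2p_{1}+1}(\ct+r)^{2p_{2}+1}}.
\]
For fixed $v$, the even part in $u$ of $q(\t,\vp,u,v)^{-P}$ is non-decreasing on $[0,1]$ — one checks this by differentiating, using $P>0$, $\st\svp\ge0$ and $q>0$ — while $d\Pi_{A}$ is an even probability measure that concentrates towards $u=0$ as $A$ increases (the density ratio $d\Pi_{A}/d\Pi_{A'}$ is a decreasing function of $|u|$ for $A\ge A'\ge-1/2$). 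Consequently the left side above is non-increasing in $A$ and, symmetrically, in $B$, so I may assume $A=\max(p_{1},-1/2)$ and $B=\max(p_{2},-1/2)$. With this choice one checks (using $p_{1},p_{2}>-1$) that $A+1/2<P$ and $B+1/2<P$.

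It then remains to estimate the double integral for these extremal $A,B$. I would integrate in $u$ first: for fixed $v$, set $C:=q(\t,\vp,1,v)$, so that $q(\t,\vp,u,v)=C+(1-u)\st\svp$. The elementary one-variable estimate — valid also in the degenerate case $A=-1/2$, where $d\Pi_{-1/2}=\tfrac12(\delta_{-1}+\delta_{1})$ — together with $A+1/2<P$ gives
\[
\int\frac{d\Pi_{A}(u)}{q(\t,\vp,u,v)^{P}}\ \simeq\ \frac{1}{(\st\svp+C)^{A+1/2}\,C^{\,P-A-1/2}}.
\]
Both factors are decreasing in $x:=(1-v)\ct\cvp$, with $C=x+\bigl(1-\cos\tfrac{\t-\vp}{2}\bigr)\simeq x+r^{2}$ and $\st\svp+C=1-v\ct\cvp$. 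I would then integrate this against $d\Pi_{B}$, splitting the range of $v$ (equivalently of $x$) according to the relative sizes of $x$, $(\st+r)^{2}$ and $r^{2}$; on each of the finitely many pieces the remaining integral is elementary, and, invoking $\st\svp+r^{2}\simeq(\st+r)^{2}$, $\ct\cvp+r^{2}\simeq(\ct+r)^{2}$ and $B+1/2<P$, one verifies that each contribution is $\lesssim 1/\bigl(r(\st+r)^{2p_{1}+1}(\ct+r)^{2p_{2}+1}\bigr)$. Here one repeatedly uses that $(\st+r)^{\gamma}\le r^{\gamma}$ and $(\ct+r)^{\gamma}\le r^{\gamma}$ for $\gamma<0$, that $\st+r$ and $\ct+r$ are bounded, and that $p_{1},p_{2}>-1$ keeps the exponents $p_{1}+1$, $p_{2}+1$, $2p_{1}+2$, $2p_{2}+2$ strictly positive.

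I expect the main obstacle to be precisely this last step — organizing the region splitting in the $v$-integral and checking, uniformly in $\t,\vp$, each of the resulting elementary estimates — with the delicate points being the degenerate configurations in which $A$ or $B$ equals $-1/2$ (so that the corresponding $d\Pi$ collapses to point masses at $\pm1$ and one integration disappears), and making sure the crude bounds used at each stage are not too lossy. By contrast, the two preliminary reductions — to the first estimate, and to the extremal values of $A$ and $B$ — are short.
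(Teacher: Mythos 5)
Your proposal is correct and follows essentially the same route as the paper: reduce to the first bound via $q\gtrsim|\t-\vp|^2$, absorb the factors $(\st+\svp)^{2\xi_1}(\ct+\cvp)^{2\xi_2}$ by shifting the parameters in the ball-volume formula, remove the $\kappa_i$ by monotonicity of the integral in the $d\Pi$-parameter (this is exactly the paper's Lemma \ref{lem:intest}(b)), and finish with iterated one-dimensional integrals of the type in Lemma \ref{lem:intest}(a). The only organizational difference is that where a parameter lies in $(-1,-1/2)$ the paper uses the crude bound $q\gtrsim|\t-\vp|^2$ together with finiteness of the measure, while you pass to the degenerate measure $d\Pi_{-1/2}$ and keep sharp one-variable estimates throughout; both yield the same conclusion.
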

Note that for any $\ab > -1$ fixed, the $\mu_{\ab}$ measure of the interval $B(\t,|\t-\vp|)$ can be 
described as follows, see \cite[Lemma 4.2]{NoSj},
\begin{equation}\label{ball}
\mu_{\ab}(B(\t,|\t-\vp|)) \simeq |\t-\vp|(\t+\vp)^{2\a + 1} (\pi-\t+\pi-\vp)^{2\b + 1}, 
	\qquad \t,\vp \in [0,\pi].
\end{equation}
Notice also that the right-hand sides of the estimates in Lemma \ref{lem:bridge} are always larger
than a positive constant, uniformly in $\t,\vp \in [0,\pi]$. This fact will be used in the sequel without
further mention.

To prove Lemma \ref{lem:bridge}, we need item (b) in the lemma below. This is a generalization
of \cite[Lemma 4.4]{NoSj} stated in \cite[Lemma 2.3]{NoSz}. 
Item (a) contains estimates obtained in \cite[Lemma 5.3]{NoSj2} and will be used in Section
\ref{sec:JPsharp} below to show sharp estimates of the Jacobi-Poisson kernel.
\begin{lem}
\label{lem:intest}
Let $\kappa \ge 0$ and $\gamma$ and $\nu$ be such that $\gamma > \nu +1/2 \ge 0$. Then
\begin{itemize}
\item[(a)]
$$
\int \frac{d\Pi_{\nu}(s)}{(D-Bs)^{\kappa}(A-Bs)^{\gamma}} \simeq
	\frac{1}{(D-B)^{\kappa} A^{\nu+1/2} (A-B)^{\gamma-\nu-1/2}}, \qquad 0 \le B < A \le D;
$$
\item[(b)]
\[
\int \frac{d\Pi_{\nu+\kappa}(s)}{(A-Bs)^{\gamma}} 
	\lesssim \frac{1}{A^{\nu+1\slash 2}(A-B)^{\gamma -\nu -1/2}}, \qquad 0 \le B < A.
\]
\end{itemize}
\end{lem}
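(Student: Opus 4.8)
The two parts are closely linked, so I would prove (a) first and then obtain (b) from it. For part (a), the strategy is to reduce the integral to a one-variable estimate on the explicit density of $d\Pi_\nu$. Recall that $d\Pi_\nu(s)$ is an even measure on $[-1,1]$ which, up to a constant, equals $(1-s^2)^{\nu-1/2}\,ds$; by symmetry it suffices to integrate over $s\in(0,1)$ after symmetrizing the integrand, so I would first replace the factors $(D-Bs)^{-\kappa}(A-Bs)^{-\gamma}$ by their even parts in $s$ and note that this even part is comparable to $(D-Bs)^{-\kappa}(A-Bs)^{-\gamma}$ on $(0,1)$ (this uses $0\le B<A\le D$, so the factors with $+Bs$ are harmless). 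Since $\kappa\ge 0$ and $D\ge A>B$, the factor $(D-Bs)^{-\kappa}$ is bounded above and below by constants times $(D-B)^{-\kappa}$ on $s\in(0,1)$, so it pulls out. What remains is to show
\[
\int_0^1 \frac{(1-s^2)^{\nu-1/2}}{(A-Bs)^{\gamma}}\,ds \simeq \frac{1}{A^{\nu+1/2}(A-B)^{\gamma-\nu-1/2}},
\]
which is a standard one-dimensional computation: split at $s=1/2$ (or substitute $s=1-r$ near the endpoint $s=1$), use $(1-s^2)\simeq (1-s)$ there, and compare $A-Bs$ with $(A-B)+B(1-s)$. The condition $\gamma>\nu+1/2\ge 0$ is exactly what makes the endpoint integral converge and produces the stated power $\gamma-\nu-1/2$ on $(A-B)$; the lower bound comes from restricting to $s$ near $1$.

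**From (a) to (b).** For part (b), set $\kappa'=\kappa$ and write $d\Pi_{\nu+\kappa}(s)$. The cleanest route is to apply (a) with the roles arranged so that the extra parameter $\kappa$ in the order of the measure is absorbed. Concretely, using $d\Pi_{\nu+\kappa}(s)\,ds^{-1}\simeq (1-s^2)^{\nu+\kappa-1/2}$ and the same symmetrization and endpoint analysis as above,
\[
\int_0^1 \frac{(1-s^2)^{\nu+\kappa-1/2}}{(A-Bs)^{\gamma}}\,ds
\lesssim \frac{1}{A^{\nu+1/2}(A-B)^{\gamma-\nu-1/2}},
\]
because near $s=1$ one has $(1-s)^{\nu+\kappa-1/2}\le (1-s)^{\nu-1/2}$ when $(A-B)+B(1-s)\le A$ forces $1-s$ to be small relative to the relevant scale — more carefully, one bounds $(1-s^2)^{\kappa}\lesssim 1$ when $A-B\ge$ (the scale of $1-s$) and absorbs it into $A^{-\kappa}\le$ const otherwise, matching the claimed right-hand side which has no dependence on $\kappa$. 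Alternatively, and perhaps more transparently, I would invoke (a) directly with $\nu$ replaced by $\nu+\kappa$ and with a dummy factor $(D-Bs)^0$, giving $A^{-(\nu+\kappa)-1/2}(A-B)^{-(\gamma-\nu-\kappa-1/2)}$, and then observe that since $A\ge A-B$ this is $\le A^{-\nu-1/2}(A-B)^{-(\gamma-\nu-1/2)}$; one only needs $\gamma>\nu+\kappa+1/2$ for this application, but the weaker hypothesis $\gamma>\nu+1/2$ in the statement is handled by a separate elementary bound when $\nu+\kappa+1/2\ge\gamma$, splitting at $s=1/2$ and estimating the near-endpoint piece by $(A-B)^{-\gamma}\int_0^1(1-s)^{\nu+\kappa-1/2}ds$, which is finite and dominated by the claimed bound since $A-B\le A$.

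**Main obstacle.** The genuinely delicate point is the two-sided nature of (a): the upper bound is routine, but the matching lower bound requires that the mass of $d\Pi_\nu$ concentrated near $s=1$ actually contributes at the right order, and that the factor $(A-Bs)^{-\gamma}$ does not decay too fast there. Tracking the interplay between the two scales $A-B$ (distance of $A$ from the right endpoint value of $A-Bs$) and $A$ (its value at $s=0$), and verifying that the endpoint substitution $s\mapsto 1-r$ with $r\in(0,1)$ yields precisely $(A-B)^{\gamma-\nu-1/2}$ in the denominator without spurious logarithmic factors, is where care is needed. In part (b) the only subtlety is making sure the estimate is uniform in $\kappa\ge 0$ and that the stated (weaker) hypothesis $\gamma>\nu+1/2$ suffices even when $\kappa$ is large; this is resolved by the case split described above, since for large $\kappa$ the measure $d\Pi_{\nu+\kappa}$ is even more concentrated away from the endpoints, making the integral smaller, not larger.
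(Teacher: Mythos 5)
For part (a) the paper gives no proof at all: it simply cites \cite{NoSj2}, and its entire argument for the lemma is the one-line observation that, since $(1-s^2)^{\kappa}\le 1$, the density of $d\Pi_{\nu+\kappa}$ is dominated by a constant times that of $d\Pi_{\nu}$, so the integral in (b) is controlled by the $\kappa=0$ instance of (a). Your first route to (b) contains exactly this idea (the inequality $(1-s)^{\nu+\kappa-1/2}\le(1-s)^{\nu-1/2}$ near the endpoint), so that part is sound, and your decision to actually prove (a) by symmetrization plus an endpoint analysis is legitimate and goes beyond what the paper records.

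Two of your intermediate claims are, however, false as stated. First, in (a) you assert that $(D-Bs)^{-\kappa}$ is bounded above \emph{and below} by constants times $(D-B)^{-\kappa}$ on $s\in(0,1)$, ``so it pulls out''. Only the upper bound $(D-Bs)^{-\kappa}\le(D-B)^{-\kappa}$ holds uniformly; for the lower bound take $D=A=1$, $B=0.99$: at $s=0$ the factor equals $1$ while $(D-B)^{-\kappa}=100^{\kappa}$. The comparability $D-Bs\simeq D-B$ is valid only where $1-s\lesssim(A-B)/B$ (using $A\le D$), which fortunately is the region carrying the dominant mass of the integral, so the lower bound survives --- but it must be argued on that region, not on all of $(0,1)$. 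Second, in your ``alternative'' route to (b), the fallback for the case $\gamma\le\nu+\kappa+1/2$ bounds the near-endpoint piece by $(A-B)^{-\gamma}$ and claims this is dominated by $A^{-\nu-1/2}(A-B)^{-\gamma+\nu+1/2}$ ``since $A-B\le A$''; that inequality amounts to $(A/(A-B))^{\nu+1/2}\lesssim 1$ and fails whenever $\nu+1/2>0$ and $B$ is close to $A$. Discard that alternative: your first route --- drop $(1-s^2)^{\kappa}\le 1$ and apply (a) with the original $\nu$ and $\kappa=0$, which needs only $\gamma>\nu+1/2$ --- is the correct and complete argument, and it is precisely the paper's.
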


\begin{proof}
Part (a) is proved in \cite{NoSj2}. Part (b) can easily be deduced from (a) since the integral to 
be estimated is controlled by the same integral with $\kappa=0$.
\end{proof}

\begin{proof}[Proof of Lemma \ref{lem:bridge}.]
The reasoning is a combination of the arguments given in the proofs of \cite[Lemma 2.1]{NoSz} and 
\cite[Lemma 4.3]{NoSj}. Since $(\t-\vp)^2 \lesssim q(\t,\vp,u,v)$, it suffices to verify the first estimate.
Further, we may reduce the task to showing that 
\begin{equation}\label{red}
\iint 
\frac{d\Pi_{\a+\kappa_1}(u) \, d\Pi_{\b+\kappa_2}(v) }{( q(\t,\vp,u,v) )^{\a+\b+3\slash 2}}
\lesssim \frac{1}{\mu_{\ab}(B(\t,|\t-\vp|))}, \qquad \t,\vp \in [0,\pi], \quad \t \ne \vp, 
\end{equation}
under the assumption $\a+\kappa_1,\b+\kappa_2 \ge -1/2$. Indeed, applying \eqref{red} 
with $\a+\xi_1,\b+\xi_2$ instead of $\ab$, and then using \eqref{ball}, we obtain
\begin{align*}
	&\bigg( \st+\svp \bigg)^{2\xi_1} \bigg( \ct + \cvp \bigg)^{2\xi_2} \iint
\frac{d\Pi_{\a+\xi_1+\kappa_1}(u) \, 
	d\Pi_{\b+\xi_2+\kappa_2}(v) }{( q(\t,\vp,u,v) )^{\a+\b+\xi_1+\xi_2+3\slash 2}}\\
	& \quad \lesssim
\big( \t+\vp \big)^{2\xi_1} \big( \pi - \t + \pi - \vp \big)^{2\xi_2} 
\frac{1}{\mu_{\a+\xi_1,\b+\xi_2}(B(\t,|\t-\vp|))}
 \simeq
\frac{1}{\mu_{\ab}(B(\t,|\t-\vp|))}.
\end{align*}

To prove \eqref{red}, it is convenient to distinguish two cases.

\noindent \textbf{Case 1:} $\ab \in (-1,-1\slash 2)$.

\noindent Taking into account the estimates, see \cite[(21)]{NoSj},
\begin{equation*}
|\t-\vp|^2 \simeq 2 \sin^2\frac{\t-\vp}{4} 
\le
q(\t,\vp,u,v) 
\le 2 \cos^2\frac{\t-\vp}{4}
\le 2, \qquad \t, \vp \in [0,\pi], \quad u,v \in [-1,1],
\end{equation*}
and the fact that $d\Pi_{\a+\kappa_1}$ and $d\Pi_{\b+\kappa_2}$ are finite, we get
\[
\iint
\frac{d\Pi_{\a+\kappa_1}(u) \, d\Pi_{\b+\kappa_2}(v) }{( q(\t,\vp,u,v) )^{\a+\b+3\slash 2}}
\lesssim
\frac{1}{|\t-\vp|^{2\a+1} |\t-\vp|^{2\b+1} |\t-\vp|} + \chi_{\{\a+\b+3/2 < 0\}}.
\]
Then using the inequalities
$|\t-\vp| \le \t + \vp$ and $|\t-\vp| \le \pi - \t + \pi - \vp$ 
together with \eqref{ball}, we obtain \eqref{red}.

\noindent \textbf{Case 2:} At least one of the parameters $\ab$ is in $[-1\slash 2,\infty)$, 
say $\b \ge -1/2$. 

\noindent Proceeding as in the proof of \cite[Lemma 4.3]{NoSj} but applying Lemma \ref{lem:intest} (b)
instead of \cite[Lemma 4.4]{NoSj} to the integral against $d\Pi_{\b+\kappa_2}$, we see that
\[
\iint 
\frac{d\Pi_{\a+\kappa_1}(u) \, d\Pi_{\b+\kappa_2}(v) }{( q(\t,\vp,u,v) )^{\a+\b+3\slash 2}}
\lesssim
\frac{1}{(\pi-\t+\pi-\vp)^{2\b+1}} \, \int
\frac{d\Pi_{\a+\kappa_1}(u)}{( q(\t,\vp,u,1) )^{\a+1}}.
\]
When $\a \ge -1\slash 2$, another application of Lemma \ref{lem:intest} (b) leads to \eqref{red},
see the proof of \cite[Lemma 4.3]{NoSj}.
If $\a \in (-1,-1/2)$ we can apply the arguments from Case 1 getting
$$
\int \frac{d\Pi_{\a+\kappa_1}(u)}{(q(\t,\vp,u,1))^{\a+1}} \lesssim \frac{1}{|\t-\vp|^{2\a+2}}
	\le \frac{1}{(\t+\vp)^{2\a+1} |\t-\vp|}.
$$
Using now \eqref{ball}, we arrive at the desired conclusion. 

The proof of Lemma \ref{lem:bridge} is complete.
\end{proof}

The remaining part of this section embraces various technical results, which will 
allow us to control the relevant kernels by means of the estimates from Lemma \ref{lem:bridge}.
To state the next lemma and also for further use, we introduce the following notation. We will omit the
arguments and write briefly $\q$ instead of $q(\t,\vp,u,v)$, when it does not lead to confusion.
For a given parameter $\lam \in \mathbb{R}$, we define the auxiliary function
\[
\Plam := \frac{\sinh\frac{t}{2}}{(\cosh\frac{t}{2}-1 + \q)^{\lam}},
\]
so that $\Puv = c_{\a,\b} \Psi^{\a+\b+2}(t,\mathfrak{q})$; see \eqref{def:Psi}.

\begin{lem}\label{lem:EST}
Let $\lam \in \mathbb{R}$, $M,N \in \mathbb{N}=\{0,1,2,\ldots\}$ and $K,R,L \in \{0 , 1\}$ be fixed. Then
\begin{align*}
&\big| 
	\partial_u^K \partial_v^R \partial_\vp^L \partial_\t^N \partial_t^M \Plam
\big| \\
&\quad
\lesssim 
\sum_{k,r=0,1,2} 
\bigg( \st+\svp \bigg)^{Kk} \bigg( \ct + \cvp \bigg)^{Rr} 
\frac{1}{(t^2 + \q)^{   \lam + (L+N+M-1 + Kk +Rr)\slash 2  }},
\end{align*}
uniformly in $t\in (0,1]$, $\t,\vp \in [0,\pi]$ and $u,v \in [-1,1]$.
\end{lem}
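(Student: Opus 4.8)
The plan is to reduce everything to elementary calculus estimates for the single-variable building block
$$
g(s) := \frac{1}{(s + \q)^{\lam}},
$$
where $s = \cosh\frac{t}{2} - 1 \simeq t^2$ for $t \in (0,1]$, together with the factor $\sinh\frac{t}{2} \simeq t$. The key structural observation is that $\Plam = \sinh\frac{t}{2}\cdot g(\cosh\frac{t}{2}-1)$, so all derivatives in $t$ produce sums of terms of the shape (bounded smooth function of $t$) times $(\cosh\frac t2 - 1 + \q)^{-\lam-j}$ times powers of $\sinh\frac t2$ and $\cosh\frac t2$; since $t \in (0,1]$, each such term is $\lesssim t^{1-2j'}(t^2+\q)^{-\lam-j''}$ for suitable nonnegative integers, and using $t^2 \lesssim t^2 + \q$ one absorbs the extra powers of $t$. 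This disposes of the $\partial_t^M$ part and explains the shift $M/2$ in the exponent, modulo the $-1/2$ coming from the single factor $\sinh\frac t2 \simeq t$.

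Next I would handle the derivatives in $\t$ and $\vp$. Here the chain rule gives $\partial_\t \Plam = -\lam\, \sinh\frac t2\, (\cosh\frac t2 - 1 + \q)^{-\lam-1}\, \partial_\t \q$, and one iterates. The crucial input is control of $\partial_\t^N \partial_\vp^L \q$: differentiating $\q = 1 - u\st\svp - v\ct\cvp$ in $\t$ or $\vp$ just permutes $\st \leftrightarrow \ct$ (up to signs and the harmless factors $\svp,\cvp$), so every such derivative is bounded by $1$ in absolute value, uniformly in all variables. Combined with $|u|,|v|\le 1$, iterating the chain rule produces a sum of terms $(\cosh\frac t2 - 1 + \q)^{-\lam-j}$ with $j$ between $0$ and $N+L$, times $\sinh\frac t2$ and bounded functions; each is $\lesssim t\,(t^2+\q)^{-\lam-j} \lesssim (t^2+\q)^{-\lam-j+1/2}$, and taking $j = N+L$ for the worst term matches the claimed exponent $\lam + (L+N-1)/2$ (the terms with smaller $j$ are even better since $t^2+\q \gtrsim (\t-\vp)^2$ is not bounded below, but $t^2 + \q \le 2 + 1 \lesssim 1$, so smaller exponents give larger bounds — one keeps the largest exponent, i.e. the largest $j$). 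This is the point to be slightly careful: since $t^2+\q$ is bounded above by an absolute constant, among the finitely many terms it is the one with the \emph{largest} power in the denominator that dominates, which is exactly why the stated bound is sharp-looking and why only the top-order term $j=N+L$ (plus the $u,v$-differentiation corrections below) survives on the right-hand side.

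The genuinely new feature compared with the $t$-, $\t$-, $\vp$-derivatives is the behavior of $\partial_u^K\partial_v^R$, $K,R\in\{0,1\}$: differentiating $\q$ in $u$ yields $-\st\svp$, which is \emph{not} bounded below but is bounded above by $(\st + \svp)^2 / $ (something) — more precisely $\st\svp \le \frac14(\st+\svp)^2$, and this is the origin of the sum over $k,r \in \{0,1,2\}$ and of the prefactors $(\st+\svp)^{Kk}(\ct+\cvp)^{Rr}$. When $K=1$ the chain rule gives a term with $\partial_u\q = -\st\svp$ in the numerator and one extra power in the denominator; bounding $\st\svp \lesssim (\st+\svp)^2$ gives the $k=2$ contribution, while if one instead wants fewer powers of $(\st+\svp)$ one pays with a smaller exponent in the denominator — hence the full range $k=0,1,2$ must be retained to get a clean uniform statement. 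The mixed case $K=R=1$ combines both. So the overall strategy is: expand $\partial_u^K\partial_v^R\partial_\vp^L\partial_\t^N\partial_t^M\Plam$ by the Leibniz/chain rule into a finite sum of explicit terms; in each term estimate $\sinh\frac t2 \lesssim t$, each $\cosh$-factor $\lesssim 1$, each $\partial_\t,\partial_\vp$-derivative of $\q$ by $1$, each $\partial_u,\partial_v$-derivative of $\q$ by a constant times $\st\svp$ or $\ct\cvp$, and then $\st\svp \le \frac14(\st+\svp)^2$, $\ct\cvp\le\frac14(\ct+\cvp)^2$; finally collect powers of $t$, use $t^2 \lesssim t^2 + \q$ to absorb surplus factors, and replace $\cosh\frac t2 - 1 + \q \simeq t^2 + \q$ for $t\in(0,1]$. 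Organizing the bookkeeping of exponents so that every term falls under the asserted sum is the main (though purely routine) obstacle; there is no analytic difficulty, only combinatorial care with the Leibniz expansion and with the fact noted above that, the quantity $t^2+\q$ being bounded, one must always keep the term with the \emph{largest} exponent in the denominator.
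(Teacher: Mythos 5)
There is a genuine gap in the treatment of the $\t$- and $\vp$-derivatives, and it is fatal to the exponent count. You bound each derivative of $\q$ in $\t$ or $\vp$ by an absolute constant and then conclude that the worst term coming from $N+L$ differentiations is $\sinh\frac t2\,(\cosh\frac t2-1+\q)^{-\lam-N-L}\lesssim (t^2+\q)^{-\lam-N-L+1/2}$, asserting that this ``matches the claimed exponent $\lam+(L+N-1)/2$''. It does not: $\lam+N+L-\tfrac12$ versus $\lam+\tfrac{N+L}2-\tfrac12$ differ by $(N+L)/2$, so already for $N=1$, $L=M=K=R=0$ your argument only yields $(t^2+\q)^{-\lam-1/2}$ where the lemma asserts $(t^2+\q)^{-\lam}$, and since $t^2+\q$ can be arbitrarily small (it is only bounded \emph{above}), this is a strictly weaker and insufficient bound. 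The missing ingredient is the refined estimate $|\partial_\t\q|\lesssim\sqrt\q$ and $|\partial_\vp\q|\lesssim\sqrt\q$ (Lemma \ref{lem:NoSj4.5}, quoted from \cite[Lemma 4.5]{NoSj}): each \emph{first-order} factor $\partial_\t\q$ or $\partial_\vp\q$ produced by the chain rule contributes $\sqrt{t^2+\q}$ to the numerator, cancelling half of the unit increase of the denominator exponent, which is exactly what produces the increment $1/2$ per derivative. The observation that $\st\svp\le\tfrac14(\st+\svp)^2$ does not help here, because $\partial_\t\q=-\tfrac u2\ct\svp+\tfrac v2\st\cvp$ involves mixed products that are not small compared with $1$ in general; one really needs the cancellation encoded in $|\partial_\t\q|\lesssim\sqrt\q$.

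A second, related omission: for $N\ge 2$ the chain rule produces not only powers of $\partial_\t\q$ but also higher derivatives $\partial_\t^m\q$, which are merely bounded (indeed $\partial_\t^{2m}\q=(-4)^{-m}(\q-1)$ and $\partial_\t^{2m-1}\q=(-4)^{1-m}\partial_\t\q$) and hence contribute no compensating factor $\sqrt{t^2+\q}$. The paper handles this with Fa\`a di Bruno's formula and the combinatorial inequality $\sum_i j_i-\tfrac12\sum_{\mathrm{odd}\,i}j_i\le N/2$ for indices with $j_1+2j_2+\dots+Nj_N=N$, which shows that the net exponent increase over all terms is still at most $N/2$. The same bookkeeping (with the exponent $(M+1)/2$) is needed for $\partial_t^M$, where again only the odd-order $t$-derivatives of $\cosh\frac t2-1+\q$ carry the small factor $\sinh\frac t2\simeq t\le\sqrt{t^2+\q}$. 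Your outline gestures at absorbing powers of $t$ but does not confront the fact that the even-order derivatives give nothing to absorb; without the Fa\`a di Bruno accounting the claimed exponent cannot be reached. The $u,v$-part of your plan (bounding $|\partial_u\q|$ by $(\st+\svp)^2$, mixed derivatives such as $\partial_\t\partial_u\q$ by $\st+\svp$, and $\partial_\vp\partial_\t\partial_u\q$ by $1$, whence $k\in\{0,1,2\}$) is in the right spirit and agrees with the paper, but it must be integrated into the corrected exponent count above.
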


To prove this lemma, we need two preparatory results. One of them is 
Fa\`a di Bruno's formula for the $N$th derivative, $N \ge 1$, of the composition
of two functions (see \cite{Jo} for the related references and interesting historical remarks). 
With $D$ denoting the ordinary derivative, it reads
\begin{equation} \label{Faa}
D^N(g\circ f)(\t) = \sum \frac{N!}{j_1! \cdot \ldots \cdot j_N!} \;
	\big(D^{j_1+\ldots+j_N} g\big)
	\circ f(\t) \bigg( \frac{D^1 f(\t)}{1!}\bigg)^{j_1}\cdot \ldots \cdot
	\bigg( \frac{D^N f(\t)}{N!}\bigg)^{j_N},
\end{equation}
where the summation runs over all $j_1,\ldots,j_N \ge 0$ such that $j_1+2j_2+\ldots+N j_N = N$.
Further, in the proof of Lemma \ref{lem:EST} we will make use of
the following bounds given in \cite{NoSj}.

\begin{lem}[{\cite[Lemma 4.5]{NoSj}}] \label{lem:NoSj4.5}
For all $\t,\vp \in [0,\pi]$ and $u,v \in [-1,1]$, one has
\[
\big|
\partial_\t \q
\big|
\lesssim \sqrt{\q} \qquad \text{and} \qquad 
\big|
\partial_\vp \q
\big|
\lesssim \sqrt{\q}.
\]
\end{lem}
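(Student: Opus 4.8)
The plan is to prove the slightly stronger bounds $|\partial_\t\q|\le\sqrt{\q}$ and $|\partial_\vp\q|\le\sqrt{\q}$ by reducing both to a single elementary trigonometric inequality that exploits the sum-and-difference structure of $\q$. First I would differentiate $\q=1-u\st\svp-v\ct\cvp$ directly, obtaining
\[
\partial_\t\q=\tfrac12\big(v\st\cvp-u\ct\svp\big),\qquad
\partial_\vp\q=\tfrac12\big(v\ct\svp-u\st\cvp\big).
\]
To linearise the dependence on $u,v$ I would pass to the sum and difference variables $p=\frac{u+v}2$ and $m=\frac{u-v}2$, which satisfy the identity $|p|+|m|=\max(|u|,|v|)\le1$, and then apply the product-to-sum formulas. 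Writing $A=\frac{\t-\vp}2$ and $B=\frac{\t+\vp}2$ (so that $\cos A\ge0$, since $\t,\vp\in[0,\pi]$ forces $A\in[-\pi/2,\pi/2]$), this recasts the three quantities in the clean form
\[
\q=1-p\cos A+m\cos B,\quad
\partial_\t\q=\tfrac12\big(p\sin A-m\sin B\big),\quad
\partial_\vp\q=-\tfrac12\big(p\sin A+m\sin B\big).
\]

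Next I would bound the derivatives by a weighted Cauchy--Schwarz inequality with weights $|p|,|m|$: since $|p|+|m|\le1$,
\[
4(\partial_\t\q)^2,\ \ 4(\partial_\vp\q)^2\ \le\ (|p|+|m|)\big(|p|\sin^2A+|m|\sin^2B\big)\ \le\ |p|\sin^2A+|m|\sin^2B.
\]
It then suffices to show $|p|\sin^2A\le2\q$ and $|m|\sin^2B\le2\q$, which together give $(\partial_\t\q)^2,(\partial_\vp\q)^2\le\q$. The first is straightforward: using $\cos A\ge0$ and $1-|p|-|m|\ge0$,
\[
\q\ge 1-|p|\cos A-|m|=(1-|p|-|m|)+|p|(1-\cos A)\ge|p|(1-\cos A)\ge\tfrac12|p|\sin^2A,
\]
where the last step uses $\sin^2A\le2(1-\cos A)$.

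The main obstacle is the second bound $|m|\sin^2B\le2\q$, because the term $m\cos B$ in $\q$ may be negative and, unlike $\cos A$, the factor $\cos B$ is not sign-definite on $[0,\pi]$. Here I would argue
\[
\q\ge 1-|p|+m\cos B\ge|m|+m\cos B=|m|\big(1+\operatorname{sgn}(m)\cos B\big),
\]
and then split on the sign of $m$: when $m\ge0$ this equals $|m|(1+\cos B)\ge\frac12|m|\sin^2B$ because $1-\cos B\le2$, and when $m\le0$ it equals $|m|(1-\cos B)\ge\frac12|m|\sin^2B$ because $1+\cos B\le2$. Finally, the estimate for $\partial_\vp\q$ needs no separate work: its expression differs from that of $\partial_\t\q$ only by the sign of the $m\sin B$ term, which is immaterial after squaring (equivalently, one may invoke the symmetry of $\q$ under $\t\leftrightarrow\vp$). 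This yields both inequalities with constant $1$, and in particular the claimed $\lesssim$.
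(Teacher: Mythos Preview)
Your argument is correct; in fact you establish the sharp inequalities $|\partial_\theta\q|\le\sqrt{\q}$ and $|\partial_\varphi\q|\le\sqrt{\q}$ with constant $1$. The paper does not give its own proof of this lemma---it simply cites \cite[Lemma 4.5]{NoSj}---so there is no in-paper argument to compare against, and your self-contained derivation via the change of variables $p=\tfrac{u+v}{2}$, $m=\tfrac{u-v}{2}$, $A=\tfrac{\theta-\varphi}{2}$, $B=\tfrac{\theta+\varphi}{2}$ together with the weighted Cauchy--Schwarz step is a clean way to obtain the result.
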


\begin{proof}[Proof of Lemma \ref{lem:EST}.]  
Given $\lam \in \mathbb{R}$, we introduce the auxiliary function
\[
\Ptilde := \frac{1}{\sinh\frac{t}2} \, \Plam = \frac{1}{(\cosh\frac{t}2 - 1 + \q)^{\lam}}.
\]
We first reduce our task to showing the estimate
\begin{align}
&\big| 
	\partial_u^K \partial_v^R \partial_\vp^L \partial_\t^N  \Ptilde
\big| \nonumber\\
&\quad
\lesssim 
\sum_{k,r=0,1,2} 
\bigg( \st+\svp \bigg)^{Kk} \bigg( \ct + \cvp \bigg)^{Rr} 
\frac{1}{(t^2 + \q)^{   \lam + (L+N + Kk +Rr)\slash 2  }} \label{ESTred}
\end{align}
for $t \in (0,1]$, $\t,\vp \in [0,\pi]$ and $u,v\in [-1,1]$; 
here $\lam \in \mathbb{R}$, $N \in \mathbb{N}$ and $K,R,L \in \{0 , 1\}$ are fixed.

Observe that
\[
\Plam
=c_\lam
\left\{ \begin{array}{ll}
\partial_t \frac{1}{(\cosh\frac{t}{2} - 1 + \q)^{\lam - 1}}
, & \lam \ne 1\\
\partial_t \log (\cosh\frac{t}{2} - 1 + \q), & \lam = 1
\end{array} \right.,
\]
where $c_\lam$ is a constant, possibly negative.
Using Fa\`a di Bruno's formula \eqref{Faa} with $f(t)=\cosh\frac{t}2 - 1 + \q$ and either 
$g(x)=x^{-\lam+1}$ or $g(x)=\log x$, we obtain
\begin{align*} 
\partial_t^M \Plam 
&= 
c_{\lam} \, \partial_t^{M+1} (g\circ f) (t) \\
&=
\sum_{\substack{j_i \ge 0 \\ j_1+\ldots+ (M+1)j_{M+1}=M+1}} C_{\lam,j} 
\Big(\sinh\frac{t}{2}\Big)^{\oddM} \Big(\cosh\frac{t}{2}\Big)^{\evenM}
\Ptildej,
\end{align*}
where the $C_{\lam,j}$ are constants, possibly zero.
Differentiating these identities with respect to 
$\t,\vp,u,v$ and then applying \eqref{ESTred} and the relations
\begin{equation*}
\cosh\frac{t}2 \simeq 1, \qquad
\sinh\frac{t}{2} \simeq t \le \sqrt{t^2 +\q},
\qquad \quad t \in (0,1],
\end{equation*}
we see that
\begin{align*}
\big| 
	\partial_u^K \partial_v^R \partial_\vp^L \partial_\t^N \partial_t^M \Plam
\big| 
	&\lesssim
\sum_{\substack{j_i \ge 0 \\ j_1+\ldots+ (M+1)j_{M+1}=M+1}}
\sum_{k,r=0,1,2} 
\bigg( \st+\svp \bigg)^{Kk} \bigg( \ct + \cvp \bigg)^{Rr} \\
&\quad \qquad \times
\frac{1}{(t^2 + \q)^{   \lam -1 + \sum_i j_i - (\oddM) \slash 2+ (L+N + Kk +Rr)\slash 2  }}.
\end{align*}
Now by the boundedness of $\q$ and the inequality
\begin{equation}\label{estj}
\sum_i j_i - \frac{1}{2} \oddM 
\le \frac{M+1}2,
\end{equation}
forced by the constraint $j_1+\ldots+ (M+1)j_{M+1}=M+1$,
we get the asserted estimate. Thus it remains to prove \eqref{ESTred}.

We assume that $N\ge 1$. The simpler case $N = 0$ is left to the reader.
Taking into account the relations
\[
\partial_\t^{2m} \q = (-4)^{-m}(\q-1), 
\quad
\partial_\t^{2m-1} \q = (-4)^{1-m} \partial_\t \q,
\qquad m\ge 1,
\]
see \cite[Section 4]{NoSj},
and using Fa\`a di Bruno's formula with $f(\t) = \cosh\frac{t}{2} - 1 + \q$ and $g(x) = x^{-\lam}$, we get 
\begin{align*}
\partial_\t^N \Ptilde =
 \sum_{\substack{j_i \ge 0 \\ \,\quad j_1+\ldots+ Nj_{N}=N}} c_{\lam,j} 
\frac{1}{(\cosh\frac{t}{2} - 1 + \q)^{\lam + \sum_i j_i}} 
\,
(\q-1)^{\evenN} (\partial_\t \q)^{\oddN},
\end{align*}
where $c_{\lam,j}$ are constants.
Further, keeping in mind that $L,R,K \in \{ 0,1 \}$ and applying repeatedly Leibniz' rule, we see
that $\partial_\vp^L \partial_\t^N  \Ptilde$ is a sum of terms of the form constant times 
\begin{align*}
\frac{1}{(\cosh\frac{t}{2} - 1 + \q)^{\lam + \sum_i j_i + l_1}} 
\,
(\partial_\vp \q)^{l_1+l_2}
(\q-1)^{\evenN - l_2} (\partial_\t \q)^{\oddN- l_3}
(\partial_\vp \partial_\t \q)^{l_3},
\end{align*}
where the indices run over the set described by the conditions $j_i \ge 0$, $j_1+\ldots+ Nj_{N}=N$, 
$l_1, l_2, l_3 \ge 0$, $l_1+l_2+l_3=L$ and the exponents of $\q - 1$ and $\partial_\t \q$ are nonnegative.
Similarly, $\partial_v^R \partial_\vp^L \partial_\t^N  \Ptilde$ is a sum of terms of the form constant
times 
\begin{align*}
&\frac{1}{(\cosh\frac{t}{2} - 1 + \q)^{\lam + \sum_i j_i + l_1+r_1}}
(\partial_v \q)^{r_1+r_3}
(\partial_\vp \q)^{l_1+l_2-r_2} 
(\partial_v \partial_\vp \q)^{r_2} 
(\q-1)^{\evenN - l_2 - r_3} \\
	&\qquad \times
(\partial_\t \q)^{\oddN- l_3 - r_4}
(\partial_v \partial_\t \q)^{r_4}
(\partial_\vp \partial_\t \q)^{l_3 - r_5}
(\partial_v \partial_\vp \partial_\t \q)^{r_5},
\end{align*}
where also 
$r_1, \ldots, r_5 \ge 0$, $r_1+\ldots+r_5=R$, $l_1 + l_2 \ge r_2$, $l_3 \ge r_5$.
Finally, since the derivative $\partial_u \partial_v \q$ vanishes,
$\partial_u^K \partial_v^R \partial_\vp^L \partial_\t^N  \Ptilde$ is a sum of terms of the form 
constant times 
\begin{align*}
&\frac{1}{(\cosh\frac{t}{2} - 1 + \q)^{\lam + \sum_i j_i + l_1+r_1+k_1}}
(\partial_u \q)^{k_1+k_3}  (\partial_v \q)^{r_1+r_3} 
(\partial_\vp \q)^{l_1+l_2-r_2-k_2} (\partial_u \partial_\vp \q)^{k_2}
(\partial_v \partial_\vp \q)^{r_2}   \\
	&\qquad \times
(\q-1)^{\evenN - l_2 - r_3 - k_3} (\partial_\t \q)^{\oddN- l_3 - r_4 - k_4}
(\partial_u \partial_\t \q)^{k_4}
(\partial_v \partial_\t \q)^{r_4} 
(\partial_\vp \partial_\t \q)^{l_3 - r_5 - k_5} \\
	&\qquad \times
(\partial_u \partial_\vp \partial_\t \q)^{k_5}
(\partial_v \partial_\vp \partial_\t \q)^{r_5}.
\end{align*}
Here we must add the conditions $k_1, \ldots, k_5 \ge 0$, $k_1+\ldots+k_5=K$
and replace $l_1 + l_2 \ge r_2$, $l_3 \ge r_5$ by $l_1 + l_2 \ge r_2 + k_2$, $l_3 \ge r_5 + k_5$.
We shall estimate all the factors in this product from above. Since $t \le 1$, 
we can replace $\cosh\frac{t}2-1+\q$ by $t^2+\q$.
The quantities $\q$ and $\partial_\vp \partial_\t \q$ are bounded. Further, we apply 
Lemma \ref{lem:NoSj4.5} to get
\[
|\partial_\vp \q| + |\partial_\t \q| \lesssim \sqrt\q \le \sqrt{t^2+\q}.
\]
To deal with the resulting exponent of $1/(t^2 + \q)$, we observe that
\[
l_1-l_2+l_3 \le L, \qquad
\sum_i j_i - \frac{1}{2}\oddN \le \frac{N}2,
\]
cf{.} \eqref{estj}. Using also the estimates
\begin{displaymath}
\begin {array}{lll}
&|\partial_u \q| \le \big( \st+\svp \big)^{2}, \qquad &
|\partial_v \q| \le \big( \ct + \cvp \big)^{2}, \\ 
&|\partial_\t \partial_u \q| + |\partial_\vp \partial_u \q| \le \st+\svp, \qquad &
|\partial_\t \partial_v \q| + |\partial_\vp \partial_v \q| \le \ct + \cvp, \\
&|\partial_\vp \partial_\t \partial_u \q| \le 1, \qquad & 
|\partial_\vp \partial_\t \partial_v \q| \le 1,
\end {array}
\end{displaymath}
we infer that
\begin{align*}
\big| \partial_u^K \partial_v^R \partial_\vp^L \partial_\t^N  \Ptilde \big|
	\lesssim & 
\sum_{\substack{r_1+\ldots+r_5=R \\ k_1+\ldots+k_5=K }} 
\bigg( \st+\svp \bigg)^{2k_1+2k_3+k_2+k_4} 
\bigg( \ct+\cvp \bigg)^{2r_1+2r_3+r_2+r_4} \\
	& \quad \times
\frac{1}{(t^2 + \q)^{\lam + (N+L+ 2k_1+k_2+k_4 + 2r_1+r_2+r_4)\slash 2}} .
\end{align*}
Notice that $2k_1+k_2+k_4 \in \{0,K,2K \}$, and similarly 
$2r_1+r_2+r_4 \in \{0,R,2R \}$. 
This observation leads directly to \eqref{ESTred}.
The proof of Lemma \ref{lem:EST} is complete.
\end{proof}
Define
$$
d\abpiK = \begin{cases}
		d\Pi_{-1\slash 2}, & K=0 \\
		d\Pi_{\a + 1}, &  K=1
	\end{cases},
$$
and similary for $d\abpiR$.
\begin{cor} \label{cor:t1}
Let $M,N \in \mathbb{N}$ and $L \in \{0 , 1\}$ be fixed. 
The following estimates hold uniformly in $t\in (0,1]$ and $\t,\vp \in [0,\pi]$.
\begin{itemize}
\item[(i)]
If $\a,\b \ge -1\slash 2$, then 
\begin{align*}
\big| 
	 \partial_\vp^L \partial_\t^N \partial_t^M 
H_{t}^{\ab}(\t,\vp)  
\big|  
	\lesssim 
\iint
\frac{\pia \, \pib}{(t^2 + \q)^{   \a + \b + 3\slash 2 + (L+N+M)\slash 2  }}.
\end{align*}
\item[(ii)]
If $-1 < \a < -1\slash 2 \le \b$, then 
\begin{align*}
\big| 
	 \partial_\vp^L \partial_\t^N \partial_t^M 
H_{t}^{\ab}(\t,\vp)  
\big|  
	&\lesssim 
1 + 
\sum_{K=0,1}
\sum_{k=0,1,2} 
\bigg( \st+\svp \bigg)^{Kk}  \\ 
& \qquad \qquad \times
\iint
\frac{\piK \, \pib}{(t^2 + \q)^{   \a + \b + 3\slash 2 + (L+N+M + Kk)\slash 2  }}.
\end{align*}
\item[(iii)]
If $-1 < \b < -1\slash 2 \le \a$, then 
\begin{align*}
\big| 
	\partial_\vp^L \partial_\t^N \partial_t^M 
H_{t}^{\ab}(\t,\vp) 
\big| 
	&\lesssim 
1 + 
\sum_{R=0,1}
\sum_{r=0,1,2} 
\bigg( \ct + \cvp \bigg)^{Rr} \\
& \qquad \qquad \times
\iint
\frac{\pia \, \piR}{(t^2 + \q)^{   \a + \b + 3\slash 2 + (L+N+M +Rr)\slash 2  }}.
\end{align*}
\item[(iv)] 
If $-1 < \a,\b < -1\slash 2$, then
\begin{align*}
\big| 
	\partial_\vp^L \partial_\t^N \partial_t^M 
H_{t}^{\ab}(\t,\vp) 
\big| 
	&\lesssim 
1 + 
\sum_{K,R=0,1}
\sum_{k,r=0,1,2} 
\bigg( \st+\svp \bigg)^{Kk} \bigg( \ct + \cvp \bigg)^{Rr} 
\\ 
& \qquad \qquad \times
\iint
\frac{\piK \, \piR}{(t^2 + \q)^{   \a + \b + 3\slash 2 + (L+N+M + Kk +Rr)\slash 2  }}.
\end{align*}
\end{itemize}
\end{cor}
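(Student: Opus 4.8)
The plan is to combine the four integral representations of $\H_t^{\ab}(\t,\vp)$ from Proposition \ref{prop:JP} with the pointwise derivative bounds of Lemma \ref{lem:EST}, keeping in mind the relation $H_t^{\ab}=\H_t^{\ab}$ when $\ab\ge -1/2$ and the decomposition \eqref{JPH} in general. First I would treat case (i): here $H_t^{\ab}=\H_t^{\ab}$ and, by Proposition \ref{prop:JP}(i), $H_t^{\ab}(\t,\vp)=\iint \Puv\,\pia\,\pib$. Since differentiation in $\t,\vp,t$ passes under the integral sign (the integrand and its derivatives are continuous and the measures $d\Pi_\a,d\Pi_\b$ are finite for $\ab\ge-1/2$), I apply Lemma \ref{lem:EST} with $K=R=0$ and $\lam=\a+\b+2$, which gives $|\partial_\vp^L\partial_\t^N\partial_t^M\Puv|\lesssim (t^2+\q)^{-(\a+\b+3/2+(L+N+M)/2)}$; integrating against $\pia\,\pib$ yields (i).

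Next I would handle case (ii), the representative ``singular'' case. By Proposition \ref{prop:JP}(ii),
$$
\H_t^{\ab}(\t,\vp)=\iint\Big\{-\partial_u\Puv\,\oa\,\pib+\Puv\,\piu\,\pib\Big\}.
$$
Differentiating in $\t,\vp,t$ under the integral sign (again legitimate, now using Lemma \ref{lem:ximes} to see that $\Pi_\a(u)\,du$ is a finite measure and that the $u$-derivative of $\Psi$ is controlled), I get two contributions. To the first I apply Lemma \ref{lem:EST} with $K=1$, $R=0$, producing factors $(\st+\svp)^{k}$, $k=0,1,2$, and powers $(t^2+\q)^{-(\a+\b+3/2+(L+N+M+k)/2)}$; I then invoke Lemma \ref{lem:ximes} to replace $|\Pi_\a(u)|\,du$ by $|u|\,d\Pi_{\a+1}(u)$, and bound $|u|\le 1$, landing exactly on the $K=1$ term in the claimed bound (with $d\Pi_{\a,K}=d\Pi_{\a+1}$). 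To the second contribution I apply Lemma \ref{lem:EST} with $K=R=0$ and integrate against $\piu\,\pib=d\Pi_{-1/2}(u)\,d\Pi_\b(v)$, which is the $K=0$ term (with $d\Pi_{\a,K}=d\Pi_{-1/2}$). For the passage from $\H_t^{\ab}$ to $H_t^{\ab}$ I use \eqref{JPH}: the extra term is a constant times $\sinh(\frac{\a+\b+1}{2}t)$, and on $t\in(0,1]$ all its $\t,\vp$-derivatives vanish while its $t$-derivatives are bounded by a constant; this accounts for the additive $1$ in (ii). Cases (iii) and (iv) are entirely analogous — (iii) by the obvious $\t\leftrightarrow$ the-cosine symmetry, applying Lemma \ref{lem:ximes} in the $v$ variable, and (iv) by applying the $u$- and $v$-replacements simultaneously to the four terms of Proposition \ref{prop:JP}(iv), with Lemma \ref{lem:EST} used with $K,R\in\{0,1\}$ according to which of $\partial_u,\partial_v$ act on each term.

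The one point requiring genuine care — the main obstacle — is justifying that the $\t$-, $\vp$- and $t$-derivatives may be taken inside the integrals in the representations of Proposition \ref{prop:JP}, uniformly for $t\in(0,1]$. For case (i) this is straightforward since $d\Pi_\a,d\Pi_\b$ are finite and the integrand is smooth with derivatives bounded (for each fixed $t>0$) by an integrable function. For cases (ii)–(iv) one must check that, after Lemma \ref{lem:ximes} converts $|\Pi_\a(u)|\,du$ into the finite measure $|u|\,d\Pi_{\a+1}(u)$, the derivative bounds from Lemma \ref{lem:EST} — which are locally bounded in $(\t,\vp,u,v)$ for each fixed $t>0$ — dominate an integrable majorant; this is immediate because $\q$ is bounded below away from $\t=\vp$ and the measures are finite, so dominated convergence applies. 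Once differentiation under the integral sign is secured, the rest is a mechanical combination of Lemma \ref{lem:EST}, Lemma \ref{lem:ximes} (to identify $d\Pi_{\a,K}$ and $d\Pi_{\b,R}$ and to absorb the factors $|u|,|v|\le 1$), and \eqref{JPH} (to produce the harmless additive constant $1$ when $\a+\b<-1$).
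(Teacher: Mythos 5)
Your proposal is correct and follows essentially the same route as the paper's (very brief) proof: combine \eqref{JPH}, the representations of Proposition \ref{prop:JP}, Lemma \ref{lem:ximes} to trade $|\Pi_{\gamma}(u)|\,du$ for $d\Pi_{\gamma+1}$, and Lemma \ref{lem:EST} with $\lam=\a+\b+2$, justifying differentiation under the integral by dominated convergence. Your identification of the additive $1$ with the $\sinh\big(\frac{\a+\b+1}{2}t\big)$ term from \eqref{JPH} is exactly the intended accounting.
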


\begin{proof}
All the bounds are direct consequences of the equality \eqref{JPH}, Proposition \ref{prop:JP}, 
Lemma \ref{lem:ximes} and the estimate from Lemma \ref{lem:EST} (specified to $\lam = \a + \b + 2$).
Here passing with the differentiation in $t$, $\t$ or $\vp$ under integrals against 
$d\Pi_{\gamma}$, $\gamma \ge -1/2$, or $\Pi_{\gamma}(u)\, du$, $-1 < \gamma < -1/2$,
is legitimate and can easily be justified with the aid of Lemma \ref{lem:EST}
and the dominated convergence theorem.
\end{proof}

\begin{lem}\label{lem:integral}
Let $\gamma \in \mathbb{R}$ and $\eta \ge 0$ be fixed. Then
\[
\int_0^1 \frac{t^\eta \,dt}{(t^2+ \rho )^{\gamma + \eta\slash 2 +  1\slash 2}}
\lesssim
\left\{ \begin{array}{lll}
\rho^{-\gamma}
, & \gamma > 0\\
\log \big(1+\rho^{-1/2} \big), & \gamma = 0 \\
1, & \gamma < 0
\end{array} \right.,
\]
uniformly in $0 < \rho \le 2$. 
\end{lem}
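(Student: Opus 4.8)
The plan is to prove the bound by splitting the range of integration at the natural scale $t \simeq \sqrt{\rho}$, since $\sqrt\rho$ is where the two terms $t^2$ and $\rho$ in the denominator change their relative size. Write $\eta+\gamma+\eta/2+1/2 = \gamma + (\eta+1)/2 + \eta/2$, or more usefully just keep the integrand as $t^\eta (t^2+\rho)^{-\gamma-\eta/2-1/2}$. Note that $\rho \le 2$, so $\sqrt\rho \le \sqrt 2 < 2$ and in particular the cut point lies inside (or at the right end of) the interval $[0,1]$ up to a constant; if $\sqrt\rho > 1$ the "small $t$" part simply is all of $[0,1]$ and the "large $t$" part is empty, which only helps.

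First I would handle the region $0 < t \le \sqrt\rho$. There the denominator satisfies $t^2+\rho \simeq \rho$, so the integrand is $\simeq t^\eta \rho^{-\gamma-\eta/2-1/2}$, and
\[
\int_0^{\sqrt\rho} t^\eta\,dt = \frac{\rho^{(\eta+1)/2}}{\eta+1} \simeq \rho^{(\eta+1)/2},
\]
which gives a contribution $\simeq \rho^{-\gamma-\eta/2-1/2}\,\rho^{(\eta+1)/2} = \rho^{-\gamma}$. This is $\lesssim \rho^{-\gamma}$ when $\gamma \ge 0$ and $\lesssim 1$ when $\gamma < 0$ (using $\rho \le 2$), matching the claimed bound in every case (for $\gamma = 0$ it is just $\lesssim 1 \le \log(1+\rho^{-1/2})$ up to a constant, since $\rho \le 2$ forces $\rho^{-1/2} \gtrsim 1$).

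Next I would handle the region $\sqrt\rho \le t \le 1$ (nonempty only when $\rho \le 1$). There $t^2 + \rho \simeq t^2$, so the integrand is $\simeq t^\eta t^{-2\gamma-\eta-1} = t^{-2\gamma-1}$, and the contribution is $\simeq \int_{\sqrt\rho}^1 t^{-2\gamma-1}\,dt$. Now one distinguishes three subcases by the sign of $\gamma$. If $\gamma > 0$, the integral is $\simeq \frac{1}{2\gamma}\big(\rho^{-\gamma} - 1\big) \lesssim \rho^{-\gamma}$. If $\gamma = 0$, it is $\int_{\sqrt\rho}^1 t^{-1}\,dt = \log(\rho^{-1/2}) \le \log(1+\rho^{-1/2})$. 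If $\gamma < 0$, it is $\simeq \frac{1}{-2\gamma}\big(1 - \rho^{-\gamma}\big) \le \frac{1}{2|\gamma|} \lesssim 1$. Combining the two regions gives the stated estimate in all three cases. There is essentially no obstacle here; the only point requiring a little care is bookkeeping the degenerate situation $\rho > 1$ (where the upper cut point would exceed $1$), but there one just uses the first region alone over all of $[0,1]$ and the bound $t^2+\rho \simeq \rho \simeq 1$, which trivially yields $\lesssim 1 \lesssim \rho^{-\gamma}$ for $\gamma \ge 0$ and $\lesssim 1$ for $\gamma < 0$ since $\rho \simeq 1$.
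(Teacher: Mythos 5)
Your proof is correct, and all three cases plus the degenerate regime $\rho>1$ are handled properly; the splitting at $t=\sqrt{\rho}$ is exactly the scale at which the two terms in the denominator exchange dominance, and each piece is computed correctly. The paper disposes of this lemma almost without proof (``this is elementary''), writing out only the borderline case $\gamma=0$, and there it takes a slightly slicker route that avoids splitting: it first uses the pointwise bound $t^{\eta}\le (t^2+\rho)^{\eta/2}$ to kill the $\eta$-dependence, reducing the integral to $\int_0^1 (t^2+\rho)^{-1/2}\,dt$, and then uses $(t^2+\rho)^{1/2}\simeq t+\rho^{1/2}$ to get an exact antiderivative equal to $\log(1+\rho^{-1/2})$. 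That pointwise reduction would shorten your argument too (it turns the integrand into $t^{-2\gamma}(t^2+\rho)^{-1/2}\cdot$\,(something $\le 1$) only in the $\gamma=0$ case, so for general $\gamma$ your two-region decomposition is the natural thing to do), and your write-up has the advantage of actually supplying the cases $\gamma>0$ and $\gamma<0$ that the paper leaves to the reader.
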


\begin{proof}
This is elementary. For $\gamma=0$, one has
\begin{equation*}
\int_0^1 \frac{t^{\eta} \, dt}{(t^2+ \rho )^{\eta/2 + 1\slash 2}} 
\le
\int_0^1 \frac{dt}{(t^2+ \rho )^{1\slash 2}} 
\simeq
\int_0^1 \frac{dt}{t+ \rho^{1/2}}
=
\log \Big( 1 + \rho^{-1/2} \Big).
\end{equation*}
\end{proof}

The next lemma will be frequently used in Section \ref{sec:kerest} to prove the relevant kernel estimates.
Only the cases $p\in \{ 1,2,\infty \}$ will be needed for our purposes.
Other values of $p$ are also of interest, but in connection with operators not considered in this paper.

\begin{lem}\label{lem:finbridge}
Let $K,R \in \{ 0,1 \}$, $k,r \in \{ 0,1,2 \}$, $W \ge 1$, $s\ge 0$ and $1 \le p \le \infty$ be fixed.
Consider a function $\Upsilon^{\ab}_s(t,\t,\vp)$ defined on $(0,1) \times [0,\pi] \times [0,\pi]$ 
in the following way.
\begin{itemize}
\item[(i)]
For $\a,\b \ge -1\slash 2$,
\begin{align*}
\Upsilon^{\ab}_s(t,\t,\vp) := 
\iint
\frac{\pia \, \pib }{(t^2 + \q)^{ \a+\b+3\slash 2 + W\slash (2p) + s\slash 2  }}.
\end{align*}
\item[(ii)]
For $-1 < \a < -1\slash 2 \le \b$,
\begin{align*}
\Upsilon^{\ab}_s(t,\t,\vp) := 
\bigg( \st + \svp \bigg)^{Kk} \iint
\frac{\piK \, \pib }{(t^2 + \q)^{ \a+\b+3\slash 2 + W\slash (2p) +Kk\slash 2 + s\slash 2  }}.
\end{align*}
\item[(iii)]
For $-1 < \b < -1\slash 2 \le \a$,
\begin{align*}
\Upsilon^{\ab}_s(t,\t,\vp) := 
\bigg( \ct + \cvp \bigg)^{Rr} \iint
\frac{\pia \, \piR }{(t^2 + \q)^{ \a+\b+3\slash 2 + W\slash (2p) +Rr\slash 2 + s\slash 2  }}.
\end{align*}
\item[(iv)] 
For $-1 < \a,\b < -1\slash 2$,
\begin{align*}
\Upsilon^{\ab}_s(t,\t,\vp) := &
\bigg( \st+\svp \bigg)^{Kk} \bigg( \ct + \cvp \bigg)^{Rr} \\
& \quad \times \iint
\frac{\piK \, \piR }{(t^2 + \q)^{ \a+\b+3\slash 2 + W\slash (2p) +Kk\slash 2 + Rr\slash 2 + s\slash 2  }}.
\end{align*}
\end{itemize}
Then the estimate
\[
\| 1 + \Upsilon^{\ab}_s(t,\t,\vp) \|_{ L^p((0,1),t^{W-1}dt) } 
\lesssim
\frac{1}{|\t-\vp|^s} \; \frac{1}{\mu_{\ab}(B(\t,|\t-\vp|))}
\]
holds uniformly in $\t,\vp \in [0,\pi]$, $\t\ne\vp$.
\end{lem}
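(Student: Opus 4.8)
The plan is to reduce all four cases to the two estimates of Lemma~\ref{lem:bridge} by integrating out the variable $t$ first. Fix $\t,\vp\in[0,\pi]$ with $\t\ne\vp$, and write $\rho = |\t-\vp|^2$, which satisfies $0<\rho\le\pi^2$; note that after a harmless rescaling we may assume $\rho\le 2$ as required by Lemma~\ref{lem:integral}. Since $\q = q(\t,\vp,u,v) \ge c\,|\t-\vp|^2 = c\rho$ uniformly in $u,v\in[-1,1]$ (as recorded in the proof of Lemma~\ref{lem:bridge}), for each fixed $u,v$ the inner $t$-integral is of the form treated in Lemma~\ref{lem:integral} with $\eta = W-1$ and $\gamma = \a+\b+\xi_1+\xi_2+1$, where $\xi_1,\xi_2\in\{0,1,\ldots\}$ collect the half-integer shifts coming from the factors $Kk/2$, $Rr/2$, $s/2$. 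The key point is that in every one of cases (i)--(iv) the exponent of $(t^2+\q)$ has exactly the shape $\gamma + \eta/(2) + 1/2$ with $\eta = W-1\ge 0$, so Lemma~\ref{lem:integral} applies verbatim once one checks that $\gamma>0$ in the generic situation and handles the borderline $\gamma\le 0$ separately.

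Carrying this out: for $p<\infty$ we raise $\Upsilon^{\ab}_s$ to the $p$-th power, which multiplies the exponent of $(t^2+\q)$ by $p$ and turns $W/(2p)$ into $W/2$; then $\int_0^1 (1+\Upsilon_s)^p\, t^{W-1}\,dt$ is estimated by Minkowski's (or just the triangle) inequality in the form $\|f+g\|_p^p \lesssim \|f\|_p^p + \|g\|_p^p$, the constant contributing a bounded term which is absorbed since the right-hand side is bounded below. For the $\Upsilon_s^p$ part, one brings the $t$-integral inside the double integral against the (finite, or locally finite after the substitutions of Lemma~\ref{lem:ximes}) measures $\piK\,\piR$ by Tonelli, applies Lemma~\ref{lem:integral} with $\rho = \q$ and $\eta = W-1$, and obtains a double integral of the form
\begin{align*}
\bigl( \st+\svp \bigr)^{Kkp}\bigl( \ct+\cvp \bigr)^{Rrp}\iint \frac{\piK\,\piR}{\q^{\,p(\a+\b+3/2) + Kkp/2 + Rrp/2 + sp/2 - W/2}},
\end{align*}
at least when the relevant $\gamma$ is positive. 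For $p=\infty$ one takes the supremum over $t\in(0,1)$ of $(t^2+\q)^{-1/2}$-type factors, which is simply $\q^{-1/2}$ up to the logarithmic correction, i.e. the same computation with $p$ formally $1$ and $W$ replaced by $0$.

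The resulting $(u,v)$-integral must now be matched against Lemma~\ref{lem:bridge}, which requires the exponent of $\q$ to be of the exact form $\a+\b+\xi_1+\xi_2+3/2$ (for the bound by $1/\mu_{\ab}(B)$) or $\a+\b+\xi_1+\xi_2+2$ (for the bound by $(|\t-\vp|\,\mu_{\ab}(B))^{-1}$), with prefactors $(\st+\svp)^{2\xi_1}(\ct+\cvp)^{2\xi_2}$ and measures $d\Pi_{\a+\xi_1+\kappa_1}$, $d\Pi_{\b+\xi_2+\kappa_2}$ with $\kappa_i\ge0$. The bookkeeping step is to observe that $d\Pi_{-1/2}$ (the $K=0$ case) is $d\Pi_{\a + \xi_1 + \kappa_1}$ with $\xi_1=0$, $\kappa_1 = -1/2-\a > 0$, while $d\Pi_{\a+1}$ (the $K=1$ case) is $d\Pi_{\a+\xi_1+\kappa_1}$ with $\xi_1=1$, $\kappa_1 = -\a - 1/2 + 1/2 = 1/2-\a$; one then has to check $\a + \xi_1 + \kappa_1 = -1/2$ in the first instance and $= \a+1 \ge -1/2$... wait — more carefully, one sets $\xi_1 = Kk/2$-adjusted so that the power of $(\st+\svp)$ matches $2\xi_1$, i.e. $\xi_1 = Kk/2$ when $Kk$ is even and one absorbs an odd leftover using $\st+\svp\lesssim 1$; similarly for the power of $\q$. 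Because $Kk\in\{0,1,2\}$ (so $Kkp$ for $p\in\{1,2\}$ stays bounded, and for $p=\infty$ one uses $p=1$), all the shifts are manageable and one reaches precisely one of the two estimates of Lemma~\ref{lem:bridge}, yielding $|\t-\vp|^{-s}\,\mu_{\ab}(B(\t,|\t-\vp|))^{-1}$ after remembering that each unit decrease in the exponent of $\q$ relative to $\a+\b+\xi_1+\xi_2+2$ costs a factor $|\t-\vp|^2$ (via $\q\gtrsim|\t-\vp|^2$), matching the $s$ in the denominator.

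The main obstacle is the parity/absorption bookkeeping in the last paragraph: making sure that for the three exponent values $Kk\in\{0,1,2\}$ (and the products with $p\in\{1,2\}$, plus the $p=\infty$ limit), together with the extra half-integer $s/2$ and the $W$-shift produced by Lemma~\ref{lem:integral}, the exponent of $\q$ can always be written in one of the two exact forms demanded by Lemma~\ref{lem:bridge} with some admissible $\xi_1,\xi_2,\kappa_1,\kappa_2\ge0$ and $\a+\xi_1+\kappa_1,\b+\xi_2+\kappa_2\ge-1/2$; the borderline cases $\gamma\le0$ of Lemma~\ref{lem:integral}, where one only gets a bounded or logarithmic factor in $t$ rather than a power of $\q$, need a separate short argument showing the $(u,v)$-integral is then controlled by a constant, which is fine since the right-hand side is bounded below.
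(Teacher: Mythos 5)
Your overall architecture matches the paper's: strip off the $1$ and the factor $|\t-\vp|^{-s}$ using $\q\gtrsim|\t-\vp|^2$, integrate out $t$ via Lemma \ref{lem:integral}, and feed the resulting $(u,v)$-integral into Lemma \ref{lem:bridge}. But there are genuine gaps in the execution. First, the interchange step is wrong as stated: Tonelli cannot move the $t$-integration inside the $p$-th power of the double integral, since $\big(\iint g\big)^p\neq\iint g^p$. What is needed (and what the paper uses) is Minkowski's \emph{integral} inequality, $\|\iint F\,\piK\,\piR\|_{L^p(t^{W-1}dt)}\le\iint\|F\|_{L^p(t^{W-1}dt)}\,\piK\,\piR$. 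This matters because it changes the outcome: after taking the $1/p$-th root of the inner $t$-integral, the $(u,v)$-integrand is $\q^{-(\a+\b+3/2+Kk/2+Rr/2)}$ (plus a constant and a $\log^{1/p}$ term), \emph{not} the $p$-multiplied exponent $p(\a+\b+3/2)+Kkp/2+Rrp/2-W/2$ in your display. Your displayed $(u,v)$-integral does not have the exact form $\a+\b+\xi_1+\xi_2+3/2$ or $\a+\b+\xi_1+\xi_2+2$ required by Lemma \ref{lem:bridge}, so the final matching step fails as written.

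Second, the ``parity/absorption bookkeeping'' you flag as the main obstacle is in fact a non-issue, and the hack you propose (discarding an odd leftover power of $\st+\svp$) would destroy the exponent matching. Lemma \ref{lem:bridge} allows half-integer $\xi_i$: one simply takes $\xi_1=Kk/2$, with $\kappa_1=-\a-1/2$ when $K=0$ (so $\a+\xi_1+\kappa_1=-1/2$, matching $d\Pi_{-1/2}$) and $\kappa_1=1-k/2\ge0$ when $K=1$ (so $\a+\xi_1+\kappa_1=\a+1$, matching $d\Pi_{\a+1}$), and symmetrically for $\xi_2,\kappa_2$; then both the prefactor $(\st+\svp)^{2\xi_1}=(\st+\svp)^{Kk}$ and the exponent of $\q$ line up exactly with the first estimate of Lemma \ref{lem:bridge}. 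Finally, in the borderline case $\gamma=0$ of Lemma \ref{lem:integral} (which occurs only for $-1<\a,\b<-1/2$ with $Kk=Rr=0$) the output is $\log(1+\q^{-1/2})^{1/p}\lesssim\log(1+|\t-\vp|^{-1})^{1/p}$, which is \emph{not} a constant; boundedness below of the right-hand side is not enough here, and one must additionally use that $\mu_{\ab}(B(\t,|\t-\vp|))\lesssim|\t-\vp|^{\varepsilon}$ for some $\varepsilon=\varepsilon(\ab)>0$ to absorb the logarithm.
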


\begin{proof}
It is enough to prove the desired estimate without the term $1$ in the left-hand side.
Further, since $|\t-\vp|^2 \lesssim \q$, it suffices to consider the case $s=0$. 
We prove the estimate when $-1 <\ab<-1 \slash 2$. The remaining cases  
are left to the reader; they are simpler since then  $\a+\b+3\slash 2 > 0$ and one needs 
Lemma \ref{lem:integral} only with $\gamma > 0$.

We first assume that $p<\infty$. Using Minkowski's integral inequality and then Lemma \ref{lem:integral} 
with $\gamma = p(\a+\b+3\slash 2 + Kk\slash 2 + Rr\slash 2)$, $\eta= W-1$ and $\rho=\q$, we obtain
\begin{align*}
	&\| \Upsilon^{\ab}_0(t,\t,\vp) \|_{ L^p((0,1),t^{W-1}dt) } \\
	&\quad \le
\bigg( \st+\svp \bigg)^{Kk} \bigg( \ct + \cvp \bigg)^{Rr} 
 \iint \Bigg(  \int_0^1
\frac{t^{W-1} \, dt }{(t^2 + \q)^{ p(\a+\b+3\slash 2 + W\slash (2p) +Kk\slash 2 + Rr\slash 2)  }} 
\Bigg)^{1\slash p}\\
	& \qquad \qquad \qquad  \times \piK \, \piR \\
	&\quad \lesssim
\bigg( \st+\svp \bigg)^{Kk} \bigg( \ct + \cvp \bigg)^{Rr} 
 \iint \bigg[
\bigg( \frac{1}{\q}  \bigg)^{\a+\b+3\slash 2 + Kk\slash 2 + Rr\slash 2}+1 \\
	& \qquad \qquad \qquad 
 + \Big( \log \big( 1 + {\q}^{-1/2} \big) \Big)^{1\slash p}
 \bigg]\,
\piK \, \piR.
\end{align*}
Now an application of Lemma \ref{lem:bridge} 
(specified to $\xi_1=Kk\slash 2$, 
$\kappa_1=-\a-1\slash 2$ if $K=0$ and $\kappa_1=1 - k\slash 2$ if $K=1$, $\xi_2=Rr\slash 2$, $\kappa_2=-\b-1\slash 2$ if $R=0$ and $\kappa_2=1 - r\slash 2$ if $R=1$) 
gives the desired estimate for the expression emerging from the first term in the last integral.
As for the remaining two expressions, we observe that 
$1 \lesssim \log \big( 1 + \q^{-1/2}\big) \lesssim \log \big( 1 + |\t-\vp|^{-1} \big)$. 
Moreover, as can be seen from \eqref{ball}, there exists an $\varepsilon = \varepsilon(\ab) > 0$ such that
\[
\mu_{\ab}\big( B(\t,|\t - \vp|) \big) 
	\lesssim |\t - \vp|^\varepsilon, \qquad \t,\vp \in [0,\pi]. 
\]
Since the measures $d\abpiK$ and $d\abpiR$ are finite, the conclusion follows.

The case $p=\infty$ can be justified in a similar way by using in the reasoning above the estimate
\[
\frac{1}{ (t^2 + \q)^{ \a+\b+3\slash 2 +Kk\slash 2 + Rr\slash 2} }
\lesssim
\bigg(\frac{1}{\q}\bigg)^{ \a+\b+3\slash 2 +Kk\slash 2 + Rr\slash 2}  +1, \qquad t \in (0,1),
\]
instead of Lemma \ref{lem:integral}.
\end{proof}

The next lemma and corollaries are long-time counterparts of Corollary \ref{cor:t1} 
and Lemma \ref{lem:finbridge}.
\begin{lem} \label{lem:tL}
Assume that $M,N \in \N$ and $L \in \{0,1\}$ are fixed. Given $\ab > -1$, there exists an
$\epsilon = \epsilon(\ab)>0$ such that
\begin{align*}
& \big| \partial_{\vp}^{L} \partial_{\t}^N \partial_t^M H_t^{\ab}(\t,\vp)\big| \\ & \quad \lesssim
e^{- t \left( \left| \frac{\a + \b + 1}{2}  \right| + \epsilon \right)} 
+ \chi_{\{N=L=0, \, \a+\b+1 \ne 0\}} e^{- t  \left| \frac{\a + \b + 1}{2} \right|}
+ \chi_{\{M=N=L=0, \, \a+\b+1=0\}},
\end{align*}
uniformly in $t \ge 1$ and $\t,\vp \in [0,\pi]$. Moreover, one can take 
$\epsilon = (\a+\b+2) \wedge 1$.
\end{lem}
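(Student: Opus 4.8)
The plan is to exploit the decomposition \eqref{JPH} together with the explicit formula \eqref{HF4} for the auxiliary kernel $\H_t^{\ab}$, which behaves far better for large $t$ than the integral representations of Section \ref{sec:JP}. First I would treat $\H_t^{\ab}(\t,\vp)$ directly. Writing $\zeta = \cosh\frac t2$, for $t \ge 1$ we have $\zeta \simeq e^{t/2}$, and the arguments of $F_4$ in \eqref{HF4} are each $\mathcal O(e^{-t})$; hence the hypergeometric series is $1 + \mathcal O(e^{-t})$ and we get the pointwise bound $\H_t^{\ab}(\t,\vp) \simeq \sinh\frac t2 \, \zeta^{-(\a+\b+2)} \simeq e^{-t(\a+\b+1)/2}$. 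Since $\a+\b+2 > 0$ always, and differentiation in $t$, $\t$ or $\vp$ is easily handled — each $\t$- or $\vp$-derivative produces a factor bounded by a constant (the derivatives of $\st\svp$ and $\ct\cvp$ are bounded), while each $t$-derivative of $\sinh\frac t2 \, \zeta^{-(\a+\b+2)}$ again yields a term $\simeq e^{-t(\a+\b+1)/2}$ — we obtain for all $M, N, L$
\[
\big| \partial_\vp^L \partial_\t^N \partial_t^M \H_t^{\ab}(\t,\vp) \big|
	\lesssim e^{-t(\a+\b+1)/2}, \qquad t \ge 1.
\]
This needs a little care when $\a+\b+1 \le 0$: then $e^{-t(\a+\b+1)/2}$ does not decay, but it equals $e^{-t|\a+\b+1|/2}$ only when $\a+\b+1 = 0$; when $-1 < \a+\b+1 < 0$ we have $e^{-t(\a+\b+1)/2} = e^{t|\a+\b+1|/2}$, which grows, so the bound above is not yet of the claimed form and must be combined with the second term of \eqref{JPH}.

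Next I would combine with the constant-in-$(\t,\vp)$ term $\chi_{\{\a+\b<-1\}}\, 2^{\a+\b+2} c_{\ab}\sinh\big(\frac{\a+\b+1}{2}t\big)$ from \eqref{JPH}. When $\a+\b+1 < 0$, its $t$-derivatives are $\simeq e^{t|\a+\b+1|/2}$, while for $M \ge 1$, $N = L = 0$ the $t$-derivative of this term is $\simeq \partial_t^M \sinh(\cdot t) \simeq e^{t|\a+\b+1|/2}$, of exactly the same size as the bad part of $\partial_t^M \H_t^{\ab}$. The key point — the "important cancellations" alluded to after \eqref{JPH} — is that the leading growing exponentials cancel. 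Concretely, expand $\H_t^{\ab}$ via \eqref{HF4}: the $F_4$ series is $1$ plus a tail of size $\mathcal O(e^{-t})$, and $\sinh\frac t2\,\zeta^{-(\a+\b+2)} = c_{\ab}^{-1}\cdot\frac{1}{2}\,2^{\a+\b+2}c_{\ab}(e^{-t(\a+\b+1)/2} - e^{-t(\a+\b+3)/2})$ up to lower-order terms, so the leading term of $c_{\ab}\sinh\frac t2\,\zeta^{-(\a+\b+2)}$ is $2^{\a+\b+1}c_{\ab}\,e^{-t(\a+\b+1)/2}$; meanwhile $2^{\a+\b+2}c_{\ab}\sinh\big(\frac{\a+\b+1}{2}t\big) = 2^{\a+\b+1}c_{\ab}(e^{t(\a+\b+1)/2} - e^{-t(\a+\b+1)/2})$, and since $\a+\b+1 < 0$ the growing piece is $-2^{\a+\b+1}c_{\ab}\,e^{-t(\a+\b+1)/2}$, which exactly cancels the leading term of $\H_t^{\ab}$. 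What remains in $H_t^{\ab}$ is $2^{\a+\b+1}c_{\ab}\,e^{t(\a+\b+1)/2} = 2^{\a+\b+1}c_{\ab}\,e^{-t|\a+\b+1|/2}$ (a genuinely decaying constant-in-$(\t,\vp)$ term, constant in $\t,\vp$, hence killed by any $\t$- or $\vp$-derivative, and reproduced up to constants by any number of $t$-derivatives) plus a remainder of size $\mathcal O\big(e^{-t(\a+\b+3)/2} + e^{-t}\cdot e^{-t(\a+\b+1)/2}\big)$. The exponent in the remainder is $\le -\tfrac t2\big((\a+\b+1) + \min(2, 1)\big) = -\tfrac t2(\a+\b+1) - \tfrac t2$; writing it as $-t\big(\tfrac{\a+\b+1}{2} + \tfrac12\big)$ and noting $\tfrac{\a+\b+1}{2} + \tfrac12 \ge -\tfrac{|\a+\b+1|}{2} + \big((\a+\b+2)\wedge 1\big)$ (indeed $\tfrac{\a+\b+1}{2} + \tfrac12 = \tfrac{\a+\b+2}{2}$ and one checks $\tfrac{\a+\b+2}{2} + \tfrac{|\a+\b+1|}{2} \ge (\a+\b+2)\wedge 1$ in both sign regimes for $\a+\b+1$), we get the remainder bounded by $e^{-t(|\a+\b+1|/2 + \epsilon)}$ with $\epsilon = (\a+\b+2)\wedge 1$. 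Thus, after differentiation, $|\partial_\vp^L\partial_\t^N\partial_t^M H_t^{\ab}|$ is bounded by: the surviving decaying constant term, which contributes $e^{-t|\a+\b+1|/2}$ but only when $N = L = 0$ (and when $\a+\b+1 = 0$ only when $M = N = L = 0$, since otherwise the $t$-derivative of a constant vanishes — here $\a+\b+1 = 0$ forces $\a+\b+2 = 1$, so actually $2^{\a+\b+1}c_{\ab}e^{-t|\a+\b+1|/2} = 2^0 c_{\ab}$ is a true constant, killed by every derivative, which is why the third indicator in the statement has $M = N = L = 0$); plus the remainder $e^{-t(|\a+\b+1|/2+\epsilon)}$. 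When $\a+\b \ge -1$ the second term of \eqref{JPH} is absent, $H_t^{\ab} = \H_t^{\ab}$, $|\a+\b+1|/2 = (\a+\b+1)/2$ and $\epsilon = (\a+\b+2)\wedge 1$ works directly from the first display; one just checks that for $N = L = 0$ one may alternatively keep the weaker bound $e^{-t(\a+\b+1)/2}$, matching the second indicator. Assembling these cases gives exactly the claimed estimate.

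The main obstacle is the cancellation argument in the regime $-1 < \a + \b < -1$ (i.e. $-1 < \a+\b+1 < 0$): one must extract from \eqref{HF4} not merely the leading term of $\H_t^{\ab}$ but also control the next-order remainder uniformly in $\t, \vp$ and under all the derivatives $\partial_\vp^L\partial_\t^N\partial_t^M$, and verify that the leading exponentials cancel against $\sinh(\tfrac{\a+\b+1}{2}t)$ with the correct constants. The cleanest route is to write $\H_t^{\ab}(\t,\vp) = c_{\ab}\sinh\tfrac t2 \big(\cosh\tfrac t2\big)^{-(\a+\b+2)} \big(1 + G^{\ab}_t(\t,\vp)\big)$ where $G^{\ab}_t = F_4(\cdots) - 1$, establish $|\partial_\vp^L\partial_\t^N\partial_t^M G^{\ab}_t| \lesssim e^{-t}$ for $t \ge 1$ by differentiating the $F_4$ series term by term (each term carries a factor $\le (e^{-t})^{m+n}$ with $m + n \ge 1$, and $\t,\vp$-derivatives stay bounded while $t$-derivatives of $(\cosh\tfrac t2)^{-2(m+n)}$ only improve the decay), expand $c_{\ab}\sinh\tfrac t2(\cosh\tfrac t2)^{-(\a+\b+2)}$ in powers of $e^{-t/2}$ keeping the first two exponentials explicitly, and then add the constant term of \eqref{JPH} and collect. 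Once this bookkeeping is done, and one records that differentiation in $\t$ or $\vp$ annihilates every purely $t$-dependent term while differentiation in $t$ preserves (up to constants) the size of every exponential $e^{-ct}$, all four indicator-pattern cases in the statement fall out, and the claim $\epsilon = (\a+\b+2)\wedge 1$ is read off from the exponent $\tfrac{\a+\b+2}{2}$ of the worst remaining term compared with $|\a+\b+1|/2$.
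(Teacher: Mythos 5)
Your argument is essentially correct, but it follows a genuinely different route from the paper. The paper proves this lemma from the eigenfunction series \eqref{LTdec}: it invokes the crude polynomial bound $|\P_n^{\ab}(\t)|\lesssim n^{\a+\b+2}$ from Szeg\H{o} together with the derivative identity $\partial_\t\P_n^{\ab} = -\tfrac12\sqrt{n(n+\a+\b+1)}\sin\t\,\P_{n-1}^{\a+1,\b+1}$, differentiates the series term by term, and reads off $\epsilon=(\a+\b+2)\wedge 1$ from the $n=1$ term $e^{-t(1+\frac{\a+\b+1}{2})}$; the constant first term of \eqref{LTdec} accounts for the two indicator terms. You instead work from Bailey's closed form \eqref{HF4}, expand $F_4=1+\mathcal O(e^{-t})$, and make the ``important cancellations'' remark after \eqref{JPH} explicit: the growing exponential $2^{\a+\b+1}c_{\ab}e^{-t(\a+\b+1)/2}$ in $\H_t^{\ab}$ cancels exactly against the $\sinh$ correction, leaving $2^{\a+\b+1}c_{\ab}e^{-t|\a+\b+1|/2}$ (which, reassuringly, equals $e^{-t|\a+\b+1|/2}/\mu_{\ab}([0,\pi])$, the first term of \eqref{LTdec}) plus a remainder of order $e^{-t(\a+\b+3)/2}$; your exponent bookkeeping for $\epsilon$ and for all four indicator patterns is correct. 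Your approach buys more: it identifies the exact leading constant and explains the cancellation structurally, whereas the paper's series argument is more elementary and sidesteps any analysis of $F_4$.

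One step of yours needs more care than you give it. You justify $|F_4-1|\lesssim e^{-t}$ (and likewise for its derivatives) by saying each term carries a factor $(e^{-t})^{m+n}$. That alone does not suffice: the coefficients $(a_1)_{m+n}(a_2)_{m+n}/[(b_1)_m(b_2)_n\,m!\,n!]$ grow geometrically, roughly like $4^{m+n}$ along the diagonal $m\approx n$, so the naive majorant $\sum |c_{mn}|(4e^{-t})^{m+n}$ diverges for $t$ close to $1$. The claim is nevertheless true because the two arguments of $F_4$ satisfy the joint constraint $\sqrt{x}+\sqrt{y}\le \cos\frac{\t-\vp}{2}\,/\cosh\frac{t}{2}\le \operatorname{sech}\frac12<1$, uniformly for $t\ge 1$; rescaling $(x,y)=(\tau^2x',\tau^2y')$ with $\sqrt{x'}+\sqrt{y'}$ fixed strictly below $1$ and $\tau\lesssim e^{-t/2}$ gives the tail bound $\mathcal O(\tau^2)=\mathcal O(e^{-t})$, and the same device (with polynomial losses in $m+n$ absorbed by slightly enlarging the fixed radius) handles the differentiated series, since each $\t$-, $\vp$- or $t$-derivative of $x^my^n(\cosh\frac t2)^{-\cdots}$ is again $\mathcal O(\mathrm{poly}(m,n)\,\operatorname{sech}^{2(m+n)}\frac t2)$. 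With this repair your proof is complete.
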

To prove this, it is more convenient to employ the series representation of $H_t^{\ab}(\t,\vp)$
rather than the formulas from Proposition \ref{prop:JP}.
\begin{proof}[{Proof of Lemma \ref{lem:tL}}]
For $\ab>-1$, $t>0$ and $\t,\vp \in [0,\pi]$ we have
\begin{equation} \label{LTdec}
H_t^{\ab}(\t,\vp) = \frac{1}{\mu_{\ab}([0,\pi])} e^{-t\left|\frac{\a+\b+1}{2}\right|}
+ \sum_{n=1}^{\infty} e^{-t\left(n+\frac{\a+\b+1}2\right)} \P_n^{\ab}(\t) \P_n^{\ab}(\vp).
\end{equation}
Denote the sum in \eqref{LTdec} by $S$.
To estimate $S$ and its derivatives, we will need suitable bounds for
$\partial_{\t}^{N}\P_n^{\ab}(\t)$, $N \ge 0$. It is known (see \cite[(7.32.2)]{Sz}) that
\begin{equation} \label{Jpolb}
|\P_n^{\ab}(\t)| \lesssim n^{\a+\b+2}, \qquad \t \in [0,\pi], \quad n \ge 1.
\end{equation}
Combining this with the identity (cf. \cite[(4.21.7)]{Sz})
$$
\partial_{\t} \P_n^{\ab}(\t) = -\frac{1}{2} \sqrt{n(n+\a+\b+1)} \sin\t \, \P_{n-1}^{\a+1,\b+1}(\t),
\qquad n \ge 1,
$$
we see that for each $N \ge 0$
$$
|\partial_{\t}^N \P_n^{\ab}(\t)| \lesssim n^{3N+\a+\b+2}, \qquad \t \in [0,\pi], \quad n \ge 1.
$$

In view of these facts, the series in \eqref{LTdec} can be repeatedly differentiated 
term by term in $t,\t$ and $\vp$, and we get the bounds
\begin{align*}
\partial_{\vp}^{L} \partial_{\t}^N \partial_t^M S & \lesssim 
\sum_{n=1}^{\infty} e^{-t\left(n+\frac{\a+\b+1}2\right)} n^{M+3N+3L+2\a+2\b+4} \\
& = e^{-t\left( \left|\frac{\a+\b+1}2\right| + (\a+\b+2)\wedge 1 \right)} \sum_{n=1}^{\infty} 
	e^{-t\left(n - 1\right)} n^{M+3N+3L+2\a+2\b+4} \\
& \lesssim e^{-t\left( \left|\frac{\a+\b+1}2\right| + (\a+\b+2)\wedge 1 \right)},
\end{align*}
uniformly in $t \ge 1$ and $\t,\vp \in [0,\pi]$. 

Since the other term in \eqref{LTdec} is trivial to handle, the conclusion follows.
\end{proof}

\begin{cor} \label{cor:tL}
Let $\ab > -1$, $M,N \in \N$, $L \in \{0,1\}$, $W \ge 1$ and $1 \le p \le \infty$ be fixed.
Then
$$
\bigg\|\sup_{\t,\vp \in [0,\pi]} \big|\partial_{\vp}^{L} \partial_{\t}^N \partial_t^M H_t^{\ab}(\t,\vp) \big|
	\bigg\|_{L^p((1,\infty),t^{W-1}dt)} < \infty,
$$
excluding the cases when simultaneously $\a+\b+1=0$ and $M=N=L=0$ and $p<\infty$.
\end{cor}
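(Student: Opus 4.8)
The plan is to deduce Corollary \ref{cor:tL} directly from the pointwise bounds in Lemma \ref{lem:tL} by integrating in $t$ over $(1,\infty)$. First I would take the supremum over $\t,\vp \in [0,\pi]$ on the right-hand side of Lemma \ref{lem:tL}; since all three terms there are independent of $\t$ and $\vp$ (they only involve $t$, $\ab$, $M$, $N$, $L$ and the various characteristic functions), the supremum costs nothing and we are left with the task of showing that
\[
e^{- t \left( \left| \frac{\a + \b + 1}{2}\right| + \epsilon \right)}
+ \chi_{\{N=L=0,\, \a+\b+1\ne 0\}}\, e^{- t \left| \frac{\a + \b + 1}{2}\right|}
+ \chi_{\{M=N=L=0,\, \a+\b+1=0\}}
\]
belongs to $L^p((1,\infty), t^{W-1}\,dt)$, with $\epsilon = (\a+\b+2)\wedge 1 > 0$.

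The first term is handled trivially: for any fixed $c>0$ (here $c = |\tfrac{\a+\b+1}{2}| + \epsilon$, which is strictly positive because $\epsilon>0$, \emph{even when $\a+\b+1=0$}), the function $t \mapsto t^{W-1} e^{-ct}$ is integrable on $(1,\infty)$, so $e^{-ct} \in L^p((1,\infty),t^{W-1}\,dt)$ for every $1\le p\le\infty$; the case $p=\infty$ is immediate since the function is bounded. The second term is likewise fine \emph{under its defining restriction} $\a+\b+1\ne 0$: then $|\tfrac{\a+\b+1}{2}|>0$, so again $t\mapsto t^{W-1}e^{-t|\frac{\a+\b+1}{2}|}$ is integrable on $(1,\infty)$ and bounded, hence in $L^p$ for all $p$. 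The third term is the constant function $1$ on $(1,\infty)$, which lies in $L^\infty((1,\infty),t^{W-1}\,dt)$ but fails to be in $L^p$ for any $p<\infty$ because $\int_1^\infty t^{W-1}\,dt = \infty$ when $W\ge 1$; this term is present precisely when $\a+\b+1=0$ and $M=N=L=0$, which is exactly the excluded case in the statement.

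Putting these pieces together: outside the excluded configuration, the third characteristic function vanishes, so the pointwise bound is dominated by $C\, e^{-ct}$ for some $c>0$ (absorbing the second term into a slightly larger exponent if needed, or simply keeping it as a separate exponentially decaying summand), and this is in $L^p((1,\infty),t^{W-1}\,dt)$ for every $1\le p\le\infty$; in the excluded configuration but with $p=\infty$, all three terms are bounded and hence the sup-norm over $t\ge 1$ is finite. The main point to be careful about — really the only subtlety — is the bookkeeping of the characteristic functions: one must verify that whenever the non-decaying constant term $\chi_{\{M=N=L=0,\,\a+\b+1=0\}}$ is nonzero we are indeed in the case the corollary excludes (for $p<\infty$), and that in all other cases the surviving terms decay exponentially in $t$. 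There is no analytic obstacle here; the result is an immediate consequence of Lemma \ref{lem:tL} together with the elementary fact that exponentials decay fast enough to beat the polynomial weight $t^{W-1}$ on $(1,\infty)$.
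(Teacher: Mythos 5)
Your argument is correct and is exactly the intended deduction: the paper states Corollary \ref{cor:tL} without proof as an immediate consequence of the uniform-in-$(\t,\vp)$ bounds of Lemma \ref{lem:tL}, and your case analysis of the three terms (including the observation that the constant term survives precisely in the excluded configuration and only fails for $p<\infty$) is the complete justification.
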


A strengthened special case of Corollary \ref{cor:tL} will be needed when we estimate kernels
associated with multipliers of Laplace-Stieltjes type.
\begin{cor}\label{cor:tLStieltjes}
Let $\ab > -1$ and 
$L,N \in \{0,1\}$ be fixed.
Then
$$
\bigg\| e^{t \left| \frac{\a + \b + 1}{2} \right|} \sup_{\t,\vp \in [0,\pi]} 
	\big|\partial_{\vp}^{L} \partial_{\t}^N H_t^{\ab}(\t,\vp) \big|
	\bigg\|_{L^\infty((1,\infty),dt)} < \infty.
$$
\end{cor}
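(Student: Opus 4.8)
The plan is to extract this corollary directly from the proof of Lemma \ref{lem:tL}, tracking more carefully the exponential rate. Recall the decomposition \eqref{LTdec}, $H_t^{\ab}(\t,\vp) = \mu_{\ab}([0,\pi])^{-1} e^{-t|\frac{\a+\b+1}{2}|} + S$, where $S = \sum_{n\ge 1} e^{-t(n+\frac{\a+\b+1}{2})} \P_n^{\ab}(\t)\P_n^{\ab}(\vp)$. First I would multiply through by $e^{t|\frac{\a+\b+1}{2}|}$. The contribution of the first term becomes the constant $\mu_{\ab}([0,\pi])^{-1}$ after differentiation only when $L=N=0$ (and it vanishes if $L$ or $N$ is positive, since the term is independent of $\t,\vp$); in any case this term and its $\t,\vp$-derivatives are bounded uniformly in $t\ge 1$, hence lie in $L^\infty((1,\infty),dt)$ after multiplication by $e^{t|\frac{\a+\b+1}{2}|}$, which is exactly what is claimed.

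For the series $S$, I would use the same term-by-term differentiation justified in the proof of Lemma \ref{lem:tL}, together with the polynomial bound $|\partial_{\t}^N\P_n^{\ab}(\t)|\lesssim n^{3N+\a+\b+2}$ (valid for $N\in\{0,1\}$ here, with a harmless extra power), to obtain
\begin{align*}
e^{t\left|\frac{\a+\b+1}{2}\right|}\big|\partial_{\vp}^L\partial_{\t}^N S\big|
&\lesssim e^{t\left|\frac{\a+\b+1}{2}\right|}\sum_{n=1}^{\infty} e^{-t\left(n+\frac{\a+\b+1}{2}\right)} n^{3N+3L+2\a+2\b+4}\\
&= \sum_{n=1}^{\infty} e^{-t\left(n+\frac{\a+\b+1}{2}-\left|\frac{\a+\b+1}{2}\right|\right)} n^{3N+3L+2\a+2\b+4},
\end{align*}
uniformly in $\t,\vp\in[0,\pi]$. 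The exponent of $e$ is $-t(n + \frac{\a+\b+1}{2} - |\frac{\a+\b+1}{2}|)$, which equals $-tn$ if $\a+\b+1\ge 0$ and $-t(n-1)-t|\a+\b+1|\le -t(n-1)$ if $\a+\b+1<0$. In either case, for $t\ge 1$ the series is dominated by $\sum_{n\ge 1} e^{-(n-1)} n^{3N+3L+2\a+2\b+4}$, a convergent constant. Hence $e^{t|\frac{\a+\b+1}{2}|}\sup_{\t,\vp}|\partial_{\vp}^L\partial_{\t}^N S|$ is bounded uniformly in $t\ge 1$, so belongs to $L^\infty((1,\infty),dt)$.

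Combining the two contributions gives the assertion. There is no real obstacle here; the only point requiring a little care is to check that after multiplying by $e^{t|\frac{\a+\b+1}{2}|}$ the summands' exponential factor still has a uniformly negative rate in $n$ once $t\ge 1$, which is precisely the computation above (the borderline case $n=1$ together with $\a+\b+1<0$ contributes $e^{-t|\a+\b+1|}\le 1$, still bounded). I would remark that, unlike Corollary \ref{cor:tL}, no case needs to be excluded and no $M$-derivative is taken, so the bottom-eigenvalue issue $\a+\b+1=0$ causes no trouble: there the first term in \eqref{LTdec} is simply the constant $\mu_{\ab}([0,\pi])^{-1}$, and $e^{t|\frac{\a+\b+1}{2}|}=1$.
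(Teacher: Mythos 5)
Your argument is correct and is essentially the paper's: the corollary is obtained from the series representation \eqref{LTdec} exactly as in the proof of Lemma \ref{lem:tL}, the only additional observation being that after multiplying by $e^{t\left|\frac{\a+\b+1}{2}\right|}$ the $n$th term still carries the factor $e^{-t(n+\a+\b+1-1)}e^{-t(\a+\b+2-1)}\cdots$, i.e.\ an exponent $-t\big(n+\frac{\a+\b+1}{2}-\left|\frac{\a+\b+1}{2}\right|\big)\le -t(n-1)$ because $\a+\b+2>0$, while the lowest eigenvalue term contributes a bounded constant. One harmless arithmetic slip: when $\a+\b+1<0$ the exponent equals $-t(n-1)-t(\a+\b+2)=-t(n-1)-t\big(1-|\a+\b+1|\big)$, not $-t(n-1)-t|\a+\b+1|$, but the inequality $\le -t(n-1)$ that you actually use (and the bound $\le 1$ for $n=1$) remains valid.
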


\section{Kernel estimates} \label{sec:kerest}

Let $\mathbb{B}$ be a Banach space and let $K(\t,\vp)$ be a kernel defined on 
$[0,\pi]\times[0,\pi]\backslash \{ (\t,\vp):\t=\vp \}$ and taking values in $\mathbb{B}$.
We say that $K(\t,\vp)$ is a \emph{standard kernel} in the sense of the space of homogeneous type
$([0,\pi], d\mu_{\ab},|\cdot|)$ if it satisfies the so-called \emph{standard estimates}, i{.}e{.},\
the growth estimate
\begin{equation} \label{gr}
\|K(\t,\vp)\|_{\mathbb{B}} \lesssim \frac{1}{\mu_{\ab}(B(\t,|\t-\vp|))}
\end{equation}
and the smoothness estimates
\begin{align}
\| K(\t,\vp)-K(\t',\vp)\|_{\mathbb{B}} & \lesssim \frac{|\t-\t'|}{|\t-\vp|}\,
 \frac{1}{\mu_{\ab}(B(\t,|\t-\vp|))},
\qquad |\t-\vp|>2|\t-\t'|, \label{sm1}\\
\| K(\t,\vp)-K(\t,\vp')\|_{\mathbb{B}} & \lesssim \frac{|\vp-\vp'|}{|\t-\vp|}\,
 \frac{1}{\mu_{\ab}(B(\t,|\t-\vp|))},
\qquad |\t-\vp|>2|\vp-\vp'| \label{sm2}.
\end{align}
Notice that in these formulas, the ball (interval) $B(\t,|\t-\vp|)$ can be replaced by $B(\vp,|\vp-\t|)$, 
in view of the doubling property of $\mu_{\ab}$.

We will show that the following kernels, with values in properly chosen Banach spaces $\mathbb{B}$, 
satisfy the standard estimates.
\begin{itemize}
\item[(I)] The kernel associated with the Jacobi-Poisson semigroup maximal operator,
$$
\mathfrak{H}^{\ab}(\t,\vp) = \big\{H_t^{\ab}(\t,\vp)\big\}_{t>0}, 
	\qquad \mathbb{B}=\mathbb{X} \subset L^{\infty}(dt),
$$
where $\mathbb{X}$ is the closed separable subspace
of $L^{\infty}(dt)$ consisting of all continuous functions $f$ on $(0,\infty)$ which have finite
limits as $t \to 0^+$ and as $t \to \infty$. Observe that $\big\{H_t^{\ab}(\t,\vp)\big\}_{t>0} 
\in \mathbb{X}$, for $\t \ne \vp$, as can be seen from Proposition \ref{prop:JP} and the bound 
$\q \gtrsim (\t-\vp)^2$, and the series representation (see the proof of Lemma \ref{lem:tL}).
\item[(II)] The kernels associated with Riesz-Jacobi transforms,
$$
R_N^{\ab}(\t,\vp) = \frac{1}{\Gamma(N)} \int_0^{\infty} \partial_\t^N H_t^{\ab}(\t,\vp)
	t^{N -1}\, dt, \qquad \mathbb{B}=\mathbb{C},
$$
where $N = 1,2,\ldots$. 
\item[(III)] The kernels associated with mixed square functions,
$$
\mathfrak{G}^{\ab}_{M,N}(\t,\vp) = \big\{\partial_\t^N \partial_t^M H_t^{\ab}(\t,\vp) \big\}_{t>0}, \qquad
	\mathbb{B} = L^2(t^{2M+2N-1}dt),
$$
where $M,N = 0,1,2,\ldots$ are such that $M+N>0$.
\item[(IVa)] The kernels associated with Laplace transform type multipliers,
$$
K^{\ab}_{\phi}(\t,\vp) = - \int_0^{\infty} \phi(t) \, \partial_t H_t^{\ab}(\t,\vp)
\, dt, \qquad
	\mathbb{B}=\mathbb{C},
$$
where $\phi \in L^{\infty}(dt)$.
\item[(IVb)] The kernels associated with Laplace-Stieltjes transform type multipliers,
$$
K^{\ab}_{\nu}(\t,\vp) =  \int_{(0,\infty)} H_t^{\ab}(\t,\vp)\, d\nu(t), \qquad
	\mathbb{B}=\mathbb{C},
$$
where $\nu$ is a signed or complex Borel measure on $(0,\infty)$ 
with total variation $|\nu|$ satisfying 
\begin{equation}\label{assum}
\int_{(0,\infty)} e^{-t \left| \frac{\a + \b + 1}{2} \right| } \, d|\nu|(t) < \infty.
\end{equation}
\end{itemize}

When $K(\t,\vp)$ is scalar-valued, i.e.\ $\mathbb{B}=\mathbb{C}$, it is well known that the bounds \eqref{sm1}
and \eqref{sm2} follow from the more convenient gradient estimate
\begin{equation} \label{grad}
{\|\partial_{\t} K(\t,\vp)\|}_{\mathbb{B}} + {\|\partial_{\vp} K(\t,\vp)\|}_{\mathbb{B}}
\lesssim \frac{1}{|\t-\vp|\mu_{\ab}(B(\t,|\t-\vp|))}.
\end{equation}
We shall see that the same holds also in the vector-valued cases we consider. Then the derivatives in
\eqref{grad} are taken in the weak sense, which means that for any $\texttt{v}\in \mathbb{B}^*$
\begin{equation}\label{wder}
\big \langle \texttt{v}, \partial_{\t} K(\t,\vp) \big\rangle =
\partial_{\t} \big \langle \texttt{v}, K(\t,\vp) \big\rangle
\end{equation}
and similarly for $\partial_{\vp}$.
If these weak derivatives $\partial_{\t} K(\t,\vp)$ and $\partial_{\vp} K(\t,\vp)$ exist 
as elements of $\mathbb{B}$ and their norms satisfy \eqref{grad}, the scalar-valued case applies 
and \eqref{sm1} and \eqref{sm2} follow.

The result below extends to all $\ab > -1$ the estimates obtained in
\cite[Section 4]{NoSj} for the restricted range $\ab \ge -1\slash 2$. Moreover, here we also consider 
multipliers of Laplace and Laplace-Stieltjes transform type, which were merely mentioned in \cite{NoSj} and 
which cover as a special case the imaginary powers of $\mathcal{J}^{\ab}$ 
(or $\J^{\ab}\Pi_0$ when $\a+\b+1=0$) investigated there.
\begin{thm} \label{thm:std}
Let $\ab > -1$. Then the kernels \emph{(I)-(III)}, \emph{(IVa)} and \emph{(IVb)} 
satisfy the standard estimates \eqref{gr},
\eqref{sm1} and \eqref{sm2} with $\mathbb{B}$ as indicated above.
\end{thm}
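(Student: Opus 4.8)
The plan is to verify the standard estimates \eqref{gr}, \eqref{sm1}, \eqref{sm2} for each of the five families of kernels (I)--(III), (IVa), (IVb) by reducing everything to the pointwise bounds of Corollary \ref{cor:t1} and Lemma \ref{lem:tL} and then integrating/supremizing in $t$ by means of Lemma \ref{lem:finbridge} and Corollary \ref{cor:tL}. In all scalar-valued cases (II), (IVa), (IVb) it suffices, by the discussion preceding the theorem, to prove the growth bound \eqref{gr} together with the gradient bound \eqref{grad}; in the vector-valued cases (I) and (III) one checks that the weak derivatives in the sense of \eqref{wder} exist as $\mathbb B$-valued functions and satisfy \eqref{grad}, which again yields \eqref{sm1}--\eqref{sm2}. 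The differentiations in $\t$ and $\vp$ under the $t$-integral (or inside the $\mathbb B$-norm) are justified by the dominated convergence theorem using the bounds of Corollary \ref{cor:t1} and Lemma \ref{lem:tL}, exactly as in Corollary \ref{cor:t1}.

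First I would split each kernel's defining $t$-integral into the near-diagonal part $t\in(0,1)$ and the far part $t\in(1,\infty)$. On $(0,1)$, I apply Corollary \ref{cor:t1} (with the appropriate values of $M,N,L$: for \eqref{gr} take $M=N=L=0$, for \eqref{grad} take $N=1,L=0$ or $N=0,L=1$) to dominate $|\partial_\vp^L\partial_\t^N\partial_t^M H_t^{\ab}|$ by a finite sum of the functions $\Upsilon_s^{\ab}(t,\t,\vp)$ of Lemma \ref{lem:finbridge}, with $s$ chosen to record the extra factor of $|\t-\vp|^{-1}$ needed in \eqref{grad}, and with $W$ chosen to match the weight $t^{W-1}dt$ appearing in each case: $W=1$ with $p=\infty$ for the maximal operator (I), $W=N$ with $p=1$ for the Riesz transforms (II), $W=2M+2N$ with $p=2$ for the square functions (III), $W=1$ with $p=1$ for Laplace type (IVa), and $W=1$ with $p=1$ for Laplace--Stieltjes type (IVb) after pulling the measure $\nu$ out. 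Lemma \ref{lem:finbridge} then delivers precisely the right-hand sides of \eqref{gr} and \eqref{grad}. On $(1,\infty)$, Lemma \ref{lem:tL} shows the relevant derivative of $H_t^{\ab}$ decays like $e^{-t(|\frac{\a+\b+1}{2}|+\epsilon)}$ whenever at least one derivative is taken, or whenever $\a+\b+1\ne0$; Corollary \ref{cor:tL} then gives a finite $L^p((1,\infty),t^{W-1}dt)$ norm (for Laplace--Stieltjes one uses Corollary \ref{cor:tLStieltjes} together with the integrability assumption \eqref{assum}), and since the right-hand sides of \eqref{gr}, \eqref{sm1}, \eqref{sm2} are bounded below by a positive constant (the remark after Lemma \ref{lem:bridge}), the far-$t$ contribution is harmlessly absorbed.

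A few case-specific points require care. For (I) and (III) I must confirm that the object $\{\partial_\t^N\partial_t^M H_t^{\ab}(\t,\vp)\}_{t>0}$ genuinely lies in $\mathbb B$ (in $\mathbb X$ for (I), which needs the limits as $t\to0^+$ and $t\to\infty$ to exist — this follows from Proposition \ref{prop:JP} together with $\q\gtrsim(\t-\vp)^2$ and from the series representation), and that the weak derivative identity \eqref{wder} holds, which is a routine application of the dominated convergence theorem via the majorants just described. For the Riesz kernels (II) one should note $N\ge1$, so on $(1,\infty)$ the exponential gain from Lemma \ref{lem:tL} is automatic; on $(0,1)$ one needs Lemma \ref{lem:integral} with $\gamma$ possibly zero or negative when $\a+\b$ is close to $-1$, which is exactly what Lemma \ref{lem:finbridge} already handles. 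For (IVa) the bound is uniform in $\|\phi\|_{L^\infty}$, and the only delicate point is the endpoint $\a+\b+1=0$: there the $M=N=L=0$ term on $(1,\infty)$ does not decay, but it never occurs since in (IVa) one always differentiates once in $t$, and in (IVb) the growth estimate uses $M=N=L=0$ but is saved by \eqref{assum} through Corollary \ref{cor:tLStieltjes}. The main obstacle is bookkeeping rather than a single hard idea: one must route each of the five kernels, in each of the four parameter regimes of Proposition \ref{prop:JP}, through the correct specialization of Lemmas \ref{lem:finbridge} and \ref{lem:tL}, and verify in the two vector-valued cases that all the pointwise majorization and differentiation-under-the-integral steps are legitimate — but once the machinery of Section \ref{sec:prep} is in place, each verification is short.
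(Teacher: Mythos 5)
Your proposal follows essentially the same route as the paper's proof: split the $t$-integration at $t=1$, control the short-time part by Corollary \ref{cor:t1} combined with Lemma \ref{lem:finbridge} (with the same specializations $W=1,p=\infty$ for (I), $W=N,p=1$ for (II), $W=2M+2N,p=2$ for (III), $W=1,p=1$ for (IVa)), control the long-time part by Lemma \ref{lem:tL} and Corollaries \ref{cor:tL} and \ref{cor:tLStieltjes}, reduce the smoothness estimates to the gradient bound \eqref{grad}, and verify the weak-derivative identity \eqref{wder} and membership in $\mathbb{X}$ in the vector-valued cases. The only quibble is your parameter choice ``$W=1$, $p=1$'' for (IVb): since $\nu$ is a general finite (after the weight in \eqref{assum}) measure rather than one dominated by $dt$, the correct specialization is $p=\infty$, i.e.\ one takes the supremum in $t$ of $e^{t\left|\frac{\a+\b+1}{2}\right|}\big|H_t^{\ab}(\t,\vp)\big|$ and then integrates against $d|\nu|$ using \eqref{assum} --- but you describe exactly this mechanism in your last paragraph, so this is a mislabeling rather than a gap.
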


In the proof we tacitly assume that passing
with the differentiation in $\t$ or $\vp$ under integrals against $dt$ or $d\nu(t)$
is legitimate. In fact, such manipulations can easily be verified by means of the dominated convergence 
theorem and the estimates obtained in Corollary \ref{cor:t1} and Lemma \ref{lem:tL}.

\begin{proof}[Proof of Theorem \ref{thm:std}] We treat each of the kernels separately.

\noindent
{\bf The case of $\boldsymbol{\mathfrak{H}^{\ab}(\t,\vp)}$.}
We first deal with the growth condition. Clearly, it suffices to prove 
independently the two bounds emerging from \eqref{gr} by choosing 
$\mathbb{B} = L^{\infty} ( (1,\infty), dt )$ and $\mathbb{B} = L^{\infty} ( (0,1), dt )$.
These, however, are immediate consequences of Corollary \ref{cor:tL} (with $M=N=L=0$, $p=\infty$)
and Corollary \ref{cor:t1} (taken with $M=N=L=0$) combined with Lemma \ref{lem:finbridge} 
(specified to $p=\infty$, $s=0$), respectively.

To obtain the smoothness estimates, we must verify that the weak derivatives 
$\partial_{\t} \mathfrak{H}^{\ab}(\t,\vp)$ and $\partial_{\vp} \mathfrak{H}^{\ab}(\t,\vp)$
exist in the sense of \eqref{wder} and satisfy \eqref{grad}. In this case, $\texttt{v}$ 
is a complex measure in $[0,\infty]$, written $d\texttt{v}$, and
\[
\langle \texttt{v}, \mathfrak{H}^{\ab}(\t,\vp) \rangle
=
\int_{[0,\infty]} H_t^{\ab}(\t,\vp) \, d\texttt{v}(t).
\]
It is enough to consider the derivative with respect to $\t$. We claim that
\[
\partial_{\t} \langle \texttt{v}, \mathfrak{H}^{\ab}(\t,\vp) \rangle
=
\int_{[0,\infty]} \partial_{\t} H_t^{\ab}(\t,\vp) \, d\texttt{v}(t), \quad \t\ne\vp;
\]
observe that $\big\{ \partial_{\t} H_t^{\ab}(\t,\vp) \big\}_{t>0} \in \mathbb{X}$, $\t\ne\vp$, 
as can be seen from Proposition \ref{prop:JP} and the bound $\q \gtrsim (\t-\vp)^2$.
This claim would imply that for $\t\ne\vp$ the weak derivative 
$\partial_{\t} \mathfrak{H}^{\ab}(\t,\vp)$ exists and equals 
$\big\{ \partial_{\t} H_t^{\ab}(\t,\vp) \big\}_{t>0}$. To see that it then also satisfies \eqref{grad}, 
we first consider large $t$ and observe that the estimate
\begin{align*}
\big\| \partial_\t H_t^{\ab}(\t,\vp) \big\|_{L^\infty( (1,\infty), dt)}
	\lesssim 
\frac{1}{|\t-\vp|\mu_{\ab}(B(\t,|\t-\vp|))}, \qquad \t\ne\vp,
\end{align*}
follows from Corollary \ref{cor:tL} 
(specified to $M=L=0$, $N=W=1$, $p = \infty$). For small $t$, we have
\begin{align*}
\big\| \partial_\t H_t^{\ab}(\t,\vp) \big\|_{L^\infty( (0,1), dt)}
	\lesssim 
\frac{1}{|\t-\vp|\mu_{\ab}(B(\t,|\t-\vp|))}, \qquad \t\ne\vp,
\end{align*}
in view of Corollary \ref{cor:t1} (with $M=L=0$, $N=1$) and Lemma \ref{lem:finbridge} 
(taken with $W=1$, $p=\infty$, $s=1$).

To prove the claim, we verify that 
\begin{equation}\label{maxder}
\int_{\t_1}^{\t_2} \int_{[0,\infty]} \partial_\t H_t^{\ab}(\t,\vp) \, d\texttt{v}(t) \, d\t
=
\int_{[0,\infty]} H_t^{\ab}(\t_2,\vp) \, d\texttt{v}(t) - 
\int_{[0,\infty]} H_t^{\ab}(\t_1,\vp) \, d\texttt{v}(t),
\end{equation}
where $\t_1$ and $\t_2$ are chosen in $[0,\pi]$ so that $\vp\notin [\t_1,\t_2]$. Then the claim follows by 
differentiation with respect to $\t_2$. Finally, \eqref{maxder} follows from Fubini's theorem, since 
$\partial_\t H_t^{\ab}(\t,\vp)$ is continuous and bounded in $[0,\infty] \times [\t_1,\t_2]$.

\noindent
{\bf The case of $\boldsymbol{R_N^{\ab}(\t,\vp)}$.}
To prove the growth condition, it is enough to verify that 
\[
\big\| \partial_\t^N H_t^{\ab}(\t,\vp) \big\|_{L^1( t^{ N - 1} dt)}
\lesssim
\frac{1}{\mu_{\ab}(B(\t,|\t-\vp|))}, \qquad \t \ne \vp.
\]
This, however, is a consequence of Corollary \ref{cor:tL} (taken with $M=L=0$, $W=N$, $p=1$) and Corollary 
\ref{cor:t1} (with $M=L=0$) combined with Lemma \ref{lem:finbridge} (specified to $W=N$, $p=1$, $s=0$).

In order to show the gradient bound \eqref{grad}, it suffices to check that
\[
\Big\| \big| \nabla_{\! \t,\vp} \partial_\t^N H_t^{\ab}(\t,\vp) \big|
\Big\|_{L^1( t^{ N - 1} dt)}
\lesssim
\frac{1}{|\t-\vp|\mu_{\ab}(B(\t,|\t-\vp|))}, \qquad \t \ne \vp.
\]
This estimate follows by means of Corollary \ref{cor:tL} (applied with $M=0$, $p=1$) and Corollary 
\ref{cor:t1} (with $M=0$) together with Lemma \ref{lem:finbridge} (specified to $W=N$, $p=1$, $s=1$).

\noindent
{\bf{{The case of $\boldsymbol{\mathfrak{G}^{\ab}_{M,N}(\t,\vp)}$.}}}
The growth condition is a straightforward consequence of Corollary \ref{cor:tL} (with $L=0$, $W=2M + 2N$, 
$p=2$), Corollary \ref{cor:t1} (with $L=0$) and Lemma \ref{lem:finbridge} (taken with $W=2M + 2N$, $p=2$, 
$s=0$).

Next, we show the gradient estimate \eqref{grad}, which amounts to
\[
\Big\| \big|\nabla_{\! \t,\vp} \partial_\t^{N}\partial_t^M H_t^{\ab}(\t,\vp) \big|
\Big\|_{L^2(t^{2M + 2N - 1} dt)}
\lesssim 
\frac{1}{|\t-\vp|\mu_{\ab}(B(\t,|\t-\vp|))}, \qquad \t\ne\vp,
\]
where $\nabla_{\! \t,\vp}$ is taken in the weak sense.
This follows with the aid of  
Corollary \ref{cor:tL} (with $W=2M+2N$, $p=2$),
Corollary \ref{cor:t1} and Lemma \ref{lem:finbridge} (applied with $W=2M + 2N$, $p=2$, $s=1$); cf. the 
arguments given for the case $\mathfrak{H}^{\ab}(\t,\vp)$ above.

\noindent
{\bf{{The case of $\boldsymbol{K^{\ab}_{\phi}(\t,\vp)}$.}}}
The growth bound is a direct consequence of the assumption $\phi \in L^\infty(dt)$, Corollary \ref{cor:tL} 
(specified to $M=1$, $N=L=0$, $W=1$, $p=1$), Corollary \ref{cor:t1} (with $M=1$, $N=L=0$) and Lemma 
\ref{lem:finbridge} (taken with $W=1$, $p=1$, $s=0$).

Since $\phi$ is bounded, to prove the gradient estimate it is enough to verify that 
\[
\Big\| \big| \nabla_{\! \t,\vp} \partial_t H_t^{\ab}(\t,\vp) \big| \Big\|_{L^1(dt)}
\lesssim
\frac{1}{|\t-\vp|\mu_{\ab}(B(\t,|\t-\vp|))}, \qquad \t \ne \vp.
\] 
Now applying Corollary \ref{cor:tL} (with $M=1$, $W=1$, $p=1$ and either $N=1$, $L=0$ or $N=0$, $L=1$), 
Corollary \ref{cor:t1} (specified to $M=1$ and either $N=1$, $L=0$ or $N=0$, $L=1$) and Lemma 
\ref{lem:finbridge} (taken with $W=1$, $p=1$, $s=1$) we arrive at the desired bound.

\noindent
{\bf{{The case of $\boldsymbol{K^{\ab}_{\nu}(\t,\vp)}$.}}}
To show the growth condition it is enough, by the assumption \eqref{assum} concerning the measure $\nu$, 
to check that 
\begin{align*} 
\Big\| e^{t\left| \frac{\a+\b+1}{2} \right|} H_t^{\ab}(\t,\vp)  \Big\|_{L^\infty((1,\infty), dt)}
&\lesssim
\frac{1}{\mu_{\ab}(B(\t,|\t-\vp|))}, \qquad \t \ne \vp,\\
\|  H_t^{\ab}(\t,\vp) \|_{L^\infty((0,1), dt)}
&\lesssim
\frac{1}{\mu_{\ab}(B(\t,|\t-\vp|))}, \qquad \t \ne \vp.
\end{align*}
The first estimate above is an immediate consequence of Corollary \ref{cor:tLStieltjes} 
(applied with $N=L=0$). 
On the other hand, the remaining bound is just part of the growth condition for
$\mathfrak{H}^{\ab}(\t,\vp)$, which is already justified.

Taking \eqref{assum} into account, to verify the gradient estimate \eqref{grad}, it suffices to show that
\begin{align*} 
\left\| e^{t\left| \frac{\a+\b+1}{2} \right|} 
\big| \nabla_{\! \t,\vp} H_t^{\ab}(\t,\vp) \big| \right\|_{L^\infty((1,\infty), dt)}
&\lesssim
\frac{1}{|\t-\vp|\mu_{\ab}(B(\t,|\t-\vp|))}, \qquad \t \ne \vp, \\
\left\| \big| \nabla_{\! \t,\vp} H_t^{\ab}(\t,\vp) \big| 
\right\|_{L^\infty((0,1), dt)}
&\lesssim
\frac{1}{|\t-\vp|\mu_{\ab}(B(\t,|\t-\vp|))}, \qquad \t \ne \vp.
\end{align*}
Again, an application of Corollary \ref{cor:tLStieltjes} (with either $N=1$, $L=0$ or $N=0$, $L=1$) produces 
the first bound. The second one is contained in the proof of the gradient estimate for 
$\mathfrak{H}^{\ab}(\t,\vp)$.

The proof of Theorem \ref{thm:std} is complete.
\end{proof}

\section{Calder\'on-Zygmund operators} \label{sec:CZ}

Let $\mathbb{B}$ be a Banach space and
suppose that $T$ is a linear operator assigning to each $f\in L^2(d\m_{\ab})$
a strongly measurable $\mathbb{B}$-valued function $Tf$ on $[0,\pi]$. 
Then $T$ is said to be a (vector-valued)
Calder\'on-Zygmund operator in the sense of the space $([0,\pi],d\m_{\ab},|\cdot|)$ associated with
$\mathbb{B}$ if
\begin{itemize}
\item[(A)] $T$ is bounded from $L^2(d\m_{\ab})$ to $L^2_{\mathbb{B}}(d\m_{\ab})$,
\item[(B)] there exists a standard $\mathbb{B}$-valued kernel $K(\t,\vp)$ such that
$$
Tf(\t) = \int_0^{\pi} K(\t,\vp) f(\vp)\, d\m_{\ab}(\vp), \qquad \textrm{a.a.}\; \t \notin \support f,
$$
for $f \in L^{\infty}([0,\pi])$.
\end{itemize}
Here integration of $\mathbb{B}$-valued functions is understood in Bochner's sense, and
$L^2_{\mathbb{B}}(d\m_{\ab})$ is the Bochner-Lebesgue space of all $\mathbb{B}$-valued 
$d\m_{\ab}$-square integrable functions on $[0,\pi]$.

It is well known that a large part of the classical theory of Calder\'on-Zygmund operators remains valid,
with appropriate adjustments, when the underlying space is of homogeneous type and the associated kernels
are vector-valued, see for instance \cite{RRT,RuTo}. In particular, if $T$ is a Calder\'on-Zygmund
operator in the sense of $([0,\pi],d\m_{\ab},|\cdot|)$ associated with a Banach space $\mathbb{B}$,
then its mapping properties in weighted $L^p$ spaces follow from the general theory. 

Let
$$
\mathcal{H}_t^{\ab}f(\t) = \int_0^{\pi} H_t^{\ab}(\t,\vp) f(\vp)\, d\m_{\ab}(\vp), \qquad t>0, \quad
	\t \in [0,\pi],
$$
be the Jacobi-Poisson semigroup. For $\ab > -1$ consider the following operators defined initially
in $L^2(d\m_{\ab})$.
\begin{itemize}
\item[(I)] 
The Jacobi-Poisson semigroup maximal operator
$$
\mathcal{H}_*^{\ab}f = \big\| \mathcal{H}_t^{\ab}f \big\|_{L^{\infty}(dt)}.
$$
\item[(II)] 
Riesz-Jacobi transforms of orders $N=1,2,\ldots$
$$
R_N^{\ab}f = \sum_{n=1}^{\infty} \Big| n + \frac{\a+\b+1}2\Big|^{-N} 
	\big\langle f,\P_n^{\ab}\big\rangle_{d\m_{\ab}} \, \partial_{\theta}^N\P_n^{\ab},
$$
where $\big\langle f,\P_n^{\ab}\big\rangle_{d\m_{\ab}}$ are the Fourier-Jacobi coefficients of $f$.
\item[(III)] 
Littlewood-Paley-Stein type mixed square functions
$$
g_{M,N}^{\ab}(f) = 
	\big\|\partial_{\theta}^{N}\partial_t^M\mathcal{H}_t^{\ab}f\big\|_{L^2(t^{2M+2N-1}dt)},
$$
where $M,N = 0,1,2,\ldots$ and $M+N>0$.
\item[(IV)] 
Multipliers of Laplace and Laplace-Stieltjes transform type
$$
M^{\ab}_{\mathfrak{m}}f = \sum_{n=0}^{\infty} \mathfrak{m}\bigg(\Big|n+\frac{\a+\b+1}2\Big|\bigg)
	\big\langle f,\P_n^{\ab}\big\rangle_{d\m_{\ab}} \P_n^{\ab},
$$
where either $\mathfrak{m}(z) = \int_0^{\infty} z e^{-tz} \phi(t)\, dt$ with $\phi \in L^{\infty}(dt)$
or $\mathfrak{m}(z) = \int_{(0,\infty)} e^{-tz} \, d\nu(t)$ with $\nu$ being a signed or complex Borel
measure on $(0,\infty)$ whose total variation satisfies \eqref{assum}.
\end{itemize}

The formulas defining $\mathcal{H}_*^{\ab}$ and $g_{M,N}^{\ab}$ are understood pointwise and are
actually valid for general functions $f$ from weighted $L^p$ spaces with Muckenhoupt weights.
This is because for such $f$ the integral defining $\mathcal{H}_t^{\ab}f(\t)$ is well defined and produces
a smooth function of $(t,\t)\in (0,\infty)\times [0,\pi]$, see \cite[Section 2]{NoSj}. The series
defining $R_N^{\ab}$ and $M_{\mathfrak{m}}^{\ab}$ indeed converge in $L^2(d\m_{\ab})$, 
which is clear in the case of $M_{\mathfrak{m}}^{\ab}$,
since the values of $\mathfrak{m}$ that occur here stay bounded.
For $R_N^{\ab}$ the convergence
follows by \cite[Lemma 3.1]{NoSj}, see the proof of \cite[Proposition 2.2]{NoSj} in the case of $R_N^{\ab}$.

As a consequence of Theorem \ref{thm:std} we get the following result.
\begin{thm} \label{thm:main}
Assume that $\ab > -1$. The Riesz-Jacobi transforms and the multipliers of Laplace and Laplace-Stieltjes
transform type are scalar-valued Calder\'on-Zygmund operators in the sense of the space
$([0,\pi],d\m_{\ab},|\cdot|)$. Furthermore, the Jacobi-Poisson semigroup maximal operator and the mixed
square functions can be viewed as vector-valued Calder\'on-Zygmund operators in the sense of
$([0,\pi],d\m_{\ab},|\cdot|)$, associated with the Banach spaces $\mathbb{B}=\mathbb{X}$ and
$\mathbb{B} = L^2(t^{2M+2N-1}dt)$, respectively.
\end{thm}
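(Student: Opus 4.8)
The plan is to deduce Theorem \ref{thm:main} from Theorem \ref{thm:std} by checking, for each operator, the two defining properties (A) and (B) of a Calder\'on-Zygmund operator in the sense of $([0,\pi],d\m_{\ab},|\cdot|)$. Property (B) --- the kernel representation off the diagonal --- is essentially already in hand: the kernels listed in (I)--(IVb) at the start of Section \ref{sec:kerest} are precisely the kernels of the operators (I)--(IV), and Theorem \ref{thm:std} asserts that each satisfies the standard estimates \eqref{gr}, \eqref{sm1}, \eqref{sm2}. What remains for (B) is to verify the integral identity $Tf(\t) = \int_0^\pi K(\t,\vp)f(\vp)\,d\m_{\ab}(\vp)$ for a.a.\ $\t\notin\support f$ and $f\in L^\infty$; this follows by writing $\mathcal H_t^{\ab}f(\t) = \int_0^\pi H_t^{\ab}(\t,\vp)f(\vp)\,d\m_{\ab}(\vp)$, applying the operation defining each $T$ (supremum over $t$ for $\mathcal H_*^{\ab}$; the integral $\frac1{\Gamma(N)}\int_0^\infty(\cdot)\,t^{N-1}dt$ for $R_N^{\ab}$; the $L^2(t^{2M+2N-1}dt)$-norm for $g_{M,N}^{\ab}$; and $-\int_0^\infty\phi(t)\,\partial_t(\cdot)\,dt$ or $\int_{(0,\infty)}(\cdot)\,d\nu(t)$ for the multipliers), and then interchanging this operation with the $\vp$-integral. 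The interchange is justified by Fubini's theorem and Minkowski's integral inequality, using the fact that $\support f$ is separated from $\t$, so that $\q\gtrsim(\t-\vp)^2$ on the relevant region, together with the pointwise bounds of Corollary \ref{cor:t1}, Lemma \ref{lem:tL} and the integrability furnished by Lemma \ref{lem:finbridge} and Corollary \ref{cor:tL}; the assumption \eqref{assum} takes care of the Laplace-Stieltjes case.

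For property (A), the $L^2(d\m_{\ab})$-boundedness, I would invoke the spectral theorem rather than the kernel. Since $\{\P_n^{\ab}\}$ is an orthonormal basis of $L^2(d\m_{\ab})$ consisting of eigenfunctions of the self-adjoint operator $\J^{\ab}$ with eigenvalues $\big(n+\tfrac{\a+\b+1}{2}\big)^2$, equivalently with $\sqrt{\J^{\ab}}$ having eigenvalues $|n+\tfrac{\a+\b+1}{2}|$, each of the operators in question has an explicit spectral multiplier description. For $\mathcal H_*^{\ab}$, $L^2$-boundedness is the maximal theorem for the symmetric diffusion semigroup $\mathcal H_t^{\ab}=e^{-t\sqrt{\J^{\ab}}}$, which follows from the general Stein theory (or already from the contractivity of $\mathcal H_t^{\ab}$ on $L^2$ and Hopf--Dunford--Schwartz). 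For the multiplier operators $M^{\ab}_{\mathfrak m}$, the symbol $\mathfrak m(|n+\tfrac{\a+\b+1}{2}|)$ is a bounded sequence --- since $|\mathfrak m(z)|\le\|\phi\|_\infty$ in the Laplace case and $|\mathfrak m(z)|\le\int e^{-tz}\,d|\nu|(t)$, bounded for $z$ in the relevant discrete set by \eqref{assum} in the Laplace--Stieltjes case --- so $M^{\ab}_{\mathfrak m}$ is bounded on $L^2$ by Parseval. For the square functions $g_{M,N}^{\ab}$, $L^2$-boundedness is the classical Littlewood--Paley--Stein $g$-function estimate for the semigroup, which reduces via Parseval to the elementary inequality $\int_0^\infty|\partial_t^M(\lambda^N e^{-t\lambda})|^2\,t^{2M+2N-1}\,dt\lesssim 1$ uniformly in $\lambda\ge 0$ (this is where one uses $M+N>0$); here the relevant derivative in $\theta$ is handled by the identity $\partial_\theta\P_n^{\ab}=-\tfrac12\sqrt{n(n+\a+\b+1)}\sin\theta\,\P_{n-1}^{\a+1,\b+1}$ already recalled in the proof of Lemma \ref{lem:tL}, so that $\partial_\theta^N\partial_t^M\mathcal H_t^{\ab}$ again has a manageable spectral form. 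For $R_N^{\ab}$, $L^2$-boundedness is immediate from its series definition: the multiplier $|n+\tfrac{\a+\b+1}{2}|^{-N}$ is bounded for $n\ge 1$, and $\|\partial_\theta^N\P_n^{\ab}\|_{L^2(d\m_{\ab})}$ grows polynomially while the multiplier decays like $n^{-N}$ --- more precisely one uses the normalization of $\P_{n-1}^{\a+1,\b+1}$ and the factor $|n+\tfrac{\a+\b+1}{2}|^{-N}$ to see the series converges in $L^2$, as noted in the excerpt via \cite[Lemma 3.1]{NoSj}.

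The last point to address is consistency: the general Calder\'on--Zygmund machinery is applied to an operator $T$ that is a priori defined only on $L^2(d\m_{\ab})$, and one must know that on $L^\infty\cap L^2$ it agrees with the integral operator built from the kernel $K(\t,\vp)$ for $\t$ outside the support of $f$. This is exactly the identity established in the verification of (B), so (A) and (B) together are all that is needed; once both hold, the cited theory (\cite{RRT,RuTo} and the references in the excerpt) yields that each of these operators extends to a bounded operator on $L^p(w\,d\m_{\ab})$ for $1<p<\infty$ and $w$ in the appropriate Muckenhoupt class, and is of weak type $(1,1)$. I expect the main obstacle to be purely bookkeeping rather than conceptual: carefully justifying the Fubini/Minkowski interchanges that produce the kernel representation (B), uniformly across the four parameter regimes of Proposition \ref{prop:JP} and for each of the five operator types, while keeping track of which of Corollary \ref{cor:t1}, Lemma \ref{lem:finbridge}, Corollary \ref{cor:tL} and Corollary \ref{cor:tLStieltjes} supplies the needed domination --- but since Theorem \ref{thm:std} has already done the hard analytic work of verifying the standard estimates, this step is routine.
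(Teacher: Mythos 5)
Your proposal follows the same route as the paper: the standard estimates come from Theorem \ref{thm:std}, and what remains is to check the $L^2$-boundedness (A) and the kernel association (B), which the paper handles by observing that the arguments of \cite[Section 3]{NoSj} (Parseval/spectral theory for the multipliers and Riesz transforms, Stein's maximal theorem for the semigroup, the Littlewood--Paley identity for the square functions, and Fubini-type interchanges for the off-diagonal kernel representation) carry over verbatim to all $\ab>-1$ once the estimates of Section \ref{sec:kerest} are available, with the Laplace and Laplace--Stieltjes multipliers treated as you do via boundedness of the symbol. Your sketch supplies exactly the details the paper delegates to \cite{NoSj}, so it is correct and essentially identical in structure.
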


\begin{proof}
The standard estimates are provided in all the cases by Theorem \ref{thm:std}.
Thus it suffices to verify $L^2$ boundedness and kernel associations (conditions (A) and (B) above).
This, however, was essentially done in \cite[Section 3]{NoSj}, since the arguments given there are
actually valid for all $\ab > -1$ if combined with the estimates proved (in some cases implicitly)
in Section \ref{sec:kerest}.
An exception here are the Laplace and Laplace-Stieltjes type multipliers. But in these cases the
boundedness in $L^2$ is straightforward, and kernel associations are justified according to
the outline opening the proof of \cite[Proposition 2.3]{NoSj}, see \cite[Section 3, pp.\,732--733]{NoSj}.
Since all the necessary ingredients are
contained in \cite{NoSj} and in the present paper, we leave further details to interested readers.
\end{proof}

Denote by $A_p^{\ab}$, $1 \le p < \infty$, the Muckenhoupt classes of weights related to the space
$([0,\pi],d\m_{\ab},|\cdot|)$ (see \cite[Section 1]{NoSj} for the definition).
\begin{cor}
Let $\ab > -1$. The Riesz-Jacobi transforms and the multipliers of Laplace and Laplace-Stieltjes
type extend to bounded linear operators on $L^p(wd\m_{\ab})$, $w \in A_p^{\ab}$, $1<p<\infty$,
and from $L^1(wd\m_{\ab})$ to weak $L^1(wd\m_{\ab})$, $w \in A_1^{\ab}$. 
The same boundedness properties hold for the Jacobi-Poisson
semigroup maximal operator and the mixed square functions, viewed as scalar-valued sublinear operators.
\end{cor}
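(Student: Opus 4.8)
The plan is to deduce the corollary directly from Theorem~\ref{thm:main} together with the weighted Calder\'on--Zygmund theory in spaces of homogeneous type. Since $d\m_{\ab}$ is doubling, the triple $([0,\pi],d\m_{\ab},|\cdot|)$ is a space of homogeneous type, and the classes $A_p^{\ab}$ are precisely the Muckenhoupt classes attached to it. The general theory (see \cite{RRT,RuTo}) asserts that any Calder\'on--Zygmund operator in the sense of such a space, whether scalar- or $\mathbb{B}$-valued, which is bounded on the unweighted $L^2$, extends to a bounded operator from $L^p(w\,d\m_{\ab})$ to $L^p_{\mathbb{B}}(w\,d\m_{\ab})$ for every $1<p<\infty$ and $w\in A_p^{\ab}$, and from $L^1(w\,d\m_{\ab})$ to weak $L^1_{\mathbb{B}}(w\,d\m_{\ab})$ for $w\in A_1^{\ab}$.

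First I would invoke Theorem~\ref{thm:main} to see that $R_N^{\ab}$ and $M^{\ab}_{\mathfrak m}$ are scalar-valued Calder\'on--Zygmund operators in the relevant sense; the weighted theory cited above then immediately yields the stated $L^p(w)$ and weak-type $(1,1)$ bounds for these two families. Here one should note that the a priori $L^2$-defined operators genuinely extend to weighted $L^p$: the space $L^2(d\m_{\ab})\cap L^p(w\,d\m_{\ab})$ is dense in $L^p(w\,d\m_{\ab})$, and the kernel association of condition~(B) guarantees that the extension is consistent and independent of the approximating sequence.

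Next I would treat $\mathcal{H}_*^{\ab}$ and $g_{M,N}^{\ab}$. By Theorem~\ref{thm:main} the associated vector-valued operators $f\mapsto \{\mathcal H_t^{\ab}f\}_{t>0}$ and $f\mapsto\{\partial_{\t}^N\partial_t^M\mathcal H_t^{\ab}f\}_{t>0}$ are Calder\'on--Zygmund operators valued in $\mathbb{X}$ and in $L^2(t^{2M+2N-1}dt)$ respectively, hence bounded from $L^p(w\,d\m_{\ab})$ to the corresponding Bochner--Lebesgue space and of weak type $(1,1)$ with $A_1^{\ab}$ weights. Taking the $\mathbb{B}$-norm inside and using that $\mathcal H_*^{\ab}f=\|\{\mathcal H_t^{\ab}f\}_{t>0}\|_{\mathbb{X}}$ and $g_{M,N}^{\ab}(f)=\|\{\partial_{\t}^N\partial_t^M\mathcal H_t^{\ab}f\}_{t>0}\|_{L^2(t^{2M+2N-1}dt)}$ pointwise, one converts these vector-valued estimates into the asserted scalar (sub)linear bounds, the weak-type $(1,1)$ estimate included. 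A subtlety to address is that $\mathcal H_*^{\ab}$ and $g_{M,N}^{\ab}$ were already defined pointwise on weighted $L^p$; but since for such $f$ the integral defining $\mathcal H_t^{\ab}f(\t)$ produces a smooth function of $(t,\t)\in(0,\infty)\times[0,\pi]$ (as recalled after the statement of Theorem~\ref{thm:main}), this pointwise definition agrees on the dense subclass with the Calder\'on--Zygmund extension, so no ambiguity arises.

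The argument is essentially bookkeeping once Theorem~\ref{thm:main} is available; the only point requiring genuine care --- and the closest thing to an obstacle --- is reconciling the several definitions of each operator (the $L^2$ spectral definition, the pointwise integral definition where applicable, and the abstract Calder\'on--Zygmund extension to weighted $L^p$), so that the general weighted bounds really apply to the operators as originally introduced. This is routine given the kernel associations established in the proof of Theorem~\ref{thm:main} and the smoothness of $(t,\t)\mapsto\mathcal H_t^{\ab}f(\t)$.
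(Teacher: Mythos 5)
Your proposal is correct and follows essentially the same route as the paper: invoke Theorem~\ref{thm:main}, apply the weighted Calder\'on--Zygmund theory for spaces of homogeneous type to get the scalar cases directly, and for the maximal operator and square functions pass from the vector-valued bounds to the scalar sublinear ones by taking the $\mathbb{B}$-norm pointwise (the paper delegates this last step to the argument of \cite[Corollary 2.5]{NoSj}). Your added remarks on density and consistency of the various definitions are sound and only make explicit what the paper leaves implicit.
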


\begin{proof}
The part concerning $R_N^{\ab}$ and $M_{\mathfrak{m}}^{\ab}$ is a direct consequence of 
Theorem \ref{thm:main} and the general theory. The remaining part follows by Theorem \ref{thm:main}
and the arguments given in the proof of \cite[Corollary 2.5]{NoSj}.
\end{proof}

\section{Exact behavior of the Jacobi-Poisson kernel} \label{sec:JPsharp}

We give another application of the representations in Proposition \ref{prop:JP},
which is interesting and important in its own right.
We will describe in a sharp way 
the behavior of the kernels $\H_t^{\ab}(\t,\vp)$ and $H_t^{\ab}(\t,\vp)$.
The result below extends sharp estimates for the Jacobi-Poisson kernel obtained in 
\cite[Theorem 5.2]{NoSj2} under the restriction $\ab \ge -1/2$.
\begin{thm} \label{thm:JPsharp}
Let $\ab > -1$. Then
\begin{align*}
H^{\ab}_t(\t,\vp) & \simeq \H^{\ab}_t(\t,\vp) \\ & \simeq \Big( t^2+ \t^2+\vp^2 \Big)^{-\a-1/2}
	\Big( t^2 + (\pi-\t)^2 + (\pi-\vp)^2 \Big)^{-\b-1/2} \frac{t}{t^2+(\t-\vp)^2},
\end{align*}
uniformly in $0 < t \le 1$ and $\t,\vp \in [0,\pi]$, and
$$
H_t^{\ab}(\t,\vp)  \simeq \exp\bigg( -t \frac{|\alpha+\beta+1|}{2} \bigg), \qquad
\H_t^{\ab}(\t,\vp)  \simeq \exp\bigg( -t \frac{\alpha+\beta+1}{2} \bigg),
$$
uniformly in $t \ge 1$ and $\t,\vp \in [0,\pi]$.
\end{thm}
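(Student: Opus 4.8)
The plan is to prove the sharp estimate for $\H_t^{\ab}$ first, treating the short-time regime $0<t\le 1$ and the long-time regime $t\ge 1$ separately, and then to transfer everything to $H_t^{\ab}$ via the decomposition \eqref{JPH}. For the long-time range, the estimate $\H_t^{\ab}(\t,\vp)\simeq \exp(-t(\a+\b+1)/2)$ is essentially immediate: the lower bound comes from the leading term in the series decomposition \eqref{LTdec}, namely $\frac1{\mu_{\ab}([0,\pi])}e^{-t|\a+\b+1|/2}$, after checking that this term has the right sign relative to $\exp(-t(\a+\b+1)/2)$ (when $\a+\b+1\ge 0$ they agree up to a constant; when $\a+\b+1<0$ one uses $\H_t^{\ab}\ge 0$ together with the Bailey/$F_4$ formula \eqref{HF4}, which for large $t$ behaves like $c_{\ab}\,2^{-(\a+\b+1)}\,e^{-t(\a+\b+1)/2}$ since $\sinh\frac t2\big/(\cosh\frac t2)^{\a+\b+2}\simeq e^{-t(\a+\b+1)/2}$ and the $F_4$ argument tends to a finite constant bounded away from the boundary of convergence). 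The upper bound is exactly Lemma \ref{lem:tL} with $M=N=L=0$, which gives $\H_t^{\ab}(\t,\vp)\lesssim e^{-t|\a+\b+1|/2}$; combined with $|\a+\b+1|/2\le (\a+\b+1)/2$ being false in general, one instead notes $e^{-t|\a+\b+1|/2}\simeq e^{-t(\a+\b+1)/2}$ fails only when $\a+\b+1<0$, in which case one must use the cancellation in \eqref{JPH} — so it is cleaner to run the $F_4$ asymptotics directly for the upper bound too. The statement for $H_t^{\ab}$ in the long-time range then follows from \eqref{JPH}: the added term is $\simeq \sinh(\frac{\a+\b+1}2 t)\simeq e^{-t(\a+\b+1)/2}$ when $\a+\b<-1$, and a short computation shows $\H_t^{\ab}+(\text{this term})\simeq e^{-t|\a+\b+1|/2}$.

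For the short-time range, since $0<t\le 1$ forces $H_t^{\ab}=\H_t^{\ab}$ when $\a+\b\ge -1$ — wait, no: the correction term in \eqref{JPH} is present whenever $\a+\b<-1$ regardless of $t$, but $\sinh(\frac{\a+\b+1}{2}t)\simeq t$ for $0<t\le 1$, which is dominated by the main term (to be shown), so $H_t^{\ab}\simeq\H_t^{\ab}$ here as well. Thus the core is the two-sided bound on $\H_t^{\ab}(\t,\vp)$. The \textbf{upper bound} is obtained by feeding Proposition \ref{prop:JP} into Lemma \ref{lem:intest}(a), exactly along the lines of \cite[Lemma 5.3, Theorem 5.2]{NoSj2} but now in all four parameter regimes. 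Concretely, writing $a=\st\svp$, $b=\ct\cvp$, one needs to estimate integrals of the form $\iint (\cosh\frac t2-1+q)^{-\a-\b-2}\,d\Pi(u)\,d\Pi(v)$ and their regularized variants with $\partial_u,\partial_v$ and the densities $\Pi_{\a}(u)\,du$; applying part (a) of Lemma \ref{lem:intest} successively in $v$ and then in $u$ (with $D-B\simeq t^2+\t^2+\vp^2$ on one side and $t^2+(\pi-\t)^2+(\pi-\vp)^2$ on the other, and $A-B\simeq t^2+(\t-\vp)^2$ controlling the singular factor), and using $\cosh\frac t2-1\simeq t^2$ for $0<t\le 1$, collapses each double integral to the claimed product. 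The regularized integrands in cases (ii)--(iv) of Proposition \ref{prop:JP} require using Lemma \ref{lem:ximes} to replace $|\Pi_{\a}(u)|\,du$ by $|u|\,d\Pi_{\a+1}(u)$ and absorbing the extra $|u|\le 1$, together with the derivative bounds from Lemma \ref{lem:EST} (with $\lam=\a+\b+2$, $M=N=L=0$, $K,R\in\{0,1\}$) to see that differentiating $\Psi$ in $u$ or $v$ produces exactly the factors $(\st+\svp)^{2}$ resp. $(\ct+\cvp)^2$ that are then exactly what is needed to shift $\a\mapsto\a+1$ or $\b\mapsto\b+1$ in Lemma \ref{lem:intest}(a) and land on the same final expression.

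The \textbf{lower bound} for $0<t\le1$ is where the main work lies, and it is the step I expect to be the principal obstacle. By the Remark after Proposition \ref{prop:JP}, every double integral appearing there is nonnegative, so it suffices to bound \emph{one} well-chosen piece from below. The natural choice is the ``main'' term $\iint\Puv\,\piK\,\piR$ (the last summand in each of (i)--(iv), with $K=R=1$), but one must then also show that the other, a priori signed-looking, terms do not cancel it — which is automatic precisely because of the positivity Remark. So the task reduces to: $\iint (\cosh\frac t2-1+q(\t,\vp,u,v))^{-\a-\b-2}\,d\Pi_{\a+1}(u)\,d\Pi_{\b+1}(v)\gtrsim$ (the claimed expression, without the $(\st+\svp)$, $(\ct+\cvp)$ prefactors in cases (ii)--(iv), which are then supplied by the lower bound on $|\Pi_{\a}(u)|\,du\gtrsim |u|\,d\Pi_{\a+1}(u)$ restricted to $|u|\ge 1/2$, say). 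For this one restricts the integration to a region like $\{1-u\simeq\max(1,(t^2+\t^2+\vp^2)/(\st\svp)),\ 1-v\simeq\dots\}\cap(0,1]^2$ where $q\simeq t^2+(\t-\vp)^2+(\text{the two corner terms})$ is comparable to its minimum, estimate the measure of that region from below using $d\Pi_{\gamma}(w)\simeq(1-|w|)^{\gamma-1/2}\,dw$ near $w=1$ (valid since $\gamma=\a+1>1/2$, $\gamma=\b+1>1/2$), and check the resulting power bookkeeping matches. The delicate point is handling all the different orderings of the quantities $t$, $\t+\vp$, $\pi-\t+\pi-\vp$, $|\t-\vp|$ and the interaction with the two ``corner'' scales simultaneously — this is precisely the kind of case analysis carried out for $\ab\ge-1/2$ in \cite{NoSj2}, and the new feature is only that the measures $d\Pi_{\a+1},d\Pi_{\b+1}$ (rather than $d\Pi_{\a},d\Pi_{\b}$) appear, which changes exponents but not the structure of the argument. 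Finally, combining the short-time and long-time bounds with \eqref{JPH} and noting (as above) that the correction term is negligible for $0<t\le1$ and exactly accounts for the change from $\a+\b+1$ to $|\a+\b+1|$ for $t\ge1$ completes the proof.
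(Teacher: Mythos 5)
Your overall architecture (Proposition \ref{prop:JP} fed into Lemma \ref{lem:intest}(a), positivity of the individual double integrals, transfer to $H_t^{\ab}$ via \eqref{JPH}) matches the paper's, and your upper-bound sketch is essentially correct. The gap is in the short-time \emph{lower} bound for $\H_t^{\ab}$, which you rightly identify as the crux but then propose to handle in a way that fails, for two reasons. First, no single term of the decomposition is bounded below by the claimed expression (call it $Z$) uniformly: in the case $-1<\a,\b<-1/2$, the term $\iint \Puv\,\piu\,\piv$ is $\simeq (1-X)^{-\a-1/2}(1-Y)^{-\b-1/2}Z$ with $X=\st\svp/(\cosh\frac t2-\ct\cvp)$, $Y=\ct\cvp/(\cosh\frac t2-\st\svp)$, and this degenerates relative to $Z$ as $X,Y\to 1$ (e.g.\ $\t=\vp=\pi/2$, $t\to 0$). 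One must bound \emph{all} the terms from below and verify that the sum of the resulting lower bounds, of the form $\big((X\wedge Y)^4+X^2(1-Y)^{-\b-1/2}+(1-X)^{-\a-1/2}Y^2+(1-X)^{-\a-1/2}(1-Y)^{-\b-1/2}\big)Z$, is $\gtrsim Z$; each piece dominates in a different regime of $(X,Y)$. Second, and more seriously, the terms carrying the densities $\Pi_{\a}(u)\,du$, $\Pi_{\b}(v)\,dv$ (odd, negative for positive argument) have integrands that change sign on $[-1,1]^2$, so restricting to $u,v\in(1/2,1]$ produces an \emph{upper} bound for those terms, not a lower one. A genuine lower bound requires quantifying the near-cancellation between the quadrants, which the paper does by rewriting the integrals as first or second differences of $(\cosh\frac t2\mp u\st\svp\mp v\ct\cvp)^{-\tau}$ and applying \eqref{lem62} and Lemma \ref{lem:abcdest}; this is precisely what yields the weakened factors $X^2$ and $(X\wedge Y)^4$ above. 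This cancellation mechanism is the genuinely new ingredient for parameters below $-1/2$; it is not a matter of ``changed exponents but the same structure.''

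A secondary gap is the transfer to $H_t^{\ab}$ when $\l:=\a+\b+1<0$. For $0<t\le 1$ the correction $2^{\l+1}c_{\ab}\sinh(\l t/2)$ is negative and of size $\simeq t$, while $Z$ can also be $\simeq t$ (e.g.\ $\t=0$, $\vp=\pi$), so ``dominated by the main term'' is a statement about constants, not orders of magnitude; the paper resolves it by comparing the explicit leading coefficient of the $F_4$ expansion \eqref{HF4} with the correction for $t\le T_0$, and then covering intermediate times by iterating the semigroup identity $H_t^{\ab}=\int H_{t/2}^{\ab}H_{t/2}^{\ab}\,d\mu_{\ab}$. Likewise for $t\ge 1$: there $\H_t^{\ab}$ and the correction are both of size $e^{t|\l|/2}$ and cancel to leading order, so $H_t^{\ab}\simeq e^{-t|\l|/2}$ cannot be read off from the two-sided bound on $\H_t^{\ab}$; the paper extracts it from the series \eqref{LTdec} together with the polynomial bound \eqref{Jpolb}. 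Your ``short computation'' is exactly where this constant-level cancellation must be controlled.
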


To prove this we will need some technical results, one of which is Lemma \ref{lem:intest} (a).
Note that this lemma remains true if the integration is restricted to the subinterval
$(1/2,1]$. This follows from the structure of $d\Pi_{\nu}$ and the fact that the integrand is 
positive and increasing. 

\begin{lem}\label{lem:abcdest}
Let $\tau>0$ be fixed. Then
$$ 
\frac{1}{a^{\tau}} - \frac{1}{b^{\tau}} - \frac{1}{c^{\tau}} + \frac{1}{d^{\tau}} \gtrsim
\frac{(b \wedge c - a)^2 \wedge a^2}{a^{\tau+2}},
$$
uniformly in $0<a \le b,c \le d$ satisfying $a+d=b+c$.
\end{lem}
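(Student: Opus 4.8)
The plan is to reduce the inequality to a one–variable calculus fact. Writing $f(x)=x^{-\tau}$, which is strictly decreasing and strictly convex on $(0,\infty)$, the left-hand side is
\[
\Delta := f(a)-f(b)-f(c)+f(d),
\]
and the constraint $a+d=b+c$ with $a\le b,c\le d$ means that the pair $\{a,d\}$ is obtained from $\{b,c\}$ by spreading out around the common midpoint $m=(a+d)/2=(b+c)/2$. So, without loss of generality assume $b\le c$, set $h=b-a=d-c\ge 0$ and $\delta=c-b\ge 0$; then $a=b-h$, $d=c+h$, and $\Delta=\big(f(b-h)-f(b)\big)-\big(f(c)-f(c+h)\big)$. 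The quantity $b\wedge c-a=b-a=h$, so the claim becomes $\Delta\gtrsim (h^2\wedge a^2)/a^{\tau+2}$, uniformly in the admissible range.

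First I would express $\Delta$ as an integral:
\[
\Delta=\int_0^h\!\big(f'(c+s-h-\delta+\delta)\ \text{-type terms}\big)\,ds
\]
— more cleanly, $\Delta=\int_{b-h}^{b}\!\big(-f'(s)\big)\,ds-\int_{c}^{c+h}\!\big(-f'(s)\big)\,ds=\int_0^h\big(|f'(a+s)|-|f'(c+s)|\big)\,ds$, and since $a+s\le c+s$ and $|f'|=\tau x^{-\tau-1}$ is decreasing, the integrand is nonnegative, giving $\Delta\ge 0$ for free. To get the quantitative lower bound I would then write $|f'(a+s)|-|f'(c+s)|=\int_{a+s}^{c+s}|f''(r)|\,dr=\tau(\tau+1)\int_{a+s}^{c+s} r^{-\tau-2}\,dr$, and bound $r^{-\tau-2}\ge (c+s)^{-\tau-2}\ge (2d)^{-\tau-2}$ is too lossy; instead I keep $r\le c+s\le d$ and use $r\ge a+s$, so $|f'(a+s)|-|f'(c+s)|\ge \tau(\tau+1)\,\delta\,(c+s)^{-\tau-2}$. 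Hence $\Delta\gtrsim \delta\int_0^h (c+s)^{-\tau-2}ds\gtrsim \delta\, h\, c^{-\tau-2}$ when $h\lesssim c$, and in general $\Delta\gtrsim \delta\,(c^{-\tau-1}-(c+h)^{-\tau-1})$. This handles the regime where $\delta$ is not too small, but when $\delta=0$ (i.e. $b=c$) this bound is vacuous, so a second mechanism is needed.

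For the case $\delta$ small (in particular $b=c$), I would instead exploit the strict convexity of $f$ directly: with $b=c=m$, $\Delta=f(m-h)+f(m+h)-2f(m)=\int_0^h\!\int_0^h f''(m-h+s+s')\,ds\,ds'$ type expression, or more simply $\Delta=\int_0^h (h-s)\big(f''(m-s)+f''(m+s)\big)\,ds\ge \int_0^{h}(h-s)f''(m+s)\,ds$. Since $m+s\le m+h=d$ and $f''(r)=\tau(\tau+1)r^{-\tau-2}$, I bound $f''(m+s)\ge \tau(\tau+1)d^{-\tau-2}$; but $d=a+h$, so if $h\le a$ then $d\le 2a$ and $\Delta\gtrsim h^2 a^{-\tau-2}$, while if $h\ge a$ then restricting $s$ to $[0,a]$ gives $\Delta\gtrsim \int_0^a (h-s)(m+s)^{-\tau-2}ds\gtrsim a\cdot h\cdot$(something $\gtrsim a^{-\tau-1}$)$\gtrsim a\cdot a^{-\tau-1}\cdot a = a^{-\tau+1}$... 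I need to be careful: the target when $h\ge a$ is $a^2\cdot a^{-\tau-2}=a^{-\tau}$, so I'd take $s\in[0,a]$, $m+s\le m+a\le$ (here $m\ge a$ and $m=a+h-\text{stuff}$, bounded by $d\le 2a+\ldots$)—this is where I must track that $m\le d$ and, crucially, $d\lesssim$ a bounded multiple of $a$ fails in general, so for $h\ge a$ I'd instead note $\Delta$ is bounded below by a positive constant times $f(2a)$-ish after observing that shrinking $h$ down to $a$ only decreases $\Delta$ (by monotonicity in $h$ of each difference), reducing to the $h\simeq a$ subcase.

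The main obstacle, then, is organizing these two mechanisms into a clean uniform argument covering both $h\lesssim a$ and $h\gtrsim a$ and interpolating between "$\delta$ large" and "$\delta=0$". I expect the cleanest route is: (1) reduce to $b\le c$ and parametrize by $a$, $h=b-a=d-c$, $\delta=c-b$; (2) prove monotonicity — $\Delta$ is nondecreasing in $h$ for fixed $a,\delta$ — so it suffices to treat $h\le a$ (as for larger $h$ one replaces $h$ by $a$, only shrinking both sides since $(h^2\wedge a^2)=a^2$ there); (3) in the regime $h\le a$, use the exact second-difference representation $\Delta=\int_0^h\!\int_{a+s}^{c+s}\!|f''| + \text{(pure convexity term)}$ and bound every variable appearing by a fixed multiple of $a$ (legitimate since now $b,c,d\le 3a$), obtaining $\Delta\gtrsim h^2 a^{-\tau-2}=(h^2\wedge a^2)a^{-\tau-2}$ as required. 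Verifying the monotonicity in step (2) and the "all variables $\simeq a$" bookkeeping in step (3) are the only genuinely delicate points; everything else is routine.
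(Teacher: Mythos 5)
Your ingredients are essentially the right ones, but the three-step plan you settle on does not close. The problem is step (3): knowing $h=b-a\le a$ does \emph{not} bound $c$ and $d$ by a fixed multiple of $a$, because the spread $\delta=c-b$ is completely unconstrained; e.g.\ $a=1$, $b=3/2$, $c=N$, $d=N+1/2$ is admissible for every $N\ge 1$ and lies in your ``$h\le a$'' regime, yet $c,d\to\infty$. In that regime your ``mechanism 1'' bound $\Delta\gtrsim \delta\,h\,c^{-\tau-2}$ (obtained by minorizing $|f''|$ on $[a+s,c+s]$ by its value at the right endpoint) tends to $0$ as $N\to\infty$, while the target $(h^2\wedge a^2)a^{-\tau-2}$ stays bounded below; so neither of your two mechanisms, as set up, actually covers large $\delta$. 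Your monotonicity in $h$ (step (2)) is correct, but it does not help: after replacing $h$ by $a$, the points $c$ and $d$ are still of size $\delta$.

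The missing idea is a \emph{second} monotonicity, in $\delta$: from your own representation $\Delta=\int_0^h\big(|f'(a+s)|-|f'(c+s)|\big)\,ds$, decreasing $c$ (and hence $d=c+h$) increases $|f'(c+s)|$ and so decreases $\Delta$, while the right-hand side of the lemma does not depend on $\delta$ at all. Hence it suffices to prove the inequality for $\delta=0$, i.e.\ $b=c$ and $d=2b-a$. This is exactly the first step of the paper's proof, phrased there as: $s\mapsto-(c+s)^{-\tau}+(d+s)^{-\tau}$ is increasing, so one may slide $c$ and $d$ down together until $c=b$. Once $b=c$, all four points lie in $[a,a+2h]$ and your second-difference computation (or the paper's one-line mean value argument $f(a)-2f(a+h)+f(a+2h)=f''(\xi)h^2$, with the case split $h\le Ca$ versus $h>Ca$) finishes the job. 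With this single reduction inserted before your steps (2)--(3), your argument becomes essentially the paper's proof; without it, step (3) as written is false.
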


\begin{proof}
We can assume that $b\le c$. Then the right-hand side is independent of $c$ and $d$. In the left-hand side, 
we therefore replace $c$ and $d$ by $c+s$ and $d+s$, respectively, where $s\ge b-c$. By differentiating, we 
see that the function $s \mapsto -(c+s)^{-\tau} + (d+s)^{-\tau}$ is increasing. As a result, we need only 
consider the extreme case $s=b-c$, which means proving the lemma for $b=c$. 

Writing $h=b-a$, and letting $f(x)=x^{-\tau}$, the left-hand side is now the second difference 
$f(a)-2f(a+h)+f(a+2h)$, which equals $f''(\xi)h^2$ for some $\xi \in (a,a+2h)$. Now if $h>Ca$ for some large 
$C=C(\tau)$, the inequality of the lemma is trivial, since the term $a^{-\tau}$ will dominate in the 
left-hand side. But if $h\le Ca$, we have $f''(\xi) \simeq a^{-\tau-2}$, and the conclusion follows again.
\end{proof}

Let $\sigma > 1$ be fixed. Then one easily verifies that
\begin{equation} \label{lem62}
\big| x^{-\sigma}-y^{-\sigma}\big| \simeq \frac{|x-y|}{(x\vee y)(x \wedge y)^{\sigma}}, \qquad x,y>0.
\end{equation}

\begin{proof}[{Proof of Theorem \ref{thm:JPsharp}}]
We first prove the estimates for $\H_t^{\ab}(\t,\vp)$. Among the four ranges of the type parameters
distinguished in Proposition \ref{prop:JP}, it is enough to consider only two. Indeed, when
$\ab \ge -1/2$ the desired bounds are contained in \cite[Theorem 5.2]{NoSj2}, and the cases
$\b < -1/2 \le \a$ and $\a < -1/2 \le \b$ are essentially the same.
In what follows we denote for $t>0$ and $\t,\vp \in [0,\pi]$
$$
X := \frac{\st \svp}{\cosh\frac{t}2-\ct \cvp}, \qquad
Y := \frac{\ct \cvp}{\cosh\frac{t}2-\st \svp},
$$
and
$$
Z:= \frac{\sinh\frac{t}2}{\big(\cosh\frac{t}2-\ct \cvp\big)^{\a+1/2} 
	\big(\cosh\frac{t}2-\st \svp\big)^{\b+1/2}
	\big(\cosh\frac{t}2- \st \svp - \ct \cvp\big)}.
$$
Notice that $0 \le X,Y < 1$ and that $Z$ is comparable, uniformly in $0 < t \le 1$ and $\t,\vp \in [0,\pi]$,
with the expression describing the short-time behavior in Theorem \ref{thm:JPsharp}; see the proof
of \cite[Theorem 5.2]{NoSj2}. Moreover, $Z$ has the same long-time behavior as that
asserted for $\H_t^{\ab}(\t,\vp)$.
Thus that part of the statement of Theorem \ref{thm:JPsharp} which deals with $\H_t^{\ab}(\t,\vp)$ can be 
written simply as 
\begin{equation}\label{HZcomp}
\H_t^{\ab}(\t,\vp) \simeq Z, \qquad t>0, \quad \t,\vp \in [0,\pi].
\end{equation}

\noindent{\bf Case 1:} ${-1<\a<-1/2\le \b}.$ By Proposition \ref{prop:JP},
\begin{align*}
\H_t^{\ab}(\t,\vp) & = \iint -\partial_u \Puv \, \oa \, \pib \\
	& \quad + \iint \Puv \, \piu \, \pib \\
	&  \equiv I_1 + I_2.
\end{align*}
One finds that
the integral $I_1$ is dominated (up to a multiplicative constant) by its restriction to the subsquare
$(1/2,1]^2$ and that the essential contribution to $I_2$ comes from integrating over $(1/2,1]^2$. 
In view of Lemma \ref{lem:ximes}, the measures $|\Pi_{\a}(u)|\,du$ and $d\Pi_{\a+1}$ are comparable on
$(1/2,1]$, and we infer that
\begin{align*}
I_1 & \lesssim \sinh\frac{t}2 \st \svp \iint \frac{d\Pi_{\a+1}(u)\, \pib}{\big( \cosh\frac{t}2
	- u \st \svp - v \ct \cvp \big)^{\a+\b+3}}, \\
I_2 & \simeq \sinh\frac{t}2 \int \frac{\pib}{\big( \cosh\frac{t}2
	- \st \svp - v \ct \cvp \big)^{\a+\b+2}},
\end{align*}
uniformly in $t>0$ and $\t,\vp \in [0,\pi]$. Applying now Lemma \ref{lem:intest} (a) to $I_1$ twice,
first to the integral against $d\Pi_{\b}$, with the parameters $\nu=\b$, $\kappa=0$,
$\gamma = \a+\b+3$, $A=\cosh\frac{t}2-u\st \svp$, $B= \ct \cvp$, and then to the resulting
integral against $d\Pi_{\a+1}$, with the parameters $\nu=\a+1$, $\kappa=\b+1/2$, $\gamma = \a + 5/2$,
$D=\cosh\frac{t}2$, $A = \cosh\frac{t}2-\ct \cvp$, $B=\st \svp$, we arrive at the bound
$$
I_1 \lesssim X Z.
$$
Applying once again Lemma \ref{lem:intest} (a), this time to $I_2$ and with the parameters 
$\nu = \b$, $\kappa = 0$, $\gamma = \a+\b+2$, $A = \cosh\frac{t}2-\st \svp$, $B=\ct \cvp$, we get
$$
I_2 \simeq (1-X)^{-\a-1/2} Z.
$$

Estimating $I_1$ from below is slightly more subtle. Notice that
$$
I_1 = \sum_{\eta =\pm 1} \, \iint_{(0,1]^2} \big( \partial_u \Psi^{\ab}(t,\t,\vp,u,\eta v)
	- \partial_u \Psi^{\ab}(t,\t,\vp,-u,\eta v) \big) \, |\Pi_{\a}(u)|\, du\, \pib;
$$
here the integrand in each double integral is nonnegative, and the one corresponding 
to $\eta=1$ is dominating.
Thus restricting the set of integration to $(1/2,1]^2$ and making use of Lemma \ref{lem:ximes}, we write
\begin{align*}
I_1 & \gtrsim \iint_{(1/2,1]^2} \big( \partial_u \Psi^{\ab}(t,\t,\vp,u,v)
	- \partial_u \Psi^{\ab}(t,\t,\vp,-u,v) \big) \, d\Pi_{\a+1}(u)\, \pib \\
	& \simeq \sinh\frac{t}2 \st \svp \iint_{(1/2,1]^2} \Bigg[ 
	\frac{1}{\big(\cosh\frac{t}2-u\st \svp - v\ct \cvp\big)^{\a+\b+3}} \\
	& \quad 
	- \frac{1}{\big(\cosh\frac{t}2+u\st \svp - v\ct \cvp\big)^{\a+\b+3}}\Bigg] \, d\Pi_{\a+1}(u)\, \pib.
\end{align*}
Applying \eqref{lem62} to the expression in square brackets above, we get
\begin{align*}
I_1 & \gtrsim \iint_{(1/2,1]^2} \frac{\sinh\frac{t}2 (\st \svp)^2 \, u \, d\Pi_{\a+1}(u)\, \pib}
	{\big(\cosh\frac{t}2+u\st \svp - v\ct \cvp\big)\big(\cosh\frac{t}2-u\st \svp - v\ct \cvp\big)^{\a+\b+3}}\\
	& \gtrsim \iint_{(1/2,1]^2} \frac{\sinh\frac{t}2 (\st \svp)^2 \, d\Pi_{\a+1}(u)\, \pib}
	{\big(\cosh\frac{t}2+\st \svp - v\ct \cvp\big)\big(\cosh\frac{t}2-u\st \svp - v\ct \cvp\big)^{\a+\b+3}}.
\end{align*}
The last integral is comparable with an analogous integral over the larger square $[-1,1]^2$,
see the comment following Theorem \ref{thm:JPsharp}. 
Using now Lemma \ref{lem:intest} (a) twice, first for the integral against $d\Pi_{\b}$
(with the parameters $\nu=\b$, $\kappa = 1$, $\gamma = \a+\b+3$, $D = \cosh\frac{t}2+\st \svp$,
$A=\cosh\frac{t}2-u\st \svp$, $B = \ct \cvp$) and then with the resulting integral against
$d\Pi_{\a+1}$ (with $\nu=\a+1$, $\kappa = \b+1/2$, $\gamma = \a + 5/2$, $D=\cosh\frac{t}2$,
$A=\cosh\frac{t}2-\ct \cvp$, $B=\st \svp$), we arrive at the bound
$$
I_1 \gtrsim \frac{X^2}{X+1}\, Z \simeq X^2 Z.
$$

Summing up, we have proved that
$$
X^2 Z + (1-X)^{-\a-1/2} Z \lesssim \H_t^{\ab}(\t,\vp) \lesssim XZ + (1-X)^{-\a-1/2} Z,
$$
uniformly in $t>0$ and $\t,\vp \in [0,\pi]$, and \eqref{HZcomp} follows.

\noindent{\bf Case 2:} ${-1<\a,\b<-1/2}.$ In view of Proposition \ref{prop:JP}, 
\begin{align*}
\H_t^{\ab}(\t,\vp) & = \iint \partial_{u} \partial_{v} \Puv \, \oa \, \Pi_{\b}(v)\, dv \\  
& \quad + \iint - \partial_{u}  \Puv \, \oa \, \piv\\ 
& \quad + \iint - \partial_{v}  \Puv \, \piu \, \Pi_{\b}(v)\, dv\\
& \quad + \iint \Puv \, \piu \, \piv\\
& \equiv J_1 + J_2 + J_3 + J_4.
\end{align*}
Clearly, the main contribution to $J_4$ comes from the point $(u,v)=(1,1)$, and so 
$$
J_4 \simeq \Psi^{\alpha,\beta}(t,\theta,\varphi,1,1)
\simeq (1-X)^{-\a-1/2} (1-Y)^{-\b-1/2} Z
\le Z.
$$
To bound the remaining integrals from above, we proceed as in Case 1, obtaining
\begin{align*}
J_1 & \lesssim \iint \partial_{u} \partial_{v} \Puv \, d\Pi_{\a+1}(u)\, d\Pi_{\b+1}(v), \\
J_2 & \lesssim \int \partial_{u} \Psi^{\ab}(t,\t,\vp,u,1) \, d\Pi_{\a+1}(u), \\
J_3 & \lesssim \int \partial_{v} \Psi^{\ab}(t,\t,\vp,1,v) \, d\Pi_{\b+1}(v).
\end{align*}
Then applying repeatedly Lemma \ref{lem:intest} (a) with suitably chosen parameters, we get
$$
J_1 \lesssim XYZ \le Z, \qquad J_2 \lesssim X(1-Y)^{-\b-1/2}Z \le Z, 
\qquad J_3 \lesssim (1-X)^{-\a-1/2}YZ \le Z.
$$

To estimate $J_2$ and $J_3$ from below, we use the same trick as for $I_1$ in Case 1. 
By means of Lemma \ref{lem:ximes} and \eqref{lem62}, we can write
$$
J_2 \gtrsim \frac{\sinh\frac{t}2 \big(\st \svp\big)^2}{\cosh\frac{t}2 + \st \svp - \ct \cvp} \int _{(1/2,1]}
	\frac{d\Pi_{\a+1}(u)}{\big(\cosh\frac{t}2-u\st \svp - \ct \cvp\big)^{\a+\b+3}}.
$$
Then Lemma \ref{lem:intest} (a) shows that
$$
J_2 \gtrsim \frac{X^2}{X+1} (1-Y)^{-\b-1/2} Z \simeq X^2 (1-Y)^{-\b-1/2} Z.
$$
The case of $J_3$ is parallel, we have
$$
J_3 \gtrsim (1-X)^{-\a-1/2} \frac{Y^2}{Y+1} Z \simeq (1-X)^{-\a-1/2} Y^2 Z.
$$

Finally, we focus on the more delicate integral $J_1$. Observe that
$$
J_1 = \iint_{(0,1]^2} \sum_{\xi,\eta = \pm 1} \xi \eta \, \partial_u \partial_v 
	\Psi^{\ab}(t,\t,\vp,\xi u, \eta v) \,
	|\Pi_{\a}(u)|\, du\, |\Pi_{\b}(v)|\, dv.
$$
Restricting here the region of integration (the integrand is nonnegative, as we shall see in a moment)
and using Lemma \ref{lem:ximes}, we conclude
$$
J_1 \gtrsim \sinh\frac{t}2 \st \svp \ct \cvp \iint_{(1/2,1]^2} \bigg(\frac{1}{a^{\tau}}
	- \frac{1}{b^{\tau}} - \frac{1}{c^{\tau}} + \frac{1}{d^{\tau}} \bigg) \, d\Pi_{\a+1}(u)\, d\Pi_{\b+1}(v),
$$
where $\tau=\a+\b+4$, $a = \cosh\frac{t}2-u\st \svp - v\ct \cvp$, 
$b = \cosh\frac{t}2-u\st \svp + v\ct \cvp$,
$c = \cosh\frac{t}2+u\st \svp - v\ct \cvp$,
$d = \cosh\frac{t}2+u\st \svp + v\ct \cvp$.
Applying now Lemma \ref{lem:abcdest}, we get
$$
J_1 \gtrsim \sinh\frac{t}2 \st \svp \ct \cvp \iint_{(1/2,1]^2}
	\frac{(b \wedge c - a)^2 \wedge a^2}{a^{\a+\b+6}} \, d\Pi_{\a+1}(u)\, d\Pi_{\b+1}(v).
$$
Since 
$$
b \wedge c -a = 2 u \st \svp \wedge 2 v \ct \cvp \ge \st \svp \wedge \ct \cvp, \qquad u,v \ge 1/2,
$$
we can write
\begin{align*}
J_1 & \gtrsim \sinh\frac{t}2 \st \svp \ct \cvp \iint_{(1/2,1]^2} \frac{d\Pi_{\a+1}(u)\, d\Pi_{\b+1}(v)}
	{\big(\cosh\frac{t}2-u\st \svp - v\ct \cvp\big)^{\a+\b+6}} \\
	& \quad \times\bigg[ \Big(\cosh\frac{t}2-\st\svp - \ct \cvp \Big) \wedge
	\st \svp \wedge \ct \cvp \bigg]^2.
\end{align*}
Combining this with Lemma \ref{lem:intest} (a), we see that
$$
J_1 \gtrsim XY \bigg[1 \wedge \Big(\frac{X}{1-X}\Big)^2 \wedge \Big(\frac{Y}{1-Y}\Big)^2 \bigg]Z
\ge (X \wedge Y)^4 Z.
$$

Altogether, the above considerations justify the estimates
\begin{align*}
& \Big( (X\wedge Y)^4 + X^2(1-Y)^{-\b-1/2}  + (1-X)^{-\a-1/2}Y^2  
	+ (1-X)^{-\a-1/2}(1-Y)^{-\b-1/2}\Big) Z \\
& \qquad \lesssim  \H^{\ab}_t(\t,\vp) \lesssim Z,
\end{align*}
which hold uniformly in $t>0$ and $\t,\vp \in [0,\pi]$. From this, \eqref{HZcomp} follows.

We pass to the Jacobi-Poisson kernel $H_t^{\ab}(\t,\vp)$. Here we can assume that 
$\l:=\a+\b +1< 0$, since otherwise
the kernels $\H_t^{\ab}(\t,\vp)$ and $H_t^{\ab}(\t,\vp)$ coincide. Then
$$
H_t^{\ab}(\t,\vp) = \H_t^{\ab}(\t,\vp) + 2^{\l + 1} c_{\ab}\, \sinh\frac{\l t}2.
$$
The second term here is negative for $t>0$, so $H_t^{\ab}(\t,\vp) < \H_t^{\ab}(\t,\vp)$. Taking \eqref{HZcomp}
into account, we obtain the
short-time upper bound for $H_t^{\ab}(\t,\vp)$. Thus what remains to show is the lower bound and the 
long-time upper bound for $H_t^{\ab}(\t,\vp)$. 

We first claim that the lower short-time bound holds provided that $t>0$ is small enough. 
In view of the already justified estimates for $\H_t^{\ab}(\t,\vp)$, this will follow once we check that
$$
-2^{\l + 1} c_{\ab}\, \sinh \frac{\l t}{2} 
\le c \, \H_t^{\ab}(\t,\vp), \qquad
	0< t \le T_0,
$$
for some $T_0>0$ and some $c<1$. Notice that the hypergeometric series defining $F_4$ in \eqref{HF4} has 
nonnegative terms and that the zero-order term is $1$. Thus for $t>0$ and $\t,\vp \in [0,\pi]$
$$
\bigg(\frac{2}{e}\bigg)^{\l+1}\H_t^{\ab}(\t,\vp) + 2^{\l + 1} c_{\ab}\, \sinh \frac{\l t}{2} \ge
\bigg(\frac{2}{e}\bigg)^{\l+1} c_{\ab} \Bigg( \frac{\sinh\frac{t}2}{\big(\cosh\frac{t}2\big)^{\l+1}}
	+ e^{\l+1}\sinh \frac{\l t}2\Bigg).
$$
Now it suffices to ensure that, given $\l \in (-1,0)$, the function
$$
h(s) = \frac{\sinh s}{(\cosh s)^{\l+1}} + e^{\l+1}\sinh(\l s)
$$
satisfies $h(0)=0$ and $h'(0)>0$. This, however, is straightforward. The claim follows.

Next we show that the upper long-time bound for $H_t^{\ab}(\t,\vp)$ holds for $t \ge 1$
and that the lower counterpart is also true provided that $t \ge T_1$ with $T_1$ chosen large enough.
From the series representation,
$$
H_t^{\ab}(\t,\vp) = 2^{\l} c_{\ab}\, e^{-t|\l| /2} + \sum_{n=1}^{\infty} e^{-t(n+\l /2)}
	\P_n^{\ab}(\t)\P_n^{\ab}(\vp).
$$
The last series can be controlled by means of the bound $|\P_n^{\ab}(\t)| \lesssim n$, $n \ge 1$,
see \eqref{Jpolb}. More precisely, we have 
$$
\bigg| \sum_{n=1}^{\infty} e^{-t(n+\l /2)} \P_n^{\ab}(\t)\P_n^{\ab}(\vp)\bigg| \lesssim
	e^{-t/2} \sum_{n=1}^{\infty} n^2 e^{-t\left(n+\frac{\a+\b}2\right)} \lesssim e^{-t/2}, \qquad t \ge 1.
$$
Since $\a+\b > -2$ and $|\l|<1$, the conclusion follows.

To deal finally with the lower bound in the range $T_0 \le t \le T_1$, we use the semigroup property of 
$H_t^{\ab}$. For $T_0 \le t \le 2T_0$, we have
\[
H_t^{\ab}(\t,\vp) = 
\int_0^\pi H_{t/2}^{\ab}(\t,\psi) H_{t/2}^{\ab}(\psi,\vp) \, d\mu_{\ab}(\psi).
\]
Since $H_{t/2}^{\ab}(\t,\vp) \gtrsim 1$ in $[T_0,2T_0]\times [0,\pi]^2$ 
by the above, 
we conclude that also $H_t^{\ab}(\t,\vp)$ has a positive lower bound in the same set.
In a finite number of similar steps, we will reach $t=T_1$.

The proof of Theorem \ref{thm:JPsharp} is complete.
\end{proof}

\end{document}